\newcommand{\half}{\frac{1}{2}}
\newcommand{\ZZ}{\mathbb{Z}}
\newcommand{\into}{\hookrightarrow}
\newcommand{\Spec}{\mathrm{Spec}}
\newcommand{\naive}{\mathrm{Naive}}
\newcommand{\mc}{\mathcal}
\newcommand{\mono}{\rightarrowtail}
\newcommand{\iso}{\xrightarrow{\sim}}
\DeclareMathOperator*{\colim}{colim}
\DeclareMathOperator*{\hocolim}{hocolim}
\theoremstyle{plain}
\newtheorem*{theorem*}{Theorem}
\newtheorem{theorem}{Theorem}[section]
\newtheorem{corollary}[theorem]{Corollary}
\newtheorem{lemma}[theorem]{Lemma}
\theoremstyle{definition}
\newtheorem{definition}[theorem]{Definition}
\newtheorem{remark}[theorem]{Remark}
\numberwithin{equation}{section}
\title{Construction of the motivic cellular spectrum $\mathbf{KO}^{geo}$ over $Spec(\ZZ)$}
\author{K.~Arun Kumar}
\affil{\small Department of Mathematics, IISER Tirupati, India}
\date{}
\begin{document}
\maketitle
	\begin{abstract}
		We construct a periodic motivic spectrum over $Spec(\ZZ)$ which when pulled back to any scheme $S$ with $\half\in\Gamma(S,\mc{O}_S)$ is the $HP^1-$spectrum constructed by Panin and Walter. This spectrum $\mathbf{KO}^{geo}$ is constructed using closed subschemes of the Grassmannians $Gr(r,n)$. Using this we show that $\mathbf{KO}^{geo}$ is cellular. 
	\end{abstract}
	\section{Introduction}
	Throughout this paper all schemes we consider are separated and quasi-compact. Given any scheme $S$, we let $Sch_S$ be a small category equivalent to the category of $S$-schemes of finite type. After fixing $Sch_S$, the small categories $Sch^{aff}_S$, $Sm_S$ and $Sm^{aff}_S$ are the full subcategories of $Sch_S$ generated by (globally) affine, smooth and smooth affine $S$-schemes respectively. Let \emph{ind-scheme} refer to any presheaf on the categroy $Sm_S$ which is a directed colimit of representable presheaves. \par 
	Panin and Walter in \cite{PW18} construct an $HP^1$-spectrum $\mathbf{BO}$ over any regular Noetherian finite-dimensional scheme $S$ containing $\half$ and show that is it isomorphic to Hornbostel's hermitian $K$-theory spectrum $\mathbf{KO}$ in the stable motivic homotopy category $SH(S)$. Here $HP^1\simeq S^{4,2}$ is the quaternionic projective line. The main advantage of their construction is that $\mathbf{BO}_{2i}=\ZZ\times RGr$ and $\mathbf{BO}_{2i+1}=\ZZ\times HGr$ are ind-schemes $\ZZ\times RGr=\colim_{n} RGr(n,2n)$ and $\ZZ\times HGr=\colim_{n} HGr(2n,4n)$. Here $RGr(r,n)$ and $HGr(2r,2n)$ are open subschemes of a Grassmannian scheme of appropriate degree. This makes it easier to prove many properties. For example, R{\"o}ndigs and {\O}stv{\ae}r  in \cite{RO16} use this model to compute the slice spectral sequence of hermitian K-theory.\par 
	In this paper we remove the $\half\in\Gamma(S,\mc{O}_S)$ condition and extend the construction of $\mathbf{BO}$ to arbitrary schemes. We will denote this spectrum by $\mathbf{KO}^{geo}$ to be more in line with standard notations. Firstly for any $S$-scheme $X$, we define $KSp^\perp(X)$ to be the K-theory space associated to the symmetric monoidal category of unimodular alternating forms over $X$. This gives us a functor $KSp^\perp:Sm_S^{op}\to \mathbf{sSet}$ (cf.~Def.~\ref{def:KSp_perp}). Using the presheaf $KSp^\perp$ we can extend \cite[Thm.~8.2]{PW18} to get isomorphisms 
	\[ \ZZ\times HGr\cong \ZZ\times BSp_\infty\cong  R\Omega^1_sB(\coprod_n BSp_n) \cong KSp^\perp \]
	in the unstable motivic homotopy category $H_\bullet(S)$ over any scheme $S$. As the $\mathbb{A}^1$-invariance of orthogonal and symplectic K-theories are only known when $2$ is invertible, we still cannot extend the representability of hermitian K-theory \cite[Thm.~5.1]{PW18} to $\Spec(\ZZ)$. We however prove a weaker result for $KSp^\perp$ in section \ref{sec:unstable} using results from \cite{AHW18}.
	\begin{theorem*}
		Let $S$ be ind-smooth over a Dedekind ring with perfect residue fields. Then, for any affine $\Spec(R)\cong X\in Sm^{aff}_S$ there are isomorphisms
		\[[S^n\wedge X_+,\ZZ\times HGr]_{\mathbb{A}^1}\cong \pi_n Sing^{\mathbb{A}^1}(KSp^{\perp}(X)) \]
		for all $n\in \mathbb{N}$. In particular, this holds over $\Spec(\ZZ)$.
	\end{theorem*}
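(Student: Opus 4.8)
The plan is to obtain the statement from the affine representability technology of \cite{AHW18}, after first replacing $\ZZ\times HGr$ by $KSp^\perp$ using the unstable comparison recorded above. Indeed, the chain of isomorphisms $\ZZ\times HGr\cong\ZZ\times BSp_\infty\cong R\Omega^1_sB(\coprod_n BSp_n)\cong KSp^\perp$ in $H_\bullet(S)$ yields, for every smooth $X$ and every $n$, a bijection $[S^n\wedge X_+,\ZZ\times HGr]_{\mathbb{A}^1}\cong[S^n\wedge X_+,KSp^\perp]_{\mathbb{A}^1}$, so it suffices to produce an isomorphism $[S^n\wedge X_+,KSp^\perp]_{\mathbb{A}^1}\cong\pi_n Sing^{\mathbb{A}^1}(KSp^\perp)(X)$ for $X\in Sm^{aff}_S$.

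The general input I would quote from \cite{AHW18} concerns simplicial presheaves on $Sm^{aff}_S$ satisfying \emph{affine Nisnevich excision}, i.e.\ sending affine elementary distinguished squares to homotopy cartesian squares. For such a presheaf $\mathcal{F}$: (a) the affine Nisnevich Brown--Gersten property holds, so $\mathcal{F}(X)$ already computes Nisnevich-fibrant sections over affine $X$; and (b) when $S$ is ind-smooth over a Dedekind ring with perfect residue fields --- precisely our hypothesis, which is why $\Spec(\ZZ)$ qualifies --- the presheaf $Sing^{\mathbb{A}^1}\mathcal{F}$ again satisfies affine Nisnevich excision and is moreover $\mathbb{A}^1$-invariant on affines, hence $\mathbb{A}^1$-local there. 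Since $\mathcal{F}\to Sing^{\mathbb{A}^1}\mathcal{F}$ is always an $\mathbb{A}^1$-equivalence, combining (a) and (b) gives a weak equivalence $Sing^{\mathbb{A}^1}\mathcal{F}(X)\iso\mathrm{Map}_{\mathbb{A}^1}(X_+,\mathcal{F})$ for $X\in Sm^{aff}_S$, and taking homotopy groups yields $\pi_n Sing^{\mathbb{A}^1}\mathcal{F}(X)\cong[S^n\wedge X_+,\mathcal{F}]_{\mathbb{A}^1}$.

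By the previous two paragraphs everything reduces to checking that $\mathcal{F}=KSp^\perp$ satisfies affine Nisnevich excision; that is, that on an affine elementary distinguished square the induced square of symplectic $K$-theory spaces is homotopy cartesian. Over the regular base in question this is the Nisnevich descent property of symplectic $K$-theory: $KSp^\perp$ is then the zeroth space of the symplectic $K$-theory spectrum, which satisfies Nisnevich descent (the symplectic case, involving only unimodular alternating forms, behaving well integrally --- no $\half$ needed), and $\Omega^\infty$ preserves the relevant homotopy pullbacks. Alternatively one may pass to the sheaf-theoretic model $\ZZ\times\colim_m B_{Nis}Sp_{2m}$, where affine Nisnevich excision is transparent --- $B_{Nis}Sp_{2m}$ is the stack of rank-$2m$ symplectic bundles (here $Sp_{2m}$ being special enters), and filtered colimits together with the discrete $\ZZ$-factor preserve excision --- and then reconcile this model with $KSp^\perp$ on affines through the comparison maps of \cite{PW18} and part (b) above.

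The genuinely delicate point, I expect, is this last one: that the group completion built into the definition of $KSp^\perp$ does not spoil descent. Connective $K$-theory descent can fail in general, and here it is the regularity of the objects of $Sm^{aff}_S$ that saves the argument; if one prefers to argue through the Grassmannian or classifying-space models, one must make sure that all presheaves appearing in the comparison lie in the excisive class, so that $Sing^{\mathbb{A}^1}$ keeps them $\mathbb{A}^1$-local on affines and the comparison maps become objectwise equivalences there. Everything else is routine bookkeeping: compatibility of $\pi_n$, of $Sing^{\mathbb{A}^1}$, and of $[S^n\wedge X_+,-]_{\mathbb{A}^1}$ with the filtered colimit over $m$ (using that $S^n\wedge X_+$ is a compact object of $H_\bullet(S)$), and the harmless contribution of the $\ZZ$-factor to $\pi_0$.
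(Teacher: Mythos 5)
Your main line hinges on $KSp^\perp$ itself satisfying affine Nisnevich excision, which you try to supply by asserting Nisnevich descent for ``the symplectic $K$-theory spectrum \ldots behaving well integrally --- no $\half$ needed.'' That assertion is exactly what is \emph{not} available here. The paper makes no such claim; on the contrary it flags that $\mathbb{A}^1$-invariance of symplectic $K$-theory is only known with $2$ invertible, and affine Nisnevich excision for the \emph{connective group-completion space} $KSp^\perp$ over a general regular base is not something you can cite. Your own caveat (``connective $K$-theory descent can fail in general, \ldots\ it is the regularity \ldots\ that saves the argument'') names the problem but does not resolve it: regularity does not by itself give excision for the group-completed space, and the paper's proof is deliberately structured so as never to need it.

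Your ``Alternatively'' paragraph is much closer to the paper's actual argument, but you collapse its entire substance into ``reconcile this model with $KSp^\perp$ on affines through the comparison maps of \cite{PW18}'' and file that under routine bookkeeping. It is not routine --- it is the whole theorem. The reconciliation is carried out in three steps, none of which appear in your proposal: (i) on $\pi_0$ one needs Theorem~\ref{th:affKsp0}, which identifies $[X_+,\ZZ\times BSp_\infty]_{\mathbb{A}^1}$ with $KSp^\perp_0(X)$ for affine $X$ by combining \cite[Ex.~2.3.4]{AHW18} with the cofinality result Theorem~\ref{th:symp_subspace}; (ii) on higher homotopy groups one compares $BSp_\infty$ with $\colim_k B_{Nis}Sp_{2k}$ using the $\mathbb{A}^1$-naivety of $B_{Nis}Sp_{2k}$ (\cite[Thm.~4.1.2]{AHW18}) together with \cite[Prop.~4.1.16]{MV99}; and (iii) --- the crux --- one identifies $KSp^\perp_0(X)\times BSp_\infty(X)\to KSp^\perp(X)$ as a topological group completion and uses that $Sing^{\mathbb{A}^1}BSp_\infty(X)$ is a grouplike H-space (established in the proof of Theorem~\ref{iso_ksp}) to conclude that $Sing^{\mathbb{A}^1}$ turns this group completion into a levelwise weak equivalence. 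Step (iii) is precisely the ``delicate point'' you flag, and it is resolved not by regularity but by the grouplike structure after $Sing^{\mathbb{A}^1}$. Without (i)--(iii) supplied, the proposal has a genuine gap.
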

	 Stably, we are able to extend several results about $\mathbf{BO}$ to $\Spec(\ZZ)$. Collecting all the results from sections \ref{sec:geo_spec} and \ref{sec:prop} we get the following theorem.
	\begin{theorem*}
		For any scheme $S$, there exists a motivic cellular $HP^1$-spectrum \[\mathbf{KO}^{geo}_S=(\mathbf{KO}^{geo}_0,\mathbf{KO}^{geo}_1,\ldots)\in SH(S)_{HP^1}\cong SH(S)\] 
		such that,
		\begin{enumerate}
			\item $\mathbf{KO}^{geo}_{2n}\cong\ZZ\times RGr$ and $\mathbf{KO}^{geo}_{2n+1}\cong \ZZ\times HGr\cong KSp^\perp$ in $H_\bullet(S)$;
			\item $\Omega^2_{HP^1}\mathbf{KO}^{geo}\cong \mathbf{KO}^{geo}$ in $SH(S)_{HP^1}$ and hence $\Omega^4_{T}\mathbf{KO}^{geo}\cong \mathbf{KO}^{geo}$ as objects in $SH(S)$;
			\item for any morphism of schemes $f:S_1\to S_2$, there exists a canonical isomorphism $Lf^*\mathbf{KO}^{geo}_{S_2}\iso \mathbf{KO}^{geo}_{S_1}$ in $SH(S_1)$;
			\item if $f:S\to \Spec(\ZZ)$ is any scheme with $\half \in\Gamma(S,\mc{O}_S)$, $Lf^*\mathbf{KO}^{geo}$ is isomorphic to the motivic spectrum $\mathbf{BO}$ given in \cite{PW18}.
		\end{enumerate}
		In particular when $S$ is regular Noetherian of finite dimension with $\half\in\Gamma(S,\mc{O}_S)$, $\mathbf{KO}^{geo}$ represents hermitian K-theory.
	\end{theorem*}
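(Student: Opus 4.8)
The plan is to synthesize the constructions of Section~\ref{sec:geo_spec} with the properties established in Section~\ref{sec:prop}, treating $\Spec(\ZZ)$ as distinguished: since it is the terminal scheme, one builds $\mathbf{KO}^{geo}$ once over $\Spec(\ZZ)$ and sets $\mathbf{KO}^{geo}_S := L\pi_S^*\mathbf{KO}^{geo}$ for the structure map $\pi_S \colon S \to \Spec(\ZZ)$. Over $\Spec(\ZZ)$, the first step is to fix integral geometric models: realize $\ZZ \times RGr = \colim_n Z_n$ and $\ZZ \times HGr = \colim_n W_n$ with each $Z_n, W_n$ a \emph{closed} smooth subscheme of a Grassmannian $Gr(r_n, m_n)$, cut out by equations defined over $\ZZ$ that recover the open subschemes $RGr(n,2n)$, $HGr(2n,4n)$ of \cite{PW18} after inverting $2$. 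One sets $\mathbf{KO}^{geo}_{2n} = \ZZ \times RGr$ and $\mathbf{KO}^{geo}_{2n+1} = \ZZ \times HGr$ for all $n$, and constructs the bonding maps $HP^1 \wedge \mathbf{KO}^{geo}_k \to \mathbf{KO}^{geo}_{k+1}$ as morphisms of ind-schemes by transcribing the recipe of \cite[\S8]{PW18} (built from the tautological subbundles and the symplectic/orthogonal forms) over $\Spec(\ZZ)$; this produces an object of $SH(\Spec\ZZ)_{HP^1}$, and hence of $SH(\Spec\ZZ)$ through the equivalence $SH(S)_{HP^1} \cong SH(S)$ attached to $HP^1 \simeq S^{4,2} = T^{\wedge 2}$.

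Part~(3) is then almost formal: for $f \colon S_1 \to S_2$ one has $\pi_{S_2} \circ f = \pi_{S_1}$, so the pseudofunctoriality isomorphism $Lf^* \circ L\pi_{S_2}^* \iso L\pi_{S_1}^*$ supplies the canonical isomorphism $Lf^* \mathbf{KO}^{geo}_{S_2} \iso \mathbf{KO}^{geo}_{S_1}$, and its compatibility with composition is the coherence of $S \mapsto SH(S)$. For Part~(1) over a general $S$ one checks that $L\pi_S^*$, being a left adjoint, commutes with the filtered colimits defining $\ZZ \times RGr$ and $\ZZ \times HGr$ and with the base change of each $Z_n, W_n$, so that $L\pi_S^*(\ZZ \times RGr) \cong \ZZ \times RGr$ and likewise for $HGr$ in $H_\bullet(S)$; the identification $\ZZ \times HGr \cong KSp^\perp$ is the extension of \cite[Thm.~8.2]{PW18} recorded in the introduction.

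The heart of the matter is Part~(2). Its content is the $\Omega_{HP^1}$-spectrum condition: that the adjoint bonding maps $\mathbf{KO}^{geo}_k \to \Omega_{HP^1}\mathbf{KO}^{geo}_{k+1}$ are motivic weak equivalences; granting this, iteration gives $\mathbf{KO}^{geo}_k \simeq \Omega^2_{HP^1}\mathbf{KO}^{geo}_{k+2}$, and since the pre-spectrum is strictly $2$-periodic by construction ($\mathbf{KO}^{geo}_{k+2} = \mathbf{KO}^{geo}_k$ with the same bonding maps) this identifies $\Omega^2_{HP^1}\mathbf{KO}^{geo}$ with $\mathbf{KO}^{geo}$ in $SH(S)_{HP^1}$, whence $\Omega^4_T\mathbf{KO}^{geo} \cong \mathbf{KO}^{geo}$ in $SH(S)$ via $HP^1 \simeq T^{\wedge 2}$. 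Over a base containing $\half$, Panin--Walter obtain the $\Omega$-spectrum condition from the representability and Bott periodicity of hermitian $K$-theory, which are unavailable at the prime $2$; so the plan is to re-prove it over $\Spec(\ZZ)$ and transport to all $S$ by Part~(3) (legitimate since $\Omega_{HP^1} = (-) \wedge S^{-8,-4}$ is monoidal, hence commutes with $L\pi_S^*$). Over $\Spec(\ZZ)$ I would transcribe to the closed models Panin--Walter's \emph{geometric} deloopings relating $RGr$ and $HGr$, verifying that each morphism and $\mathbb{A}^1$-homotopy involved is defined integrally, and would certify the resulting maps to be weak equivalences Nisnevich-locally --- on smooth affine $X$ over $\Spec(\ZZ)$, where the unstable Theorem of the introduction replaces the missing representability by the identification with $Sing^{\mathbb{A}^1}KSp^\perp(X)$, reducing the claim to the $\mathbb{A}^1$-invariance of symplectic $K$-theory on smooth affines over a Dedekind base supplied by \cite{AHW18}. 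This integral re-proof, in particular arranging that everything needed can be routed through $KSp^\perp$ rather than through orthogonal or hermitian $K$-theory, is the step I expect to be the main obstacle.

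For Part~(4), a comparison lemma shows that over $\Spec(\ZZ[\half])$ the integral models $Z_n, W_n$ are $\mathbb{A}^1$-equivalent to $RGr(n,2n)$, $HGr(2n,4n)$ compatibly with the bonding maps, giving a levelwise and hence stable isomorphism $Lj^*\mathbf{KO}^{geo} \iso \mathbf{BO}$ over $\Spec(\ZZ[\half])$ for $j \colon \Spec(\ZZ[\half]) \to \Spec(\ZZ)$; since $\mathbf{BO}$ is itself stable under pullback along $\half$-invertible base changes, Part~(3) upgrades this to $Lf^*\mathbf{KO}^{geo} \cong \mathbf{BO}$ over every such $S$. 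Cellularity reduces to showing each $Z_n, W_n$ is a cellular motivic space, which I would establish via an affine cell decomposition arising from a $\mathbb{G}_m$-action (a motivic Bialynicki--Birula / Schubert-type stratification of these smooth projective subschemes of Grassmannians); then $\ZZ \times RGr$ and $\ZZ \times HGr$ are filtered colimits (and $\ZZ$-indexed coproducts) of cellular spaces, hence cellular, and since $HP^1$ is cellular and $\mathbf{KO}^{geo} \simeq \hocolim_n \Sigma^{-2n}_{HP^1}\Sigma^\infty_{HP^1}\mathbf{KO}^{geo}_{2n}$, the spectrum is cellular; $L\pi_S^*$ preserves cellularity, being a left adjoint that sends spheres to spheres. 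Finally, for $S$ regular Noetherian of finite Krull dimension with $\half \in \Gamma(S,\mc{O}_S)$, Part~(4) identifies $\mathbf{KO}^{geo}_S$ with $\mathbf{BO}_S$, which represents hermitian $K$-theory by \cite[Thm.~5.1]{PW18}.
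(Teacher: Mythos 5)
Your plan is sound on Parts (1), (3), and (4), and the global strategy of building once over $\Spec(\ZZ)$ and pulling back via Part (3) is essentially how the paper organizes things. But you have misread the content of Part~(2), and this is the central point on which your proposal diverges from — and would fail against — what the paper actually does.

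The claim $\Omega^2_{HP^1}\mathbf{KO}^{geo}\cong\mathbf{KO}^{geo}$ in $SH(S)_{HP^1}$ is \emph{not} the $\Omega$-spectrum condition. It is a purely formal statement about the stable object: since $\mathbf{KO}^{geo}$ is by construction a strictly $2$-periodic $HP^1$-prespectrum (levels $\ZZ\times RGr,\ \ZZ\times HGr,\ \ZZ\times RGr,\ldots$ with alternating bonding maps $g,f,g,f,\ldots$ that literally repeat), the shift $\mathbf{KO}^{geo}[-2]$ has the same underlying naive spectrum, and in the stable homotopy category $\Omega^2_{HP^1}(-)$ agrees with the shift by $-2$. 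The paper records this in a single line, ``periodicity follows by construction.'' In contrast, you interpret (2) as requiring the adjoint bonding maps $\mathbf{KO}^{geo}_k\to R\Omega_{HP^1}\mathbf{KO}^{geo}_{k+1}$ to be motivic weak equivalences. That is a much stronger statement, equivalent to a Thom/Bott isomorphism, and the paper explicitly disclaims it: with $2$ not invertible, $\mathbb{A}^1$-invariance of symplectic $K$-theory is unknown, Theorem~\ref{th:affKsp} only gives unstable representability for smooth \emph{affine} $X$ over a Dedekind base, and the paper states it does not know what cohomology theory $\mathbf{KO}^{geo}$ represents over $\Spec(\ZZ)$. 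So the route you sketch — re-proving the $\Omega$-spectrum property integrally through $KSp^\perp$ — is both more than what Part~(2) asserts and beyond the tools available; fortunately it is also unnecessary.

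Two secondary points. First, there is no need to invent new integral ``closed models'' $Z_n, W_n$ agreeing with $RGr, HGr$ only after inverting $2$: the Panin--Walter schemes $RGr(r,n)$ and $HGr(2r,2n)$ are already defined over $\Spec(\ZZ)$, and the paper uses them directly, defining the bonding maps $f_{2n}, g_n$ in Lemmas~\ref{lem:structure-hgr-rgr} and \ref{lem:structure-rgr-hgr} via tensor products of the tautological sub- and quotient-bundles. Second, for cellularity the paper does not use a Bialynicki--Birula/Schubert decomposition (which is not obviously available since $HGr(2r,2n)$ is an open subscheme of $Gr(2r,2n)$, not a projective variety); instead it runs an induction on $r$ and $n$ using the cofiber sequences $Th({U^{\oplus m}_{2r,2n}}_{|Y})\to Th(U^{\oplus m}_{2r,2n})\to Th(\pi^*_+U^{\oplus m+1}_{2r,2n-2})$ from \cite{PW10} together with the Thom-space invariance result of Theorem~\ref{thm:thom-iso}. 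Your $\mathbb{G}_m$-action route would need to be justified separately; the inductive Thom-space argument is the one that actually closes the gap.
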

	We do not know what cohomology theory $\mathbf{KO}^{geo}$ represents over $\Spec(\ZZ)$. The hope is that it represents some version of hermitian K-theory. In a recent paper Schlichting \cite{Sch19} introduced the notion of K-theory of forms which generalises the K-theory of spaces with duality. In this formalism $\mathbf{Symp}(X)$ becomes the category of quadratic spaces for a suitable choice of category with forms structure on vector bundles $Vect(X)$. If this theory satisfies Nisnevich excision and $\mathbb{A}^1$-invariance then $KSp$ will represent it in the unstable homotopy category. It is still unknown if this is true. However, a recent paper by Bachmann and Wickelgren (\cite{BW20}) suggests that there is a version of the hermitian K-theory ring spectrum which can be defined over arbitrary schemes (although it might not represent hermitian K-theory any more). It is unknown if this spectrum is stably equivalent to ours when $2$ is not invertible. 
	\section*{Acknowledgement}
		This paper is based on work done by me as part of my PhD and therefore it is adapted from my thesis \cite{kumar}. I would like to thank Universit\"at Osnabr\"uck for giving me the opportunity to do my PhD and supporting me as part of the graduate school ``DFG-GK1916 Combinatorial Structures in Geometry''. In particular I would like to thank my advisor Prof. Oliver R\"ondigs for guiding me throughout my PhD and for continuing to help me write this paper afterwards.  
	\section{Hermitian K-theory}\label{sec:HKT}
		Hermitian K-theory evolved out of the study of bilinear forms over rings and more generally schemes. Given a scheme $X$ and a quasicoherent $\mc{O}_X$-module $\mc{E}$, recall that a bilinear form on $\mc{E}$ is a morphism of $\mc{O}_X$-modules $\mc{E}\otimes\mc{E}\to \mc{O}_X$. We call a bilinear form \emph{unimodular} (sometimes referred to as non-degenerate) if the adjoint map $\mc{E}\to\mc{E}^* $ is an isomorphism. When $\mc{E}$ is isomorphic to a trivial vector bundle of rank $n$, each bilinear form can be represented by an element of $GL_n(\Gamma(X,\mc{O}_X))$. We will be interested in two classes of bilinear forms in particular. A \emph{symmetric bilinear form} is a bilinear form $\psi$ such that $\psi\tau=\psi$ where $\tau:\mc{E}\otimes\mc{E}\iso \mc{E}\otimes\mc{E}$ is the switch map, an \emph{alternating bilinear form} on $\mc{E}$ is a bilinear form  $\phi$, such that $\phi\circ\Delta=0$, where $\Delta$ is the diagonal map of sheaves $\Delta:\mc{E}_X\to\mc{E}\otimes\mc{E}$. We call a vector bundle equipped with a unimodular alternating form (resp. unimodular symmetric form) a symplectic space (resp. symmetric space). Over any scheme $X$, $H_+=(\mc{O}^{\oplus 2}_X, (\begin{smallmatrix}
		0 &1\\
		1 &0\\
		\end{smallmatrix}))$ and $H_-=(\mc{O}^{\oplus 2}_X,(\begin{smallmatrix} 
		0 &1\\
		-1 &0\\
		\end{smallmatrix}))$ are the hyperbolic symmetric and symplectic 
		spaces respectively. 
                Let $\mathbf{Symm}(X)$ and $\mathbf{Symp}(X)$ denote the categories of symmetric and symplectic spaces over a scheme $X$ respectively, where a morphism $f:(V,\phi)\to(W,\psi)$ is a morphism of vector bundles $f:V\to W$ such that $f^*\psi f=\phi$. The orthogonal sum $\perp$ turns $\mathbf{Symm}(X)$ and $\mathbf{Symp}(X)$ into (essentially small) symmetric monoidal categories. For objects $\mc{E}\in \mathbf{Symm}(X)$ and $\mc{F}\in \mathbf{Symp}(X)$ we denote their corresponding isomorphism groupoids by $O(\mc{E})$ and $Sp(\mc{F})$ respectively.
		\begin{definition}
			Let $R$ be a commutative ring.
			\begin{enumerate}
				\item The \textit{symplectic K-theory space} $KSp^\perp(R)$ is the space $K^{\perp}(i\mathbf{Symp}(\Spec(R)))$ and $KSp_n(R)= \pi_n KSp^\perp(R)$ are the symplectic K-groups.
			
				\item The \textit{orthogonal K-theory space} $KO^\perp(R)$ is the space $K^\perp(i\mathbf{Symm}(\Spec(R)))$ and $KO^\perp_n(R)=\pi_n KO^\perp(R)$ the orthogonal K-groups. 
			\end{enumerate}
		\end{definition}
		Here $K^\perp(-)$ is the K-theory space of a symmetric monoidal category \cite[Def.~4.3]{Wei13}. These were defined in \cite{Kar73} as ${}_1K^h(R)$ and ${}_{-1}K^h(R)$ respectively for rings where $2$ is invertible. The zeroth orthogonal K-group $KO^\perp_0(R)$ is equal to the classical Grothendieck-Witt group $GW(R)$ (called the \emph{Witt-Grothendieck group} in \cite[Def.~1.1]{Lam05}).\par  There exist monoidal functors 
		\[\coprod_n O(H^n_+)(R)\to \mathbf{Symm}(\Spec(R))\:\: \mathrm{and}\:\: \coprod_n Sp(H^n_-)(R)\to \mathbf{Symp}(\Spec(R)) \] 
		given on objects by $n\mapsto H^{n}_+$ and $n\mapsto H^{n}_-$ respectively. These then induce maps 
		\[\Omega_sB(\coprod_n BO(H^n_+))(R)\to KO^\perp(R)\] 
		\[\Omega_sB(\coprod_n BSp(H^n_-))(R)\to KSp^\perp(R)\] 
		of group completions. When $\half\in R$, \cite[Lem.~1.5]{Lam06} implies that $\coprod_n O(H^n_+)$ is a cofinal monoidal subcategory of $i\mathbf{Symm}(\Spec(R))$. We prove an analogous result for symplectic spaces below. 
        \begin{lemma}\label{lem:symplectic-cofinality}
        	Let $R$ be any ring. Every symplectic space over $R$ is isometric to a subspace of $H^n_-$ for some $n$.   
        \end{lemma}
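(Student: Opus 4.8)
The plan is to embed a symplectic space $(V,\phi)$ first into the hyperbolic symplectic space on $V\oplus V^*$, and then to enlarge $V$ to a free module so that the latter sits inside some $H^n_-$. First I would record the general observation that any morphism $f\colon(V,\phi)\to(W,\psi)$ in $\mathbf{Symp}(\Spec(R))$ is automatically a split monomorphism whose image is a direct summand of $W$: passing to adjoints, $f^*\psi f=\phi$ becomes $f^*\circ\psi^\flat\circ f=\phi^\flat$, and since $\phi$ is unimodular the map $(\phi^\flat)^{-1}\circ f^*\circ\psi^\flat$ is a left inverse of $f$. Consequently the image of such an $f$ is a genuine symplectic subspace isometric to $(V,\phi)$, and it suffices to construct, for each symplectic space $(V,\phi)$, an $R$-linear map into some $H^n_-$ that pulls the hyperbolic form back to $\phi$.

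The key algebraic input is the following: every alternating bilinear form $\phi$ on a finitely generated projective $R$-module $V$ can be written as $\phi=C-C^{\mathrm t}$ for some (not necessarily symmetric or alternating) bilinear form $C$ on $V$, where $C^{\mathrm t}(v,v'):=C(v',v)$. I would prove this by reduction to the free case: choose $V'$ with $V\oplus V'\cong R^n$ and extend $\phi$ by zero across $V'$ to a form $\tilde\phi$ on $R^n$; one checks $\tilde\phi$ is still alternating, so if the claim holds for $\tilde\phi$ we recover it for $\phi$ by restriction. For a free module, write $\Phi$ for the matrix of $\tilde\phi$; being alternating over an arbitrary ring means $\Phi$ is antisymmetric \emph{and} has vanishing diagonal, and then a direct check shows $\Phi=C-C^{\mathrm t}$ with $C$ the strictly upper-triangular part of $\Phi$.

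Given such a $C$, consider the symplectic space $H(V)=(V\oplus V^*,h)$ with the hyperbolic form $h\bigl((p,\xi),(p',\xi')\bigr)=\xi(p')-\xi'(p)$, which is alternating and, since $V$ is finitely generated projective hence reflexive, unimodular. Define $f\colon V\to V\oplus V^*$ by $f(v)=\bigl(v,\,C(v,-)\bigr)$. A direct computation gives $h\bigl(f(v),f(v')\bigr)=C(v,v')-C(v',v)=\phi(v,v')$, so $f$ is an isometric embedding $(V,\phi)\hookrightarrow H(V)$. Finally, from $V\oplus V'\cong R^n$ one gets $H(V)\perp H(V')\cong H(R^n)\cong H^n_-$ (the hyperbolic form on $R^n\oplus(R^n)^*$ is, after reordering a basis, an orthogonal sum of $n$ hyperbolic symplectic planes), so composing $f$ with the inclusion $H(V)\hookrightarrow H(V)\perp H(V')$ exhibits $(V,\phi)$ as isometric to a subspace of $H^n_-$.

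I expect the only non-formal point to be the decomposition $\phi=C-C^{\mathrm t}$, and within it the care needed over rings where $2$ is not invertible: one must invoke the vanishing-on-the-diagonal form of the alternating condition (not merely antisymmetry, which would suffice only after inverting $2$) for the triangular-part construction to reproduce the diagonal of $\Phi$, and one must verify that extending $\phi$ by zero across the complement $V'$ keeps it alternating. Everything else — unimodularity of $h$, the isometry identity for $f$, and $H(R^n)\cong H^n_-$ — is routine.
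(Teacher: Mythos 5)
Your proof is correct, and it takes a genuinely different route from the paper's. You decompose the alternating form itself as $\phi = C - C^{\mathrm{t}}$ and use this to build an isometric embedding $v\mapsto (v, C(v,-))$ of $(V,\phi)$ into its own hyperbolic envelope $H(V)$, then enlarge $V$ to a free module to land in $H(R^n)\cong H^n_-$; unimodularity of $\phi$ enters only in the preliminary observation that any morphism in $\mathbf{Symp}$ is a split monomorphism. The paper instead reduces to the free case by noting that $P\perp P\perp H_-(Q)$ has free underlying module whenever $P\oplus Q\cong R^m$ (using $P\cong P^*$), and then, for $(R^{2m},S)$, applies the $L-L^{\mathrm{T}}$ decomposition to the \emph{inverse} matrix $S^{-1}$ to write down an explicit change of basis $\begin{pmatrix}L &I\\ L^{\mathrm{T}} &I\end{pmatrix}$ exhibiting $(R^{2m},S)\perp(R^{2m},-S)$ as isometric to the standard hyperbolic space. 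Both proofs ultimately rest on the same elementary fact — an alternating matrix over any ring is $L-L^{\mathrm{T}}$ for a strictly triangular $L$ — but you apply it to $\phi$ and get a functorial embedding $V\into H(V)$, while the paper applies it to $\phi^{-1}$ and produces an explicit matrix isometry identifying the orthogonal complement of the embedded copy as $(R^{2m},-S)$. Your version is more conceptual and works without yet invoking unimodularity of $\phi$ in the embedding step; the paper's version is more computational but carries the extra information that $(P,\phi)$ is an orthogonal \emph{summand} of $H^n_-$ with an identified complement. Both are complete and correct.
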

        \begin{proof}
        	First note that $H^n_-$ is isometric to $(R^{2n},\begin{pmatrix}
        	0 &-I_{2n}\\
        	I_{2n} &0
        	\end{pmatrix} )$. Let $(P,\phi)$ be a symplectic space over $R$. As $P$ is projective, there exists $Q$ such that $P\oplus Q\cong R^m$ for some $m$. The symplectic space $P\perp P\perp H_-(Q)$, where $H_-(Q)=(Q\oplus Q^*,\begin{pmatrix}
        	0 &I_Q\\
        	-I_Q &0
        	\end{pmatrix} )$, then has an underlying space isomorphic to $R^{2m}$. Thus we have reduced to the case when the underlying module is free. Let $(R^{2m},S)$ be a symplectic space. As $S$ is an alternating invertible matrix, we have $S^{-1}=L-L^T$ for some strictly lower triangular matrix $L$. We will show that $(R^{4m},\begin{pmatrix}
        	S &0\\
        	0 &-S
        	\end{pmatrix})$ is isometric to $(R^{4m},\begin{pmatrix}
        	0 &-I_{2m}\\
        	I_{2m} &0
        	\end{pmatrix} )$. Consider the matrix $\begin{pmatrix}
        	L &I_{2m}\\
        	L^T &I_{2m}
        	\end{pmatrix}$.  Its transpose is $\begin{pmatrix}
        	L^T &L\\
        	I_{2m} &I_{2m}
        	\end{pmatrix}$ and hence we have
        	\begin{multline}
        	\begin{pmatrix}
        	L^T &L\\
        	I_{2m} &I_{2m}
        	\end{pmatrix}
        	\begin{pmatrix}
        	S &0\\
        	0 &-S
        	\end{pmatrix}
        	\begin{pmatrix}
        	L &I_{2m}\\
        	L^T &I_{2m}
        	\end{pmatrix}
        	=\begin{pmatrix}
        	L^T &L\\
        	I_{2m} &I_{2m}
        	\end{pmatrix}
        	\begin{pmatrix}
        	SL &S\\
        	-SL^T &-S
        	\end{pmatrix}\\=\begin{pmatrix}
        	L^TSL-LSL^T &-I_{2m}\\
        	I_{2m} & 0
        	\end{pmatrix}
        	\end{multline} 
        	As $L^T-L$ is the two sided inverse of $S$ we have
        	\[L^TSL-LSL^T=L^T(I_{2m}-SL^T)-LSL^T=L^T-L^TSL^T-LSL^T=L^T-L^T=0 \]
        	proving that $(R^{4m},\begin{pmatrix}
        	S &0\\
        	0 &-S
        	\end{pmatrix})\cong (R^{4m},\begin{pmatrix}
        	0 &-I_{2m}\\
        	I_{2m} &0
        	\end{pmatrix} )$.
        \end{proof}
		From this we have the following theorem.
		\begin{theorem}\label{th:symp_subspace}
			The morphism $\Omega_sB(\coprod_n BSp(H^n_-))(R)\to KSp^\perp(R)$ induces isomorphisms $\pi_n \Omega_sB(\coprod_n BSp(H^n_-))(R)\iso \pi_n KSp^\perp(R)$ for all $n\geq 2$. 
		\end{theorem}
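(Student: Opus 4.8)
The plan is to realise this as an instance of the cofinality theorem for the algebraic $K$-theory of symmetric monoidal categories, with Lemma~\ref{lem:symplectic-cofinality} supplying the cofinality hypothesis. First I would set up the comparison cleanly: identify $\coprod_n BSp(H^n_-)(R)$ with the full symmetric monoidal subgroupoid $\mathcal{A}\subseteq i\mathbf{Symp}(\Spec R)$ whose objects are the hyperbolic spaces $H^n_-$, $n\geq 0$. This is legitimate because $\mathrm{Aut}_{i\mathbf{Symp}}(H^n_-)=Sp(H^n_-)(R)$ and there are no isometries between $H^n_-$ and $H^m_-$ when $n\neq m$. Under this identification the morphism in the statement is the map on $K$-theory spaces $\Omega_sB(-)$ induced by $\mathcal{A}\hookrightarrow i\mathbf{Symp}(\Spec R)$, and $KSp^\perp(R)=K^\perp(i\mathbf{Symp}(\Spec R))$ is (a model for) $\Omega_sB(i\mathbf{Symp}(\Spec R))$ since the translation functors $(-)\perp(-)$ are faithful on vector bundles.

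Second, I would record that Lemma~\ref{lem:symplectic-cofinality} is exactly the cofinality statement required: given a symplectic space $(P,\phi)$, choose an isometric embedding $(P,\phi)\into H^n_-$; since $\phi$ is unimodular, the composite $P\into H^n_-\iso (H^n_-)^\ast\epi P^\ast$ is the adjoint of $\phi$, hence an isomorphism, so $H^n_-\cong P\perp P^\perp$ and every object of $i\mathbf{Symp}(\Spec R)$ is an orthogonal summand of an object of $\mathcal{A}$.

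Third comes the homotopy-theoretic core. By the group completion theorem, for a symmetric monoidal groupoid $\mathcal{C}$ the basepoint component of $\Omega_sB\mathcal{C}(R)$ has homology computed as the colimit of $H_\ast(B\mathrm{Aut}(c))$ over the translation category $\langle\mathcal{C}\rangle$, equivalently it is $(\hocolim B\mathrm{Aut})^+$. For $\mathcal{A}$ this gives $(\hocolim_n BSp_{2n}(R))^+$ and for $i\mathbf{Symp}(\Spec R)$ the analogous plus construction over the translation category of all symplectic spaces. The cofinality of step two, together with stable cancellation for symplectic spaces (needed to see that the complement $P^\perp$ is well defined up to the stabilisations occurring in the colimit, so that the comma categories of $\langle\mathcal{A}\rangle\to\langle i\mathbf{Symp}(\Spec R)\rangle$ are connected), shows that this functor of translation categories induces an isomorphism on the homology of the basepoint components in the relevant range; since those components are connected $H$-spaces, hence simple, the map between them is a weak equivalence there, and one reads off $\pi_n\Omega_sB\mathcal{A}(R)\iso\pi_n KSp^\perp(R)$ for $n\geq 2$. (On $\pi_0$ the map is only the inclusion $\ZZ=\mathbb{N}^{\mathrm{gp}}\into KSp_0(R)$, since $KSp_0(R)$ need not be generated by $[H_-]$.)

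I expect the main obstacle to be making the cofinality argument of the third step precise: non-emptiness of the comma categories is immediate from Lemma~\ref{lem:symplectic-cofinality}, but their connectedness requires a genuine cancellation argument for symplectic spaces sitting inside hyperbolic ones, and one must keep careful track of which homotopy degrees the resulting homology isomorphism actually controls — this bookkeeping, together with the low-degree exceptions just noted, is where the restriction to $n\geq 2$ enters.
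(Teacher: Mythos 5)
Your proposal is correct and follows essentially the same route as the paper: both rest on Lemma~\ref{lem:symplectic-cofinality} to exhibit $\coprod_n BSp(H^n_-)$ as a cofinal symmetric monoidal subcategory of $i\mathbf{Symp}(\Spec R)$ and then apply the Grayson--Quillen Cofinality Theorem, which the paper cites directly as \cite{GQ76}. You simply unpack the internals of that theorem (translation categories, group completion, H-space simplicity) rather than invoking it as a black box, and your observation that the embedding splits orthogonally because the form on $P$ is unimodular is exactly the standard way Lemma~\ref{lem:symplectic-cofinality} is upgraded to the ``every object is a summand of a hyperbolic one'' form of cofinality that the theorem requires.
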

		\begin{proof}
			This follows from the Cofinality Theorem \cite{GQ76} and Lemma~\ref{lem:symplectic-cofinality} above.
		\end{proof}
		There are several ways to extend the constructions of $KO^\perp(R)$ and $KSp^\perp(R)$ to arbitrary schemes. Firstly, we note that for any scheme $S$, $KO^\perp$ and $KSp^\perp$ define lax-functors $(Sch^{aff}_S)^{op}\to \mathbf{sSet}$ from affine $S$-schemes to simplicial sets. The naive way to extend this to arbitrary schemes is to use $\mathbf{Symp}(X)$ for non-affine schemes as well. For any $X\in Sm_S$, we define the categories of big symmetric and big symplectic spaces, $\mathbf{Symp}_{Sm_S}(X)$ and $\mathbf{Symm}_{Sm_S}(X)$ respectively, along the same lines as the category of big vector bundles (i,e. we fix a choice of pullback for each form)\cite[Sec.~10.5]{Wei13}. These are equivalent as symmetric monoidal categories to $\mathbf{Symp}(X)$ and $\mathbf{Symm}(X)$ respectively. We can then define simplicial presheaves
		\[ KO^{\perp}:Sm^{op}_S\to \mathbf{sSet}\]
		\[ KSp^{\perp}:Sm^{op}_S\to\mathbf{sSet} \]
		extending $KO^\perp(R)$ and $KSp(R)^\perp$ respectively.
		\begin{definition}\label{def:KSp_perp}
			Let $S$ be any scheme.	
			\begin{enumerate}
				\item $KO^{\perp}:Sm^{op}_S\to \mathbf{sSet}$ is the simplicial presheaf given by $KO^{\perp}(X)= K^{\perp}(i\mathbf{Symm}_{Sm_S}(X))$.
				\item $KSp^{\perp}:Sm^{op}_S\to \mathbf{sSet}$ is the simplicial presheaf given by $KSp^{\perp}(X)= K^{\perp}(i\mathbf{Symp}_{Sm_S}(X))$.
			\end{enumerate}
		\end{definition}
		For simplicity we will denote $Sp(H^n_-)$ by $Sp_{2n}$ from now on.
		\begin{theorem}\label{th:BSp_symp}
			The morphism $\coprod_n Sp_{2n}(-)\to i\mathbf{Symp}_{Sm^{op}_S}(-)$ in $Fun(Sm^{op}_S,\mathbf{Cat})$ given by $n\mapsto H^{n}_-$ induces a local weak equivalence of simplicial presheaves 
			\[\coprod_n BSp_{2n}\to B(i\mathbf{Symp}_{Sm^{op}_S})\] 
			with respect to the Zariski topology on $Sm_S$. In particular they are isomorphic as objects in $H_\bullet(S)$.
		\end{theorem}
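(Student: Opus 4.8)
The plan is to verify that the map is a weak equivalence stalkwise for the Zariski topology, where it collapses to an elementary statement about symplectic spaces over local rings. Recall that the Zariski topos on $Sm_S$ has enough points: one for each pair $(X,x)$ with $X\in Sm_S$ and $x\in X$, the associated stalk of a simplicial presheaf $\mc{F}$ being $\mc{F}_{X,x}=\colim_{x\in U\subseteq X}\mc{F}(U)$, the colimit taken over Zariski open neighbourhoods of $x$. Hence a morphism of simplicial presheaves on $Sm_S$ is a Zariski-local weak equivalence if and only if it induces a weak equivalence of simplicial sets on each such stalk. The first step is therefore to identify these stalks. Since symplectic spaces and isometries between them are of finite presentation, the standard limit argument shows that every symplectic space over $\mc{O}_{X,x}=\colim_U\mc{O}_X(U)$, and every isometry between two such, descends to some $U\ni x$, and that two isometries which agree over $\mc{O}_{X,x}$ already agree over a smaller $U$; together with the fact that the nerve commutes with filtered colimits of categories (the bookkeeping of chosen pullbacks in the ``big'' category $\mathbf{Symp}_{Sm_S}$ being immaterial up to equivalence), this identifies the stalk of $B(i\mathbf{Symp}_{Sm^{op}_S})$ at $(X,x)$ with $B(i\mathbf{Symp}(R))$ and the stalk of $\coprod_n BSp_{2n}$ with $\coprod_n B(Sp_{2n}(R))$, where $R=\mc{O}_{X,x}$ is a local ring. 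We are then reduced to showing that the functor of groupoids $\coprod_{n\ge 0}\mathcal{G}_n\to i\mathbf{Symp}(R)$ — where $\mathcal{G}_n$ is the one-object groupoid with automorphism group $Sp_{2n}(R)$, sent to $H^n_-$ — induces a weak equivalence on nerves, and for that it suffices to prove it is an equivalence of categories.

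Full faithfulness is formal: the source has no morphisms between objects in distinct summands, an isometry $H^n_-\to H^m_-$ is in particular an $R$-module isomorphism $R^{2n}\iso R^{2m}$, which forces $n=m$ as $R\neq 0$, and $\mathrm{Isom}_R(H^n_-,H^n_-)=Sp(H^n_-)(R)=Sp_{2n}(R)$ by definition. The content is essential surjectivity, i.e. that every symplectic space over the local ring $R$ is isometric to some $H^n_-$. I would prove this by the classical argument: given a symplectic space $(P,\phi)$, the module $P$ is projective, hence free, and of even rank $2n$ because $(P,\phi)\otimes_R k$ is a non-degenerate alternating form over the residue field $k$; then, the adjoint $\hat\phi\colon P\iso P^\ast$ being an isomorphism, for a unimodular $v\in P$ the functional $\phi(v,-)$ is surjective, so there is $w$ with $\phi(v,w)=1$, the vectors $v,w$ are linearly independent, and the rank-$2$ summand $N=Rv+Rw$ carries the form $H_-$; since a symplectic subspace is always an orthogonal direct summand, $P=N\perp N^{\perp}$ with $(N^{\perp},\phi|_{N^{\perp}})$ symplectic of rank $2n-2$, and induction yields $(P,\phi)\cong H_-\perp H^{n-1}_-=H^n_-$. (This is genuinely stronger than Lemma~\ref{lem:symplectic-cofinality}, which only produces an isometric \emph{embedding} into some $H^n_-$ over an arbitrary ring; local triviality needs the isometry, which is exactly why the Zariski topology is enough.) Thus the functor above is an equivalence of groupoids, hence a homotopy equivalence on nerves, so by the first paragraph $\coprod_n BSp_{2n}\to B(i\mathbf{Symp}_{Sm^{op}_S})$ is a Zariski-local weak equivalence; every Zariski cover being a Nisnevich cover, it is a fortiori a Nisnevich-local weak equivalence, hence an isomorphism in $H(S)$.

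The main obstacle is the essential-surjectivity step, namely the hyperbolicity of symplectic spaces over local rings — classical, but it is the only non-formal input. The other point requiring a little care is the identification of stalks in the first paragraph: one has to run the finite-presentation argument at the level of the groupoids $i\mathbf{Symp}$ and the group schemes $Sp_{2n}$, and keep track of the passage between the ``big'' and the ordinary categories of symplectic spaces, but these are routine.
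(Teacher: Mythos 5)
Your proposal follows essentially the same route as the paper: reduce to Zariski stalks (local rings $\mc{O}_{X,x}$), identify the stalk of each side with the corresponding evaluation at $\Spec(\mc{O}_{X,x})$, and observe that the functor of groupoids is an equivalence because every symplectic space over a local ring is hyperbolic. The only difference is that the paper cites this hyperbolicity fact from Lam's book (\cite[Thm.~5.8]{Lam06}) while you spell out the classical splitting-off-of-a-hyperbolic-plane argument; both are correct, and your extra care in discussing the stalk identification and the big/small category passage is a reasonable elaboration of what the paper compresses into a sentence.
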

		\begin{proof}
			Over a local ring $R$, every symplectic space is isometric to some $H^{ n}_-$ \cite[Thm.~5.8]{Lam06}. This implies that $\coprod Sp_{2n}(R)\to i\mathbf{Symp}(R)$ is an equivalence of groupoids and hence induces a weak equivalence of simplicial sets \[\coprod_nBSp_{2n}(R)\to Bi\mathbf{Symp}(R).\] As $BSp_{2n}$ is a degreewise representable simplicial sheaf, the stalks are just $\coprod_n BSp_{2n}(\mc{O}_{U,u})$, the evaluations at $\Spec(\mc{O}_{U,u})$. The isomorphism of simplicial sets, $\colim_{x\in U}Bi\mathbf{Symp}(U)\iso Bi\mathbf{Symp}(\mc{O}_{U,u})$, then induces a weak equivalence $\coprod_nBSp_{2n}(\mc{O}_{U,u})\to Bi\mathbf{Symp}(\mc{O}_{U,u})$ of stalks in the Zariski topology and so we are done. 
		\end{proof}
		This result implies that the induced map of objectwise group completions is also a weak equivalence.
		\begin{corollary}\label{th:BSp_symp2}
			For any scheme $S$, there are isomorphisms 
			\[\Omega^1_sB(\coprod_n BSp_{2n})\iso K^\perp(\coprod_n Sp_n)\iso KSp^\perp \]
			in $H_\bullet(S)$.
		\end{corollary}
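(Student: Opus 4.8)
The plan is to obtain both isomorphisms from Theorem~\ref{th:BSp_symp} by applying an objectwise group-completion functor, after noting that two of the three simplicial presheaves in the statement are group completions by construction. For a symmetric monoidal groupoid $\mc{C}$ the K-theory space $K^\perp(\mc{C})$ of \cite[Def.~4.3]{Wei13} is, by the group-completion theorem, the group completion $\Omega_s B(B\mc{C})$ of the $H$-space $B\mc{C}$. Applying this objectwise, and using that the nerve of the disjoint union of groups $\coprod_n Sp_{2n}(X)$ is $\coprod_n BSp_{2n}(X)$, we see that $K^\perp(\coprod_n Sp_{2n})$ is a model for $\Omega^1_s B(\coprod_n BSp_{2n})$; this is the first isomorphism. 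Similarly, by Definition~\ref{def:KSp_perp}, $KSp^\perp(X)=K^\perp(i\mathbf{Symp}_{Sm_S}(X))$ is a model for $\Omega^1_s B(B(i\mathbf{Symp}_{Sm_S^{op}}))(X)$. Under these identifications, the comparison map $K^\perp(\coprod_n Sp_{2n})\to KSp^\perp$ is exactly $\Omega^1_s B(-)$ applied to the map of simplicial presheaves from Theorem~\ref{th:BSp_symp}.

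It therefore remains to check that objectwise group completion sends the Zariski-local weak equivalence $\coprod_n BSp_{2n}\to B(i\mathbf{Symp}_{Sm_S^{op}})$ of Theorem~\ref{th:BSp_symp} to a local weak equivalence. I would verify this on stalks, exactly as in the proof of Theorem~\ref{th:BSp_symp}: a map of simplicial presheaves is a Zariski-local weak equivalence precisely when it induces a weak equivalence on every stalk, and the stalk at $(U,u)$ is the filtered colimit of the sections over opens $V\ni u$. Two formal facts then suffice. First, group completion of symmetric monoidal (hence $E_\infty$) spaces is left adjoint to the inclusion of grouplike $E_\infty$-spaces, so it commutes with filtered colimits; hence the stalk of $\Omega^1_s B(\mc{F})$ is the group completion of the stalk of $\mc{F}$, and here the relevant stalks are $\coprod_n BSp_{2n}(\mc{O}_{U,u})$ and $B(i\mathbf{Symp}(\mc{O}_{U,u}))$, as already computed in the proof of Theorem~\ref{th:BSp_symp}. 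Second, group completion preserves weak equivalences of $H$-spaces. Putting these together, the map on stalks is $\Omega^1_s B(-)$ of the stalkwise weak equivalence $\coprod_n BSp_{2n}(\mc{O}_{U,u})\to B(i\mathbf{Symp}(\mc{O}_{U,u}))$ --- which in turn was induced by the equivalence of symmetric monoidal groupoids $\coprod_n Sp_{2n}(\mc{O}_{U,u})\to i\mathbf{Symp}(\mc{O}_{U,u})$ coming from \cite[Thm.~5.8]{Lam06} --- so it is again a weak equivalence. Hence $\Omega^1_s B(-)$ of the Theorem~\ref{th:BSp_symp} map is a stalkwise, thus local, weak equivalence, and both induced maps become isomorphisms in $H_\bullet(S)$.

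I expect the only genuine obstacle here to be model-theoretic rather than mathematical: the symbol ``$\Omega^1_s B(-)$'' must denote a group-completion model that is a true homotopy functor and that commutes, up to weak equivalence, with the filtered colimits computing Zariski stalks. One can secure this either by fixing a functorial, derived version of the bar/loop construction, or by simply appealing to the fact that $K^\perp$ of symmetric monoidal categories is already such a functor and commutes with filtered colimits of categories. Once one such model is fixed and used throughout, the argument is essentially formal, carrying no geometric content beyond Theorem~\ref{th:BSp_symp}; in particular no new input about symplectic spaces is required.
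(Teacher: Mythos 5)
Your argument is correct and follows the same route as the paper: reduce to Zariski stalks via Theorem~\ref{th:BSp_symp}, then observe that the group-completion construction (whether realized as $K^\perp$ \emph{\`a la} Grayson--Quillen or as $\Omega^1_s B$) commutes with the filtered colimits that compute stalks and takes stalkwise equivalences to stalkwise equivalences. The paper phrases the second point by appeal to the explicit $S^{-1}S$ construction of \cite{GQ76} rather than the adjoint-functor argument you sketch, but the content is the same.
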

		\begin{proof}
			Theorem \ref{th:BSp_symp} implies $KSp^\perp(\coprod_n Sp_n(\mc{O}_{U,u}))\iso KSp^\perp(\mc{O}_{U.u})$ for any point $(U,u)$. Therefore it is enough to show that these spaces are  weakly equivalent to the corresponding stalks. This follows from the analogous result for the classifying spaces and the construction of group completions given in \cite{GQ76}.
		\end{proof}
		\begin{remark}
			Currently there is some ambiguity regarding the correct definition of symplectic K-theory over an arbitrary scheme. In general the hermitian K-theory for categories with duality gives us Grothendieck-Witt spaces $GW(X)$ and $GW^-(X)$ (\cite{Sch10}) for any scheme $X$. These spaces give us all the desired properties for regular Noetherian schemes with $\half\in \Gamma(X,\mc{O}_X)$. In the general case, a recent paper \cite{9.herm1} constructs  multiple models of Grothendieck-Witt spaces (actually $\Omega$-spectra) which are equivalent when $2$ is invertible.  
		\end{remark}
		Let $X$ be a scheme, $i:U\into X$ be an open embedding and $n\in\mathbb{N}$. We denote by $KO^{[n]}(X)$, $KSp^{[n]}(X)$, $KO^{[n]}(X,U)$ and $KSp^{[n]}(X,U)$, the Grothendieck-Witt spaces as in \cite{Sch10b}.
		By \cite[Sec.~8 Cor.~1]{Sch10b} we have homotopy equivalences 
		\begin{equation}\label{spo_iso}
		KSp^{[n]}(X)\iso KO^{[n+4k+2]}(X) \quad\text{and}\quad KSp^{[n]}(X,U)\iso KO^{[n+4k+2]}(X,U)
		\end{equation} 
		for all $n,k\in\ZZ$ and for any open embedding $i:U\into X$.
 
		\begin{theorem}[{\cite[Thm.~5.1]{PW18}}]\label{ko:unstable_rep}
			Let $S$ be any regular Noetherian separated scheme of finite Krull dimension with $\half\in\Gamma(S,\mc{O}_S)$. For any $X\in Sm_S$ and any $n\geq 0$ there is an isomorphism of groups
			\begin{equation}
			KO^{[n]}_i(X)=\pi_i(KO^{[n]}(X))\cong [S^i\wedge X_+, KO^{[n]}]_{\mathbb{A}^1}
			\end{equation}
		\end{theorem}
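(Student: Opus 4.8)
This is Panin and Walter's theorem; I sketch the argument I would give. The plan is to apply the standard recipe for unstable representability: show that the simplicial presheaf $KO^{[n]}$ on $Sm_S$ satisfies (a) the Nisnevich Brown--Gersten property --- it carries every elementary distinguished Nisnevich square to a homotopy cartesian square --- and (b) $\mathbb{A}^1$-invariance, i.e. the projection induces a weak equivalence $KO^{[n]}(X)\iso KO^{[n]}(X\times\mathbb{A}^1)$ for every $X\in Sm_S$. A simplicial presheaf with properties (a) and (b) over a regular Noetherian base of finite Krull dimension becomes, after Nisnevich-fibrant replacement, $\mathbb{A}^1$-local and hence motivically fibrant (the Morel--Voevodsky recognition criterion), and that Nisnevich-fibrant replacement does not alter the weak homotopy type of the value on any representable because $KO^{[n]}$ already satisfies Nisnevich descent. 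Since $KO^{[n]}$ takes values in infinite loop spaces --- so $\pi_0$ is a group and the basepoint is canonical --- I would then read off, for all $X\in Sm_S$ and $i\ge 0$,
\[ [S^i\wedge X_+,KO^{[n]}]_{\mathbb{A}^1}\;\cong\;\pi_i\,KO^{[n]}(X)\;=\;KO^{[n]}_i(X), \]
which is the assertion.

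It then remains to verify (a) and (b). For (b) I would invoke Schlichting's homotopy invariance theorem: hermitian $K$-theory is $\mathbb{A}^1$-invariant on regular schemes in which $2$ is invertible \cite{Sch10b}, generalising the homotopy invariance results of Karoubi and Hornbostel. This is the single point where both the regularity of $S$ and the hypothesis $\half\in\Gamma(S,\mc{O}_S)$ are indispensable, and it is exactly why the theorem does not descend to $\Spec(\ZZ)$. For (a) I would use Schlichting's localization theorem for the Grothendieck--Witt spaces $KO^{[n]}(X)$, $KO^{[n]}(X,U)$, $KO^{[n]}(U)$ attached to an open immersion $U\into X$ \cite{Sch10b}: it furnishes a homotopy fibre sequence, natural in the pair $(X,U)$, in which the relative term $KO^{[n]}(X,U)$ satisfies Nisnevich excision. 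For an elementary distinguished square with \'etale map $X'\to X$ inducing an isomorphism away from $U$, excision identifies the relative terms for $(X,U)$ and $(X',U')$, and comparing the two localization fibre sequences produces the required homotopy cartesian square; the Zariski Mayer--Vietoris case is the instance where $X'\sqcup U\to X$ is a Zariski cover. A routine filtered-limit argument reduces everything to the Noetherian finite-dimensional setting in which the descent criterion applies.

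The hard part will be the bookkeeping inside (a): one has to track the $n$-shifted duality --- and the periodicity isomorphisms \eqref{spo_iso} --- on each relative Grothendieck--Witt space, make sure the localization fibre sequence is natural enough to be compared across the distinguished square, and, since $GW$-spaces are genuine $\Omega$-spectra rather than merely connective spaces, control the negative homotopy groups so that the comparison is an equivalence of spaces and not just on connective covers. Once (a) is secured, (b) enters as a black box and the passage to the displayed formula is formal.
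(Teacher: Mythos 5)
The paper does not prove this theorem: it is cited verbatim from Panin and Walter \cite[Thm.~5.1]{PW18}, so there is no in-paper argument to compare your proposal against. That said, your sketch is a correct outline of what Panin and Walter actually do --- Nisnevich descent from Schlichting's localization theorem together with Nisnevich excision for the relative $GW$-spaces, $\mathbb{A}^1$-invariance from homotopy invariance of Grothendieck--Witt theory over regular schemes containing $\half$, and then the Morel--Voevodsky recognition criterion (with the Brown--Gersten property guaranteeing that Nisnevich-fibrant replacement does not change sections on smooth schemes). The subtlety you flag about descending the fibre sequence of genuine $GW$-$\Omega$-spectra to a homotopy cartesian square of the connective spaces $KO^{[n]}(-)$ is the real one, and it is dealt with in the cited source; the ``routine filtered-limit reduction to the Noetherian finite-dimensional setting'' you mention is unnecessary here since $S$ is already assumed Noetherian of finite Krull dimension.
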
 
		In the case of affine schemes where two is invertible we get back $KSp^\perp$ and $KO^\perp$.
		\begin{theorem}
		Let $X\cong \Spec(R)$ and $\half\in R$, we then have homotopy equivalences
		\begin{equation}
		KO^{\perp}(X)\iso KO^{[0]}(X)
		\end{equation}
		\begin{equation}
		KSp^{\perp}(X)\iso KSp^{[0]}(X).
		\end{equation} 	
		\end{theorem}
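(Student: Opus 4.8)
The plan is to identify each side with the group completion of a symmetric monoidal groupoid of forms; the two features of the hypothesis that make this work are that $X$ affine forces $Vect(X)$ to be a \emph{split} exact category, and that $2$ invertible makes hermitian additivity available.

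First I would unwind the definition of $KO^{[0]}(X)$ from \cite{Sch10b}: it is the Grothendieck--Witt space of the exact category $Vect(X)$ equipped with the duality $\mc{E}\mapsto\mc{E}^\vee=\mc{H}om(\mc{E},\mc{O}_X)$, the canonical double-dual isomorphism, and the symmetric sign corresponding to the shift $[0]$; since $\half\in R$, the nondegenerate quadratic forms and the nondegenerate symmetric bilinear forms for this datum coincide, so the symmetric monoidal groupoid of nondegenerate forms is precisely $i\mathbf{Symm}(X)$. Dually, $KSp^{[0]}(X)=KO^{[2]}(X)$ (compare \eqref{spo_iso}) is the Grothendieck--Witt space of $Vect(X)$ with the same underlying duality but the skew sign, whose groupoid of nondegenerate forms, using $\half\in R$ once more, is exactly the groupoid $i\mathbf{Symp}(X)$ of unimodular alternating forms and isometries.

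The main step is a comparison result: for a \emph{split} exact category with duality in which $2$ is invertible, the Grothendieck--Witt space of \cite{Sch10b} is naturally homotopy equivalent to the group completion $K^\perp$ of the symmetric monoidal groupoid of its nondegenerate forms. This is the hermitian analogue of the equivalence $K^\oplus(\mc{E})\iso K(\mc{E})$ valid for any split exact category $\mc{E}$, and it is proved by the same mechanism: when every admissible short exact sequence splits, the hermitian additivity theorem of \cite{Sch10} — and this is exactly where $\half$ enters — shows that the hermitian $\mc{S}_\bullet$-construction degenerates, in each simplicial degree, to a product indexed by the groupoid of forms, so that its geometric realization is the bar construction computing the group completion; compare also \cite{Kar73} and \cite{Wei13}. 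Granting this, and using that $X=\Spec(R)$ affine makes $Vect(X)$ split exact, the previous paragraph gives homotopy equivalences
\[ KO^\perp(X)=K^\perp(i\mathbf{Symm}(X))\iso KO^{[0]}(X), \qquad KSp^\perp(X)=K^\perp(i\mathbf{Symp}(X))\iso KSp^{[0]}(X). \]

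The one genuine obstacle is to upgrade this to an equivalence of \emph{spaces} rather than a statement about homotopy groups. I would first produce the comparison map explicitly, as the canonical map from the group completion of forms into the Grothendieck--Witt space (a symmetric, resp. symplectic, space being a $0$-simplex of the hermitian $\mc{S}_\bullet$-construction), and then check it is a $\pi_*$-isomorphism: on $\pi_0$ this is the identification $KO^\perp_0(R)=GW(R)=KO^{[0]}_0(R)$ recorded in Section~\ref{sec:HKT}, and on the higher groups it is the hermitian additivity/cofinality argument above — precisely the point at which the hypothesis $\half\in R$ cannot be dropped, and the reason the statement does not extend to $\Spec(\ZZ)$. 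Since both spaces are grouplike $H$-spaces (their $\pi_0$'s are groups and the $H$-space structure is induced by $\perp$), a $\pi_*$-isomorphism between them is a homotopy equivalence, which completes the proof.
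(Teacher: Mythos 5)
Your proposal arrives at the right conclusion, but it takes a genuinely different route from the paper. The paper's proof is a two-line citation argument: since $\half\in R$, every skew-symmetric form is alternating, so $KO^\perp(X)$ and $KSp^\perp(X)$ coincide with Karoubi's ${}_1K^h(R)$ and ${}_{-1}K^h(R)$, and then \cite[Cor.~4.6]{Sch04} together with \cite[Prop.~6]{Sch10b} supply the homotopy equivalences with $KO^{[0]}(X)$ and $KSp^{[0]}(X)$ directly. You instead reconstruct the comparison theorem from the ground up: identify each side with the group completion of the same symmetric monoidal groupoid of nondegenerate forms, and argue the equivalence via split exactness of $Vect(\Spec R)$ together with the hermitian additivity theorem. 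The underlying mathematics is the same --- your ``main step'' is exactly the content of the cited Schlichting results --- but where the paper outsources that step to the literature, you sketch its proof. Your sketch is correct in outline, though it elides nontrivial detail: the claim that the hermitian $\mc{S}_\bullet$-construction ``degenerates, in each simplicial degree, to a product indexed by the groupoid of forms'' is a loose paraphrase of what additivity gives you after geometric realization, not a level-wise simplicial degeneration, and making this precise would essentially reprove \cite[Cor.~4.6]{Sch04}. That is why the paper's short citation proof is preferable. Both approaches correctly identify $\half\in R$ as the hypothesis doing the work: it makes hermitian additivity available and collapses skew-symmetric onto alternating forms, and both correctly observe that the statement fails over $\Spec(\ZZ)$ for precisely this reason.
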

		\begin{proof}
		When $\half\in R$, every skew-symmetric form is alternating and hence  $KO^\perp(X)$ and $KSp^{\perp}(X)$ are equal to ${}_1K^h(R)$ and ${}_{-1}K^h(R)$ of \cite{Kar73} respectively. The results \cite[Cor.~4.6]{Sch04} and \cite[Prop.~6]{Sch10b} supply the desired homotopy equivalences.
		\end{proof}
				
	\section{Unstable representability}\label{sec:unstable}
		The main reference for this section is \cite[Sec.~8]{PW18}. We fix a noetherian scheme $S$ of finite Krull dimension. The functor $X\mapsto Sp_{2n}(\Gamma(X,\mc{O}_X))$ is representable by a group scheme, in fact a closed subgroup scheme of $GL_{2n}$, which we also denote by $Sp_{2n}$. There are morphisms of schemes $Sp_{2n}\to Sp_{2n+2}$ given by $A\mapsto A\oplus\begin{pmatrix}
		0 &-1\\
		1 &0
		\end{pmatrix}$
		for each $n$. Let $Sp_\infty=\colim_n Sp_{2n}$ denote the colimit which is a group object in the category of motivic spaces. Let $HGr(2r,2n)$ be the quaternionic Grassmannian scheme which classifies rank $2r$ subbundles of $H^{2n}_-$ and $HGr$ is the ind-scheme $\colim_n HGr(2n,4n)$. 
		\begin{theorem}\label{iso_ksp}
			Over any noetherian scheme $S$ of finite Krull dimension we have a sequence of isomorphisms 
			\[\ZZ\times HGr\cong \ZZ\times BSp_\infty\cong  R\Omega^1_sB(\coprod_n BSp_{2n}) \cong KSp^\perp\] as objects in $H_\bullet(S)$.
		\end{theorem}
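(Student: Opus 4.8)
To prove Theorem~\ref{iso_ksp} the plan is to follow the argument of \cite[Sec.~8]{PW18} for their Theorem~8.2, isolate the places where the hypothesis $\half\in\Gamma(S,\mc{O}_S)$ or regularity is used, and replace them by the results of Section~\ref{sec:HKT}. The chain of isomorphisms splits into three links. The third link, $R\Omega^1_sB(\coprod_n BSp_{2n})\cong KSp^\perp$ in $H_\bullet(S)$, is precisely Corollary~\ref{th:BSp_symp2} (which in turn rests on Theorem~\ref{th:BSp_symp}, Lemma~\ref{lem:symplectic-cofinality} and Theorem~\ref{th:symp_subspace}): this is the one link for which dropping $\half$ forced a genuinely new cofinality argument, and it has already been supplied. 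Only the first two links remain, and for these I claim the original argument of \cite{PW18} is insensitive to the invertibility of $2$.

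For the middle link $\ZZ\times BSp_\infty\cong R\Omega^1_sB(\coprod_n BSp_{2n})$ I would invoke the group completion theorem. The simplicial presheaf $\coprod_n BSp_{2n}$ is a presheaf of homotopy-commutative monoids under block sum, with $\pi_0$ the free commutative monoid $\mathbb{N}$ generated by the class of $BSp_2$, and the stabilization maps $BSp_{2n}\to BSp_{2n+2}$ are exactly translation by this generator. Evaluated at the stalk $\mc{O}_{U,u}$ of any point, the classical group completion theorem (cf.\ the discussion of group completions in \cite{GQ76}) identifies $\Omega^1_sB(\coprod_n BSp_{2n})(\mc{O}_{U,u})$ with $\ZZ\times\colim_n BSp_{2n}(\mc{O}_{U,u})=\ZZ\times BSp_\infty(\mc{O}_{U,u})$. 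Since the bar construction, the derived simplicial loop functor, and the colimit along the stabilization maps all commute with passage to stalks, this assembles into a local weak equivalence, hence the desired isomorphism in $H_\bullet(S)$; no hypothesis on $2$ or on regularity enters, the statement being about simplicial monoids and making no reference to the ground ring.

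For the first link $\ZZ\times HGr\cong\ZZ\times BSp_\infty$ --- equivalently $HGr\cong BSp_\infty$ --- I would argue as in \cite{PW18}: the quaternionic Grassmannian $HGr(2r,2n)$ is the quotient of an associated symplectic Stiefel scheme $HV(2r,2n)$ by a free right action of $Sp_{2r}$, and because $Sp_{2r}$ is a special group this quotient map is a Zariski-locally trivial $Sp_{2r}$-torsor; moreover $\colim_n HV(2r,2n)$ is $\mathbb{A}^1$-contractible, so $\colim_n HGr(2r,2n)$ is a model for $BSp_{2r}$ in $H_\bullet(S)$. Comparing the diagonal colimit $HGr=\colim_n HGr(2n,4n)$ with the iterated colimit $\colim_r\colim_n HGr(2r,2n)$ by a cofinality argument on the bi-directed system of quaternionic Grassmannians then gives $HGr\cong\colim_r BSp_{2r}=BSp_\infty$, and adjoining the factor $\ZZ$ is harmless.

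The main obstacle is checking that the geometric inputs to the first link carry over from a regular base containing $\half$ to an arbitrary noetherian base of finite Krull dimension --- namely the Zariski-local triviality of $HV(2r,2n)\to HGr(2r,2n)$ and, above all, the $\mathbb{A}^1$-contractibility of $\colim_n HV(2r,2n)$. I expect this to be unproblematic, since these are assertions about the explicit quasi-affine schemes $HV(2r,2n)$, whose relevant structure --- iterated affine-bundle and open-subscheme decompositions that become highly $\mathbb{A}^1$-connected in the colimit --- is stable under arbitrary base change, so one may either quote the corresponding proofs of \cite{PW18} verbatim or base-change them to $\Spec(\ZZ)$ and run them there. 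Verifying line by line that neither regularity nor the invertibility of $2$ is used implicitly in that portion of \cite{PW18} is the step that will require the most care.
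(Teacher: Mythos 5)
Your treatment of the first and third links is consistent with the paper's: the third is exactly Corollary~\ref{th:BSp_symp2}, and for the first the paper likewise invokes the Stiefel-bundle argument from \cite[Sec.~8]{PW18} and simply asserts that neither regularity nor $\half$ is used there (the verification you flag as ``requiring the most care'' is asserted, not spelled out).

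The middle link, however, contains a genuine gap. You invoke ``the classical group completion theorem'' stalkwise to claim that $\Omega^1_sB(\coprod_n BSp_{2n})(\mc{O}_{U,u})$ is weakly equivalent to $\ZZ\times BSp_\infty(\mc{O}_{U,u})$. This is false at the level of ordinary simplicial sets: the classical theorem identifies $\Omega B(\coprod_n BSp_{2n}(R))$ with $(\ZZ\times BSp_\infty(R))^+$, a \emph{plus construction}, and the plus-construction map $BSp_\infty(R)\to BSp_\infty(R)^+$ is generally not a weak equivalence because $\pi_1$ of the discrete classifying space is non-abelian and acts nontrivially on higher homotopy. So the stalkwise comparison gives only a Zariski-local \emph{homology} equivalence, which does not yield the desired isomorphism in $H_\bullet(S)$. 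The point of the paper's Lemma~\ref{lem:grp_completion} is precisely that the plus construction becomes invisible only after $\mathbb{A}^1$-localization, and even then only under an additional hypothesis. That lemma requires verifying not just that $\coprod_n BSp_{2n}$ is graded with homotopy-commutative multiplication (your conditions, corresponding to (1) and (2)) but also the more delicate condition (3) --- compatibility of the addition with the degree-$2$ shift up to $\mathbb{A}^1$-homotopy --- which is exactly the hypothesis added to repair \cite[Thm.~4.1.10]{MV99} in light of the counterexample in \cite[Rem.~8.5]{ST13}. It is (3) that forces $BSp_\infty$ to be an H-space in the $\mathbb{A}^1$-local world, so that $\pi_1$ is abelian and acts trivially on higher homotopy, letting Whitehead upgrade the homology isomorphism to a weak equivalence. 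The paper verifies (2) and (3) by an explicit geometric argument: the relevant permutation matrices lie in $Sp_{2m}$, decompose into elementary matrices of the form $I+N$, and each $I+tN$ gives an $\mathbb{A}^1$-path in $Sp_{2m}(\Delta R)$ from that elementary matrix to the identity. Your proposal neither performs an $\mathbb{A}^1$-fibrant replacement before passing to stalks nor verifies condition (3), so the middle link as written does not go through.
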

		This is equivalent to \cite[Thm.~8.2]{PW18} if  $2$ is invertible. We need the following lemma which is essentially \cite[Thm.~4.1]{MV99} but tweaked to correct a mistake initially pointed out in \cite{ST13}. Recall that a pointed graded simplicial sheaf of monoids is a quadruple $(M,+,\alpha,f)$ where $(M,+)$ is a sheaf of simplicial monoids and $\alpha:\mathbb{N}\to M$, $f:M\to\mathbb{N}$ are morphisms of sheaf of monoids with $f\alpha=Id$. In the lemma below by sheaf we mean sheaf over $Sm_S$ with the Nisnevich topology. 
	\begin{lemma}\label{lem:grp_completion}
		Let $(M,+,\alpha,f)$ be a pointed graded simplicial sheaf of monoids over the with $M_\infty=colim_n f^{-1}(n)$. Assume the following three conditions hold.
		\begin{enumerate}
			\item The map $\underline{\pi}^{\mathbb{A}^1}_0(f): \underline{\pi}^{\mathbb{A}^1}_0(M)\to\mathbb{N}$ is an isomorphism of constant sheaves.
			\item The monoid $(M,+)$ is commutative in $H_\bullet(S)$ under the induced monoidal structure.
			\item The diagram 
			\[
			\begin{tikzcd}
			M_{n}\times M_{n}\arrow[r,"+"]\arrow[d] &M_{2n}\arrow[d,"\alpha(2)+"]\\
			M_{n+1}\times M_{n+1}\arrow[r]  &M_{2n+2}
			\end{tikzcd}
			\]
			commutes in $H_\bullet(S)$.
		\end{enumerate}
		Then the canonical morphism $M_\infty\times\ZZ\to R\Omega^1_sBM$ is an $\mathbb{A}^1$-weak equivalence.  
	\end{lemma}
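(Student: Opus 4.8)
The plan is to follow the proof of \cite[Thm.~4.1]{MV99}, with the correction of \cite{ST13}, organised into three stages: (i) a group completion theorem at the level of homology sheaves for the bar construction $BM$; (ii) an identification of the telescope $M_\infty$ with the resulting homology group completion, which is where hypotheses (1) and (3) are used; and (iii) an upgrade of the homology isomorphism to an $\mathbb{A}^1$-weak equivalence via a Hurewicz-type argument, which is where hypothesis (2) is used.

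For stage (i): since $(M,+,0)$ is a strict simplicial sheaf of monoids, the bar construction $BM$ and the natural map $M\to R\Omega^1_s BM$ are defined on the nose, and I would first prove the sheaf-theoretic group completion theorem, namely that $M\to R\Omega^1_s BM$ induces on integral homology sheaves the localisation $H_*(M)\to H_*(M)[\underline{\pi}_0(M)^{-1}]$. Here one works with an objectwise (equivalently, stalkwise) model of $BM$, applies the classical group completion theorem of McDuff--Segal sectionwise, and checks that sheafification, $\mathbb{A}^1$-fibrant replacement and the formation of $R\Omega^1_s$ are all compatible, using that filtered colimits and realisations are homotopy-invariant. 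Hypothesis (2) is what makes the localisation sensible --- the $\underline{\pi}_0$-action is by central classes --- and it equips $BM$ with the homotopy-commutative H-space structure used in the last stage; this is exactly the point at which \cite{MV99} was incomplete and which \cite{ST13} repairs.

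For stage (ii): by hypothesis (1) one has $\underline{\pi}_0^{\mathbb{A}^1}(M)\cong\mathbb{N}$ as a constant sheaf, so its group completion is the constant sheaf $\ZZ$, which accounts for the $\ZZ$-factor and already shows $M_\infty\times\ZZ\to R\Omega^1_s BM$ is an isomorphism on $\underline{\pi}_0^{\mathbb{A}^1}$. Localising $H_*(M)$ at $\mathbb{N}$ is the same as forming the mapping telescope of multiplication by the class of a generator, and hypothesis (3) is precisely the coherence that lets one identify this telescope with $H_*(M_\infty)$: it says, in $H_\bullet(S)$, that the stabilisation maps used to build $M_\infty=\colim_n f^{-1}(n)$ are compatible with doubling and hence represent multiplication by a generator in the H-space $\coprod_n f^{-1}(n)$. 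Together with stage (i), the map $M_\infty\times\ZZ\to R\Omega^1_s BM$ is then an isomorphism on $\mathbb{A}^1$-homology sheaves and on $\underline{\pi}_0^{\mathbb{A}^1}$.

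For stage (iii): both $M_\infty\times\ZZ$ and $R\Omega^1_s BM$ are group-like H-spaces --- the latter as a simplicial loop space, the former because passing to the telescope makes the induced monoid structure group-like --- hence $\mathbb{A}^1$-simple, and a map of such spaces inducing isomorphisms on $\underline{\pi}_0^{\mathbb{A}^1}$ and on $\mathbb{A}^1$-homology sheaves is an $\mathbb{A}^1$-weak equivalence by the motivic Hurewicz and Whitehead theorems. I expect the main obstacle to be carrying stages (i) and (iii) through with the stated generality of the base $S$: stating and proving the group completion theorem correctly in the unstable motivic category, keeping track of base points on each component and of the behaviour of the bar construction under sheafification and $\mathbb{A}^1$-localisation --- the gap that \cite{ST13} fills --- and, over a base that is not a field, deducing the homology-to-homotopy implication, likely by reduction to residue fields. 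One should in the end check that hypotheses (1)--(3) are exactly what is needed to run the sectionwise McDuff--Segal argument and then sheafify without losing control of the $\underline{\pi}_0$-action.
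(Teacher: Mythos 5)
Your three-stage outline (homological group completion, identify the telescope, upgrade to a weak equivalence) has the right shape, but there are two issues worth flagging: a misattribution and a genuine gap in stage (iii) that the paper closes by a different, much cleaner reduction.

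First, the misattribution. You write that hypothesis (2) is ``exactly the point at which \cite{MV99} was incomplete and which \cite{ST13} repairs.'' In fact, as the remark following the lemma states, commutativity of $(M,+,0)$ (your hypothesis (2)) was already in \cite[Thm.~4.1.10]{MV99}; the counterexample of \cite[Rem.~8.5]{ST13} shows that commutativity alone is insufficient, and hypothesis (3) --- the compatibility of stabilization with doubling --- is the new condition. Relatedly, in stage (iii) you assert that $M_\infty$ becomes group-like and H-space-like essentially by passing to the telescope, but what actually supplies the H-space structure on $M_\infty$ is hypothesis (3); this is exactly what the paper uses to conclude that $\pi_1(M_\infty)$ is abelian and acts trivially on higher homotopy, which is the input Whitehead's theorem needs.

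Second, and more seriously, stage (iii) of your argument appeals to ``motivic Hurewicz and Whitehead theorems'' to pass from a homology-sheaf isomorphism (plus $\underline{\pi}_0^{\mathbb{A}^1}$-isomorphism) to an $\mathbb{A}^1$-weak equivalence. You acknowledge this is problematic over a general base, proposing a reduction to residue fields, but no such reduction is carried out, and motivic Hurewicz/Whitehead over an arbitrary base $S$ is not available in the generality needed. The paper avoids this entirely by a reduction you are missing: first use \cite[Lem.~4.1.1]{MV99} to replace $M$ by a termwise free simplicial monoid, so $M^+\cong R\Omega^1_s BM$ on the nose; then use \cite[Lem.~4.1.7]{MV99} to take an $\mathbb{A}^1$-fibrant replacement $Ex_{\mathbb{A}^1}(M)$ that is still a monoid, and observe (using hypothesis (1)) that it is still $\mathbb{N}$-graded. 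After this replacement, hypotheses (2) and (3), which a priori hold only in $H_\bullet(S)$, hold up to \emph{simplicial} homotopy. It then suffices to show $M_\infty\times\ZZ\to M^+$ is a Nisnevich local weak equivalence, and since the Nisnevich site has enough points one takes stalks, landing in Kan complexes where the classical McDuff--Segal group completion theorem and the classical Whitehead theorem finish the job. No motivic Hurewicz, no homology sheaves, no reduction to residue fields. This reduction step is the real content of the paper's argument, and without it your stage (iii) does not close.
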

	Here $\underline{\pi}^{\mathbb{A}^1}_0(\mc{X})$ is just the sheafification of the presheaf of sets 
	\[U\mapsto [U,\mc{X}]_{\mathbb{A}^1}. \]
	\begin{proof}
		By \cite[Lem.~4.1.1]{MV99}, we can replace $M$ with a term wise free simplicial monoid and hence assume $M^+\cong R\Omega^1_sBM$.
		By \cite[Lem.~4.1.7]{MV99}, there is an $\mathbb{A}^1$-fibrant replacement functor $M\to Ex_{\mathbb{A}^1}(M)$ taking monoid objects to monoid objects. As $\underline{\pi}^{\mathbb{A}^1}_0(f): \underline{\pi}^{\mathbb{A}^1}_0(M)\to\mathbb{N}$ is an isomorphism, $Ex_{\mathbb{A}^1}(M)$ is also graded as \[\underline{\pi}^{\mathbb{A}^1}_0(Ex_{\mathbb{A}^1}(M))(U)\cong \pi_0(Ex_{\mathbb{A}^1}(M))(U).\]
		The morphism $M\to Ex_{\mathbb{A}^1}(M)$ induces an $\mathbb{A}^1$-weak equivalence of each graded component as they are disjoint and hence  $\mathbb{A}^1$-weak equivalences $R\Omega^1_sBM\iso R\Omega^1_s BEx_{\mathbb{A}^1}M$ and $M_\infty\iso Ex_{\mathbb{A}^1}(M)_\infty$ of homotopy colimits. Therefore, we can replace $M$ and $M_\infty$ with $Ex_{\mathbb{A}^1}(M)$ and $(Ex_{\mathbb{A}^1}(M))_\infty$ respectively and reduce to the situation where $M$ is $\mathbb{A}^1$-fibrant. Hence we can assume $(M,+,0)$ is commutative in the simplicial homotopy category $H^s(Sm_S)$ and the diagram in (3) commutes up to simplicial homotopy. Now we need to show that $M_\infty\times\ZZ\to M^+$ is a Nisnevich local weak equivalence of simplicial sheaves. As the Nisnevich site has enough points we use the stalk functors to reduce to the case where all objects are Kan complexes. The first two conditions then imply that the map $M_\infty\times\ZZ\to M^+$ , where $M^+$ is the group completion of the simplicial monoid $M$, is a homology isomorphism. Condition (3) implies that $M_\infty$ is an H-space and therefore $\pi_1(M_\infty)$ is abelian and acts trivially on all higher homotopy groups. The map is then a weak equivalence by Whitehead's theorem.
	\end{proof}
	\begin{remark}
		Condition 3 was not part of \cite[Thm.~4.1.10]{MV99}. As the counter-example in \cite[Remark 8.5]{ST13} shows, this additional condition is necessary.
	\end{remark}
	\begin{proof}[Proof of Theorem~\ref{iso_ksp}]
		Firstly by Corollary~\ref{th:BSp_symp2} we have $R\Omega^1_sB(\coprod_nBSp_{2n})\iso KSp^\perp$ in $H_\bullet(S)$. We will show that the graded sheaf of monoids $\coprod_nBSp_{2n}$ satisfies the conditions of Lemma~\ref{lem:grp_completion}. Condition (1) is clear. For (2) we need $\coprod_nBSp_{2n}$ to be commutative in $H_\bullet(S)$. Let $\Delta R$ be the simplicial ring with $\Delta R_n = R[t_0,\ldots,t_n]/(t_0+\ldots+t_n-1)$ and structure maps same as the topological simplex. Taking stalks we can reduce to the case of showing the simplicial monoid $\coprod_n Sing^{\mathbb{A}^1}(BSp_{2n})(R)=\coprod_n BSp_{2n}(\Delta R)$ is homotopy commutative for any ring $R$. Fixing $n,m\in\mathbb{N}$, there exists a permutation matrix $P_{n,m}\in GL_{2n+2m}(R)$ such that, \[\begin{pmatrix}
		A &0\\
		0 &B
		\end{pmatrix} =P_{n,m} \begin{pmatrix}
			B &0\\
			0 &A
		\end{pmatrix} P^{-1}_{n,m}\] 
		in $Sp_{2n+2m}(R)$ for any $A\in Sp_{2n}(R)$ and $B\in Sp_{2m}(R)$. From \cite[III.1.2.1]{Wei13} we see that $P_{n,m}$ is a product of elementary matrices $P_{n,m}=E_1\ldots E_k$. As these block matrices are of even rank, we can choose $E_i\in  Sp_{2n+2m}(R)$. Each of these $E_i$ can be written as the sum of the identity matrix and a nilpotent matrix $E_i=I+N_i$. The matrix $F=I+tN_i$ is then an element of $Sp_{2n+2m}(R[t])$ and gives us a path between $E_i$ and $I$ in $Sp_{2n+2m}(\Delta R)$.  Therefore, there is a simplicial homotopy between the maps $(A,B)\mapsto A\oplus B$ and $(A,B)\mapsto B\oplus A$ as functors $Sp_{2n}(\Delta R)\times Sp_{2m}(\Delta R)\to Sp_{2n+2m}(\Delta R)$. For (3) taking stalks again we need to show that for any ring $R$ and any $n$ the maps $Sp_{2n}(\Delta R)\times Sp_{2n}(\Delta R)\to Sp_{4n+4}(\Delta R)$ given by 
		\[(A,B)\mapsto A\oplus B\oplus \begin{pmatrix}
		0 &-1\\
		1 &0
		\end{pmatrix}\oplus \begin{pmatrix}
		0 &-1\\
		1 &0
		\end{pmatrix}\] and 
		\[
		(A,B)\mapsto A\oplus \begin{pmatrix}
		0 &-1\\
		1 &0
		\end{pmatrix}\oplus B\begin{pmatrix}
		0 &-1\\
		1 &0
		\end{pmatrix} \] 
		are homotopic. Like (2) these maps are equal up to conjugation by a permutation matrix and are simplicially homotopic.  Hence we get an isomorphism $\ZZ\times BSp_\infty\iso R\Omega^1_sB(\coprod_n BSp_{2n})$ in $H_\bullet(S)$. Finally, we have $HGr(2n,\infty)\iso BSp_{2n}$ from \cite[Sec 8.]{PW18} inducing $\ZZ\times HGr\iso \ZZ\times BSp_\infty$ in $H_\bullet(S)$. Note that Panin and Walter assume the underlying scheme is regular with $2$ invertible but the proof works for arbitrary schemes.
	\end{proof}
	 The above proof also shows that $\ZZ\times HGr$ is a unital commutative monoid object in $H_\bullet(S)$. Using results from \cite{AHW18} we can say more in the case of affine schemes.
	 \begin{theorem}\label{th:affKsp0}
	 	Let $S$ be ind-smooth over a Dedekind ring with perfect residue fields. Then, for any $X\in Sm^{aff}_S$ there is an isomorphism, 
	 	\[[X_+,\ZZ\times HGr]_{\mathbb{A}^1}\cong KSp^\perp_0(X) \]
	 	of groups. 
	 \end{theorem}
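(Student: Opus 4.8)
The plan is to replace $\ZZ\times HGr$ by the model $\ZZ\times BSp_\infty$ supplied by Theorem~\ref{iso_ksp}, to evaluate $[X_+,-]_{\mathbb{A}^1}$ one factor at a time using affine representability for the symplectic groups, and finally to recognise the answer as the Grothendieck group $KSp^\perp_0(X)$ by a purely algebraic group-completion argument; throughout, every identification respects the relevant commutative monoid (in fact group) structures, which are all induced by orthogonal sum of symplectic spaces. First, Theorem~\ref{iso_ksp} gives a natural bijection $[X_+,\ZZ\times HGr]_{\mathbb{A}^1}\cong[X_+,\ZZ\times BSp_\infty]_{\mathbb{A}^1}$, and since $[X_+,-]_{\mathbb{A}^1}$ preserves products this splits as $[X_+,\ZZ]_{\mathbb{A}^1}\times[X_+,BSp_\infty]_{\mathbb{A}^1}$, the first factor being the group $\ZZ^{\pi_0(X)}$ of locally constant $\ZZ$-valued functions on $X$. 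For the second factor, $BSp_\infty=\colim_n BSp_{2n}$ along the monomorphisms induced by the stabilizations $Sp_{2n}\into Sp_{2n+2}$, and since $X$ is smooth affine of finite type and $HGr$ is an ind-scheme, mapping out of $X_+$ commutes with this colimit (exactly as in \cite{PW18}): $[X_+,BSp_\infty]_{\mathbb{A}^1}\cong\colim_n[X_+,BSp_{2n}]_{\mathbb{A}^1}$, the transition maps being those induced by $V\mapsto V\perp H_-$ on symplectic spaces.

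The essential input is affine representability. Because $S$ is ind-smooth over a Dedekind ring with perfect residue fields and $Sp_{2n}$ is a split reductive group scheme, the results of \cite{AHW18} yield $[X_+,BSp_{2n}]_{\mathbb{A}^1}\cong H^1_{\mathrm{Nis}}(X,Sp_{2n})$ for every $X\in Sm^{aff}_S$ (and also underwrite the passage to the colimit $BSp_\infty$). Moreover, by \cite[Thm.~5.8]{Lam06} every symplectic space over a local ring is hyperbolic, so symplectic spaces are Zariski- and a fortiori Nisnevich-locally trivial; hence $H^1_{\mathrm{Nis}}(X,Sp_{2n})$ is precisely the set $M_n(X)$ of isometry classes of rank-$2n$ symplectic spaces over $X$. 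Combining the two paragraphs, $[X_+,\ZZ\times HGr]_{\mathbb{A}^1}\cong\ZZ^{\pi_0(X)}\times\colim_n M_n(X)$, the colimit taken along $V\mapsto V\perp H_-$, all compatibly with orthogonal sum. I expect the main obstacle to be precisely this invocation of \cite{AHW18}: one must check that $Sp_{2n}$ over the given (possibly non-Noetherian, non-field) base really meets the hypotheses of the affine representability theorem and that the theorem applies compatibly along the entire stabilization tower; granting that, the rest is formal.

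It remains to identify $\ZZ^{\pi_0(X)}\times\colim_n M_n(X)$ with $KSp^\perp_0(X)$, which by Definition~\ref{def:KSp_perp} is $\pi_0 K^\perp(i\mathbf{Symp}_{Sm_S}(X))$, namely the Grothendieck group $M(X)^{\mathrm{gp}}$ of the $\mathbb{N}$-graded commutative monoid $M(X)=\coprod_{n\ge 0}M_n(X)$ under $\perp$. Here Lemma~\ref{lem:symplectic-cofinality} is decisive: every symplectic space $V$ over $X$ is an orthogonal summand of some $H^n_-$, i.e.\ $V\perp V'\cong H^n_-$ for a suitable symplectic space $V'$ and integer $n$. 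Two consequences follow. First, $\colim_n M_n(X)$ is already a group, with the inverse of the class of $V$ represented by any such $V'$ (the classes of hyperbolic spaces are the basepoint of the colimit). Second, every stabilizing summand appearing in the group completion of $M(X)$ may be absorbed into a hyperbolic space, so that $[V]-[W]=[V']-[W']$ in $M(X)^{\mathrm{gp}}$ if and only if $V\perp W'$ and $V'\perp W$ become isometric after adding some $H^k_-$. Consequently the assignment sending $[V]-[W]$ to the pair whose first coordinate is the locally constant integer $\tfrac{1}{2}(\mathrm{rk}(V)-\mathrm{rk}(W))$ and whose second coordinate is the image of $[V]-[W]$ in the group $\colim_n M_n(X)$ is a well-defined homomorphism $M(X)^{\mathrm{gp}}\to\ZZ^{\pi_0(X)}\times\colim_n M_n(X)$; it is surjective because every class of $\colim_n M_n(X)$ is represented by an honest symplectic space and hyperbolic spaces are trivial there, and injective because if $[V]-[W]$ has trivial image in both factors then, after adding hyperbolics to make $V$ and $W$ have equal rank, vanishing in $\colim_n M_n(X)$ forces $V\perp H^k_-\cong W\perp H^k_-$ for some $k$, so $[V]-[W]=0$. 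This isomorphism is natural in $X$, and composing it with the bijections above gives the desired natural isomorphism of groups $[X_+,\ZZ\times HGr]_{\mathbb{A}^1}\cong KSp^\perp_0(X)$.
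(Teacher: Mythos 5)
Your proof is correct and takes essentially the same approach as the paper: affine representability from \cite{AHW18} gives $[X_+,BSp_{2n}]_{\mathbb{A}^1}$ as isometry classes of rank-$2n$ symplectic spaces, the colimit commutes against the compact object $X_+$, and Lemma~\ref{lem:symplectic-cofinality} drives the identification with the Grothendieck group $KSp^\perp_0(X)$. The only cosmetic difference is that you construct the comparison map in the direction $KSp^\perp_0(X)\to\ZZ^{\pi_0(X)}\times\colim_n M_n(X)$ while the paper constructs its inverse $(i,[A])\mapsto[A]-(\tfrac{\mathrm{rank}(A)}{2}-i)[H_-]$.
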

 	\begin{proof}
 		By \cite[Ex.~2.3.4]{AHW18} we have for each $n$, bijections of sets
 		\[[X_+, BSp_{2n}]_{\mathbb{A}^1}\cong \pi_0(B_{Nis}Sp_{2n})(X)\cong Symp_n(X) \]
 		where $Symp_n(X)$ is the set of isometry classes of rank $2n$ symplectic spaces over $X$ and $B_{Nis}Sp_{2n}$ is a Nisnevich fibrant replacement of $BSp_{2n}$. As the filtered colimit $BSp_\infty=\colim_{n}BSp_{2n}$ is also the homotopy colimit, the sheaf $\underline{\ZZ}\times \colim_n B_{Nis}Sp_{2n}$ is a Nisnevich fibrant replacement of $\ZZ\times BSp_{\infty}$ giving,
 		\[[X_+, \ZZ\times BSp_{\infty}]_{\mathbb{A}^1}\cong\underline{\ZZ}(X)\times\colim_{n} \pi_0(B_{Nis}Sp_{2n}(X))\cong \underline{\ZZ}(X)\times \colim_{n}Symp_n(X) \]
 		as sets. There is a map $\underline{\ZZ}(X)\times \colim_{n}Symp_n(X)\to KSp^\perp_0(X)$ which is given by \[(i,[A])\mapsto [A]-(\frac{rank(A)}{2}-i)[H_-]\] on each connected open subscheme of $X$. As the monoid structure on $\ZZ\times BSp_\infty$ is induced by $\perp$, this is a monoid homomorphism. We will show that this map is a bijection. First we note that it is enough to prove this in the case when $X$ is connected as all our schemes are locally connected. Given any $[A]-[B]\in KSp^\perp_0(X)$, by Theorem~\ref{th:symp_subspace}, there exists a symplectic space $C$ such that $B\perp C\cong H^k_-$ for some $k$. Then, $[A]-[B]=[A]+[C]-(k[H_-])=[A\perp C]-k[H_-]$ and hence the map is surjective. Now suppose $[A]-k[H_-]=[B]-j[H_-]$ in $KSp^\perp(X)$. Then, $A\perp H^{j+p}_-\cong B\perp H_-^{k+p}$ for some $p$. Hence they have the same preimage in $\ZZ\times \colim_{n}Symp_n(X)$ and this is a group isomorphism.    
 	\end{proof}
	 \begin{theorem}\label{th:affKsp}
	 	Let $S$ be ind-smooth over a Dedekind ring with perfect residue fields. Then, for any affine $\Spec(R)\cong X\in Sm^{aff}_S$ there are isomorphisms
	 		\[[S^n\wedge X_+,\ZZ\times HGr]_{\mathbb{A}^1}\cong \pi_n Sing^{\mathbb{A}^1}(KSp^{\perp}(X)) \]
	 		for all $n\geq 0$. In particular, this holds over $\Spec(\ZZ)$.
 	\end{theorem}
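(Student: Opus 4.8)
The plan is to bootstrap from the case $n=0$, which is Theorem~\ref{th:affKsp0}, together with affine representability for the finite symplectic groups, and to transport the answer along the equivalences of Theorem~\ref{iso_ksp}. So fix $n\ge 1$. By Theorem~\ref{iso_ksp} we may replace $\ZZ\times HGr$ by $\ZZ\times BSp_\infty=\ZZ\times\colim_m BSp_{2m}$ in $H_\bullet(S)$, and since $S^n\wedge X_+$ is a reduced suspension the constant summand $\ZZ$ contributes nothing, so $[S^n\wedge X_+,\ZZ\times HGr]_{\mathbb{A}^1}\cong[S^n\wedge X_+,BSp_\infty]_{\mathbb{A}^1}$. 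As $X$ is a smooth affine $S$-scheme, $S^n\wedge X_+$ is a compact object of $H_\bullet(S)$, and $BSp_\infty=\colim_m BSp_{2m}$ is a filtered (hence homotopy) colimit, so
\[[S^n\wedge X_+,\ZZ\times HGr]_{\mathbb{A}^1}\;\cong\;\colim_m\,[S^n\wedge X_+,BSp_{2m}]_{\mathbb{A}^1}.\]

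Next I would feed in affine representability. Each $Sp_{2m}$ is a split — hence isotropic — reductive group scheme over $\Spec(\ZZ)$, so over a base $S$ that is ind-smooth over a Dedekind ring with perfect residue fields the affine representability results of \cite{AHW18} apply: $Sing^{\mathbb{A}^1}(B_{Nis}Sp_{2m})$ is $\mathbb{A}^1$-local on $Sm^{aff}_S$, whence $[S^n\wedge X_+,BSp_{2m}]_{\mathbb{A}^1}\cong\pi_n\,Sing^{\mathbb{A}^1}(B_{Nis}Sp_{2m})(X)$ for all $n$ (the case $n=0$ being \cite[Ex.~2.3.4]{AHW18}). Since $Sing^{\mathbb{A}^1}$, evaluation at $X$, $\pi_n$, and Nisnevich fibrant replacement up to Nisnevich-local equivalence all commute with filtered colimits, taking the colimit over $m$ gives
\[[S^n\wedge X_+,\ZZ\times HGr]_{\mathbb{A}^1}\;\cong\;\pi_n\,Sing^{\mathbb{A}^1}\!\Big(\colim_m B_{Nis}Sp_{2m}\Big)(X),\]
where $\colim_m B_{Nis}Sp_{2m}$ is a Nisnevich-local model of $BSp_\infty$.

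It remains to identify this with $\pi_n\,Sing^{\mathbb{A}^1}(KSp^\perp)(X)$. By Theorem~\ref{th:BSp_symp} and Nisnevich descent for the stack of symplectic spaces, $B_{Nis}Sp_{2m}$ is the nerve of the stack of rank-$2m$ symplectic spaces, so $\colim_m B_{Nis}Sp_{2m}$ is the nerve of the stabilized symplectic groupoid; evaluated at a ring $A$ its basepoint component is $B(Sp_\infty(A))$, whereas by the group-completion theorem and the cofinality of $\{H^m_-\}$ (Lemma~\ref{lem:symplectic-cofinality}, Theorem~\ref{th:symp_subspace}) the basepoint component of $KSp^\perp(A)=K^\perp(i\mathbf{Symp}(A))$ is Quillen's plus construction $BSp_\infty(A)^+$. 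Both $\ZZ\times Sing^{\mathbb{A}^1}(\colim_m B_{Nis}Sp_{2m})(X)$ and $Sing^{\mathbb{A}^1}(KSp^\perp)(X)$ are commutative $H$-spaces whose $\pi_0$ is the group $KSp^\perp_0(X)$ — this, and their agreement on $\pi_0$, is Theorem~\ref{th:affKsp0} (using also the $\mathbb{A}^1$-invariance of $Symp_m$ and of $KSp^\perp_0$ on smooth affines) — so they are grouplike, and it suffices to check that the canonical comparison map coming from the chain of equivalences of Lemma~\ref{lem:grp_completion} and Corollary~\ref{th:BSp_symp2} is an isomorphism on $\pi_n$ at the basepoint, i.e.\ that
\[\big|\,j\mapsto B(Sp_\infty(\mathcal{O}(X\times\Delta^j)))\,\big|\;\longrightarrow\;\big|\,j\mapsto BSp_\infty(\mathcal{O}(X\times\Delta^j))^+\,\big|\]
is a weak equivalence. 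The homotopy fibre of this map is the realization of a levelwise acyclic simplicial space which, because the elementary symplectic group $ESp_\infty$ is $\mathbb{A}^1$-connected, is simply connected, hence contractible; so the map is a weak equivalence. Together with Theorem~\ref{th:affKsp0} for $\pi_0$, this completes the proof for all $n\ge 0$.

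I expect this last identification to be the main obstacle. Because neither $\mathbb{A}^1$-invariance nor Nisnevich excision is known for symplectic $K$-theory when $2$ is not invertible, one cannot invoke a representability theorem for $KSp^\perp$ directly; the argument is forced through the $B_{Nis}Sp_{2m}$-model — where AHW's affine representability holds and the hypotheses on $S$ are used — and one must then check carefully that $Sing^{\mathbb{A}^1}$ and realization over $\Delta^\bullet$ interact compatibly with the bar construction, the group completion $R\Omega^1_sB$, and the filtered colimit over $m$, the final reconciliation of the sectionwise $1$-truncated classifying space $BSp_\infty$ with the group completion $KSp^\perp$ resting on the $\mathbb{A}^1$-connectivity of $ESp_\infty$ and on the behaviour of realization on simplicial fibre sequences.
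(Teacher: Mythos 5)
Your strategy up through the identification $[S^n\wedge X_+,\ZZ\times HGr]_{\mathbb{A}^1}\cong\pi_n\,Sing^{\mathbb{A}^1}(\underline{\ZZ}\times\colim_m B_{Nis}Sp_{2m})(X)$ is essentially the paper's: pass from $\ZZ\times HGr$ to $\ZZ\times BSp_\infty$ via Theorem~\ref{iso_ksp}, use the AHW affine-representability package for the finite symplectic groups (the paper combines \cite[Thm.~4.1.2]{AHW18} with \cite[Prop.~4.1.16]{MV99} rather than invoking compactness of $S^n\wedge X_+$ against the filtered colimit, but the effect is the same), then feed in Theorem~\ref{th:affKsp0} for $\pi_0$.

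The final identification with $\pi_n\,Sing^{\mathbb{A}^1}(KSp^\perp(X))$ is where you diverge and where the argument has a genuine gap. You assert that the homotopy fibre of the realization $\big|\,j\mapsto B(Sp_\infty(\mathcal{O}(X\times\Delta^j)))\,\big|\to\big|\,j\mapsto BSp_\infty(\mathcal{O}(X\times\Delta^j))^+\,\big|$ is the realization of the levelwise acyclic fibres. That interchange of homotopy fibre with realization fails in general for simplicial spaces unless a $\pi_*$-Kan / Bousfield--Friedlander type hypothesis is verified, which you do not check --- and there is no reason to expect it here, since the levelwise plus-construction maps are far from being fibrations and the fundamental groups vary across the simplicial direction. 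The subsidiary input that $ESp_\infty$ is $\mathbb{A}^1$-connected is also asserted rather than proved (it can be extracted from the elementary-matrix homotopies in the proof of Theorem~\ref{iso_ksp}, but you would need to say so).

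The paper avoids both issues by staying at the level of homology. Realization of simplicial spaces always preserves homology isomorphisms, so one needs only two facts already available: for affine $X$ the map $KSp^\perp_0(X)\times BSp_\infty(X)\to KSp^\perp(X)$ is a topological group completion (cofinality, Theorem~\ref{th:symp_subspace}), and $Sing^{\mathbb{A}^1}BSp_\infty(X)$ is a grouplike H-space (the homotopy-commutativity and H-space structure verified in the proof of Theorem~\ref{iso_ksp}). A homology isomorphism between grouplike H-spaces is a weak equivalence, so the identification goes through without any control of homotopy fibres or of the $\pi_*$-Kan condition. You should replace the acyclic-fibre step with this group-completion argument.
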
 
 	\begin{proof}
 		By \cite[Thm.~4.1.2]{AHW18}, $B_{Nis}Sp_{2k}$ is $\mathbb{A}^1$-naive and by \cite[Prop.~4.1.16]{MV99} the map $BSp_{2k}\to B_{Nis}Sp_{2k}$ induces an isomorphism of groups $\pi_nBSp_{2k}(U)\iso\pi_kB_{Nis}Sp_{2k}(U)$ for all $U\in Sm_S$ and all $n\geq 1 $. Therefore, the morphism of colimtis $BSp_{\infty}\to \colim_{k} B_{Nis}Sp_{2k}$ also induces an isomorphism on all higher homotopy groups. By Theorem~\ref{th:affKsp0}, we have a group isomorphism $\pi_0\underline{\ZZ}\times \colim_{k} B_{Nis}Sp_{2k}(X)\cong KSp_0(X)$ for every $X\in Sm^{aff}_S$. As $\pi_0BG=*$ for any group, there is a weak equivalence $KSp^\perp_0(X)\times BSp_\infty(X)\to \underline{\ZZ}(X)\times \colim_{k}B_{Nis}Sp_{2k}(X)$. We then have bijections,
 		\begin{multline*}
 		[S^n\wedge X_+,\ZZ\times BSp_\infty]_{\mathbb{A}^1}\cong [S^n\wedge X_+,\underline{\ZZ}\times \colim_k B_{Nis}Sp_{2k}]_{\mathbb{A}^1}\\\cong \pi_n Sing^{\mathbb{A}^1}(\underline{\ZZ}(X)\times \colim_kB_{Nis}Sp_{2k}(X) ) 
 		\end{multline*}
 		for all $n\geq 0$. The map of spaces $KSp^\perp_0(X)\times BSp_\infty(X)\to KSp^\perp(X)$ is a topological group completion, when $X$ is affine, by Theorem~\ref{th:symp_subspace}. Further, from the proof of Theorem~\ref{iso_ksp} we have that $Sing^{\mathbb{A}^1}BSp_\infty(X)$ is a grouplike H-space. Putting these together we get that the map $Sing^{\mathbb{A}^1} (KSp^\perp_0(X)\times BSp_\infty(X))\to Sing^{\mathbb{A}^1}(KSp^\perp(X))$ is a levelwise weak equivalence. Therefore,
 		\[\pi_nSing^{\mathbb{A}^1}(\underline{\ZZ}\times \colim_kB_{Nis}Sp_{2k})\cong \pi_nSing^{\mathbb{A}^1}(KSp^\perp(X)) \]
 		and hence we have
 		\[[S^n\wedge X_+,\ZZ\times BSp_\infty]_{\mathbb{A}^1}\cong \pi_nSing^{\mathbb{A}^1}(KSp^\perp(X)) \]
 		thus completing the proof.
 	\end{proof}
	We have a stronger result in the case when $\half\in\Gamma(S,\mc{O}_S)$.
	\begin{theorem}
		Let $S$ be a regular Noetherian scheme of finite Krull dimension with $\half\in\Gamma(S,\mc{O}_S)$. For any $X\in Sm_S$ and for all $n\in\mathbb{N}$,
		\[ [S^n\wedge X_+,\ZZ\times HGr]_{\mathbb{A}^1}\cong KSp^{[0]}_n(X).\]
	\end{theorem}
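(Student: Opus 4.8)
The plan is to reduce the statement to Theorem~\ref{ko:unstable_rep} by identifying $\ZZ\times HGr$ with a Grothendieck--Witt space in $H_\bullet(S)$. Setting $n=k=0$ in the periodicity isomorphisms~\eqref{spo_iso} gives $KSp^{[0]}_n(X)\cong KO^{[2]}_n(X)$ for every $X\in Sm_S$, and Theorem~\ref{ko:unstable_rep}, which applies since $2\geq 0$ and $S$ is regular Noetherian of finite Krull dimension with $\half$ invertible, identifies the latter group with $[S^n\wedge X_+,KO^{[2]}]_{\mathbb{A}^1}$. Thus it suffices to produce a pointed isomorphism $\ZZ\times HGr\cong KO^{[2]}$ in $H_\bullet(S)$. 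By Theorem~\ref{iso_ksp} we already have $\ZZ\times HGr\cong KSp^\perp$ in $H_\bullet(S)$ over any Noetherian scheme of finite Krull dimension, so the task collapses to the comparison $KSp^\perp\cong KO^{[2]}$ in $H_\bullet(S)$.

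For the latter I would use the canonical map of simplicial presheaves on $Sm_S$ from $KSp^\perp=K^\perp(i\mathbf{Symp}_{Sm_S}(-))$ --- the group completion of the symmetric monoidal groupoid of symplectic spaces, i.e.\ the naive symplectic Grothendieck--Witt presheaf --- to Schlichting's $KSp^{[0]}$, composed with the natural periodicity equivalence $KSp^{[0]}\iso KO^{[2]}$ from~\eqref{spo_iso}; call the resulting natural transformation $\eta\colon KSp^\perp\to KO^{[2]}$. The main step is to show $\eta$ is a local weak equivalence for the Nisnevich topology, for then it is an isomorphism in the local homotopy category and a fortiori in $H_\bullet(S)$. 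Since the Nisnevich site has enough points, I would check this stalkwise: continuity of algebraic $K$-theory and of Grothendieck--Witt theory under filtered colimits of rings identifies the stalks of $KSp^\perp$ and $KO^{[2]}$ at a point $(U,u)$ with $KSp^\perp(\mc{O}^h_{U,u})$ and $KO^{[2]}(\mc{O}^h_{U,u})$ respectively. The ring $\mc{O}^h_{U,u}$ is Henselian local, regular (being the Henselization of a local ring of the smooth $S$-scheme $U$, with $S$ regular) and has $2$ invertible; hence $\Spec(\mc{O}^h_{U,u})$ falls within the scope of the affine comparison theorem of Section~\ref{sec:HKT}, which says exactly that $KSp^\perp(\mc{O}^h_{U,u})\to KSp^{[0]}(\mc{O}^h_{U,u})$ is a weak equivalence; together with~\eqref{spo_iso} this makes $\eta$ a weak equivalence on every stalk.

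The step I expect to require the most care is the construction and analysis of $\eta$: identifying the ``$K^\perp$ of a symmetric monoidal groupoid'' model of $KSp^\perp$ with the naive Grothendieck--Witt construction functorially enough to map it to Schlichting's $KSp^{[0]}$, verifying that both presheaves are continuous (so that stalks really are the values on Henselizations), and checking that a Henselization of a regular local ring with $\half$ invertible is again of that form --- after which the chain~\eqref{spo_iso}, Theorem~\ref{iso_ksp}, Theorem~\ref{ko:unstable_rep} closes the argument. All the equivalences involved carry the respective zero objects to one another, so the identifications hold in the pointed category $H_\bullet(S)$ and are compatible with smashing with $S^n$ and $(-)_+$.
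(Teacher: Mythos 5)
Your proof is correct and takes essentially the same route as the paper: the paper's one-line proof invokes Theorems~\ref{ko:unstable_rep} and \ref{iso_ksp}, silently using the periodicity isomorphisms~\eqref{spo_iso} and the affine comparison $KSp^\perp(\Spec R)\iso KSp^{[0]}(\Spec R)$ for $\half\in R$ to identify $\ZZ\times HGr\cong KSp^\perp\cong KO^{[2]}$ in $H_\bullet(S)$. You simply make that middle identification explicit (and in fact a sectionwise equivalence over all affine smooth $S$-schemes already suffices, so the stalkwise/Henselization detour, while correct, is more than is strictly needed).
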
	
	\begin{proof}
          The statement follows from Theorems~\ref{ko:unstable_rep} and \ref{iso_ksp}.
        \end{proof}
        
	\section{Hermitian K-theory spectrum} 
	Recall from Section~\ref{sec:HKT} that for any scheme $S$ we have simplicial presheaves $KO^{[n]}:Sm^{op}_S\to \mathbf{sSet}$. When $S$ is a regular Noetherian scheme of finite Krull dimension with $\half\in\Gamma(\mc{O}_S,S)$, Panin and Walter showed that there is a motivic $T$-spectrum $\mathbf{KO}=(KO^{[0]},KO^{[1]},KO^{[2]},\ldots)$, where $T=\mathbb{A}^1/\mathbb{A}^1-0$ \cite[Sec. 7]{PW18}. We will recall this construction below. First, we need the fact that for any exact category with strict duality $(\mc{E},*,\eta)$ we can define a family of exact category with weak equivalence and duality structures on the category of bounded chain complexes $Ch^b(\mc{E})$. 
	\begin{definition}[Shifted dualities]
		For $n\in\ZZ$, we define the functor $\eta^{n}:Ch^b(\mc{E})\to Ch^b(\mc{E})^{op}$ to be given by $E.\mapsto (E.)^*[n]$. Where $(E.)^*$ is the chain complex $(E.)_i^*=E^*_{-i}$. That is,
		\[(E^*,d^*):\quad \ldots E_{-i+1}^*\xrightarrow{d^*_{-i}} E^*_{-i}\xrightarrow{d^*_{-i-1}} E^*_{-i-1}\to\ldots  \]
		$[n]$ is the usual shift functor $E[n]_i=E_{i-n}$. In particular $(d^{*^n})_i=(d_{-i-1-n})^*$. Let $\eta^{n}:(-)\Rightarrow(-)^{*^{n}*^{n}}$ be the natural transformation given by \[(\eta_{E}^n)_i=(-1)^{\frac{n(n-1)}{2}}\eta_{E_i}\quad E.\in Ch^b(\mc{E})\]
	\end{definition}
	The pairs $(*^{n},\eta^{n})$, give us exact categories with weak equivalences and duality
	\[(Ch^b(\mc{E}),*^n,\eta^n,q)\]  
	for each $n\in\ZZ$. We will apply this construction to $\mc{E}=Vect(X)$ and $*^n=\vee^n$ given by $\vee^n:E\mapsto E^\vee[n]$.
	\begin{definition}[Koszul complex]
		Let $p:E\to X$ be a vector bundle of rank $n$. The pullback $p^*E=E\times_X E\to E$ is a vector bundle over $E$ (as a scheme) and has a section $s:E\to E\times_X E$ given by the diagonal map. The \emph{Koszul complex} $\kappa(E)$ is the chain complex of vector bundles over $E$ given by
		\[\kappa(E):(0\to \Lambda^np^*E^\vee\to \Lambda^{n-1}p^*E^\vee\to\ldots\to\Lambda^2p^*E^\vee\to p^*E^\vee\to\mc{O}_E\to 0) \]
		with grading $\kappa(E)_i=\Lambda^{n-i}E^\vee$ and differentials $d:\Lambda^{k+1}p^*E^\vee\to \Lambda^kp^*E^\vee$ given by 
		\[d(x_0\wedge x_1\wedge\ldots\wedge x_k)=\sum^{n}_{i=0}(-1)^i s^*(x_i)x_0\wedge\ldots\wedge\hat{x}_i\wedge\ldots\wedge x_k  \] 
		where $s^*$ is the dual of the section $s:\mc{O}_E\to p^*E$.
	\end{definition}  
	The canonical isomorphism $\Lambda^r p^*E\iso (\Lambda^{n-r} p^*E)^\vee\otimes\Lambda^n p^*E$ induces an isomorphism of chain complexes $\theta(E):\kappa(E)\iso \kappa(E)^\vee\otimes\Lambda^n p^*E[n]$. Given an isomorphism $\lambda:det E=\Lambda^nE\iso \mc{O}_X$, this gives us a non-degenerate symmetric form in $Ch^b(Vect(E),\vee^n,\eta^n,q)$,
	\[\kappa(E,\lambda):\kappa(E)\iso \kappa(E)^\vee\otimes\Lambda^n p^*E[n]\iso \kappa(E)^\vee[n] \]
	where we choose the sign of the isomorphisms $\Lambda^r p^*E\iso (\Lambda^{n-r} p^*E)^\vee\otimes\Lambda^n p^*E$ to be compatible with the natural transformation $\eta^n$. Let $Vect(E)^{q_{E-X}}$ be the full subcategory of $Vect(E)$ generated by complexes which are acyclic when restricted to the open subscheme $E-X$. It follows that $\kappa(E,\lambda)$ is an object of $Vect(E)^{q_{E-X}}$.  
	Given a pair $(E,\lambda)$, where $p:E\to X$ is a rank $n$ vector bundle and $\lambda:det E=\Lambda^n E\iso \mc{O}_X$ an isomorphism between the determinant bundle and the trivial bundle, the Thom class $th(E,\lambda)=[(\kappa(E,\lambda)]$ is the corresponding element in $KO^{[n]}_0(E,E-X)$. 
	From this we see that for every pair $(E,\lambda)$, where $E$ is a vector bundle and $\lambda:\Lambda^n E\iso \mc{O}_X$, we have a functor
	\[Ch^b(Vect(X),\vee^m,\eta^n,q)\to Ch^b(Vect(E)^{q_{E-X}},\vee^{m+n},\eta^n,q)\] 
	which on objects is given by tensoring with $\kappa(E)$,
	\[C.\mapsto p^*C.\otimes \kappa(E).\]
	This induces a map of spaces $KO^{[m]}(X)\to KO^{[m+n]}(E,E-X)$ which we denote by $\otimes th(E,\lambda)$. 
	\begin{theorem}\label{th:thom_iso}
		Let $X\in Sm_S$ with $S$ a regular Noetherian scheme of finite Krull dimension with $\half\in\Gamma(S,\mc{O}_S)$. For any pair $(E,\lambda)$ described above, the map $\otimes th(E,\lambda):KO^{[m]}(X)\to KO^{[m+n]}(E,E-X)$ is a weak equivalence of spaces.
	\end{theorem}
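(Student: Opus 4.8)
The plan is to run the classical argument by which an orientation class produces a Thom isomorphism, in three stages: a Mayer--Vietoris reduction to a trivial bundle; a reduction, via the multiplicativity of Koszul complexes, to a trivial line bundle; and a direct treatment of the rank one case using Schlichting's localisation and d\'evissage theorems.

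For the first stage, I regard $U\mapsto KO^{[m]}(U)$ and $U\mapsto KO^{[m+n]}(E|_U,\,E|_U-U)$ as simplicial presheaves on the small Zariski site of $X$. The former satisfies Nisnevich, hence Zariski, descent, and the latter sits in a homotopy fibre sequence with the Grothendieck--Witt presheaves of the total space of $E|_U$ and of its complement, so it does too; the Koszul construction is natural in $U$, so $\otimes th(E,\lambda)$ is a map of presheaves. Since $X$ is Noetherian it has a finite Zariski cover trivialising $E$, and a Mayer--Vietoris induction over this cover — each term of which is a smooth $S$-scheme on which $E$ restricts to a trivial bundle — reduces the theorem to the case $E\cong\mathcal{O}_X^{\oplus n}$, at which point $\lambda$ may also be normalised, since two trivialisations of $\det\mathcal{O}_X^{\oplus n}$ differ by a global unit absorbed by rescaling a coordinate.

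For the second stage, write $\mathcal{O}_X^{\oplus n}=\mathcal{O}_X^{\oplus(n-1)}\oplus\mathcal{O}_X$ and let $q\colon Y\to X$ be the total space of the first summand, so that the total space of $\mathcal{O}_X^{\oplus n}$ over $X$ is that of the trivial line bundle $\mathbb{A}^1_Y$ over $Y$. Since the Koszul complex of a direct sum is the tensor product of the pulled-back Koszul complexes of the summands, compatibly with the symmetric forms coming from the determinant trivialisations and with no determinant twist (both summands having trivial determinant), the functor $C\mapsto p^*C\otimes\kappa(E)$ is the composite of the rank $n-1$ Koszul construction over $X$ and the rank one Koszul construction over $Y$. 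The first is a weak equivalence by the inductive hypothesis; the second, being natural in its base, is a weak equivalence by the inductive hypothesis and remains one after passing to Grothendieck--Witt spaces with support along the zero section $X\into Y$, which is a routine application of Schlichting's localisation and excision theorems. Hence the rank $n$ Thom map is a composite of weak equivalences, and an induction on $n$ reduces everything to the rank one case.

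For the rank one case, the task is to show $\otimes th(\mathbb{A}^1_X,\lambda)\colon KO^{[m]}(X)\to KO^{[m+1]}(\mathbb{A}^1_X,\mathbb{A}^1_X-X)$ is a weak equivalence; here $\mathbb{A}^1_X-X=\mathbb{G}_{m,X}$, and by localisation the target is the Grothendieck--Witt space of perfect complexes on $\mathbb{A}^1_X$ acyclic on $\mathbb{G}_{m,X}$ with the shifted duality. The Koszul complex $\kappa(\mathbb{A}^1_X)$ is the two-term complex $\mathcal{O}\xrightarrow{\,t\,}\mathcal{O}$, the standard resolution of the pushforward $i_*\mathcal{O}_X$ along the zero section $i\colon X\into\mathbb{A}^1_X$, so $C\mapsto p^*C\otimes\kappa(\mathbb{A}^1_X)$ is naturally the derived pushforward. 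Homotopy invariance of Grothendieck--Witt spaces for regular schemes with $\half$ — applicable since $X$ and $\mathbb{A}^1_X$ are regular with $\half$ — together with the d\'evissage identification of $KO^{[m+1]}(\mathbb{A}^1_X,\mathbb{A}^1_X-X)$ with $KO^{[m]}(X)$ twisted by $\det N_{X/\mathbb{A}^1_X}=\mathcal{O}_X$ (hence untwisted) then identifies $\otimes th(\mathbb{A}^1_X,\lambda)$ with this equivalence. The hard part is exactly this identification at the hermitian level: one must verify that the symmetric form $\kappa(\mathbb{A}^1_X,\lambda)$ built from $\theta$ and $\lambda$, with the sign chosen compatible with $*^{m+1}$, agrees with the form that purity attaches to $i_*\mathcal{O}_X$. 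Should a hermitian purity statement in the needed generality not be available, this rank one case can instead be carried out by hand, following \cite{PW18}, by analysing the localisation fibre sequence $KO^{[m+1]}(\mathbb{A}^1_X,\mathbb{G}_{m,X})\to KO^{[m+1]}(\mathbb{A}^1_X)\to KO^{[m+1]}(\mathbb{G}_{m,X})$ and the Bass-type splitting of $KO^{[m+1]}(\mathbb{G}_{m,X})$.
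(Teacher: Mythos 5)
The paper disposes of this statement in one line, by citing it as a result already established in Panin--Walter (the reference points to \cite[Thm.~5.1]{PW18}, where the Thom isomorphism is proved as part of their representability argument). Your proposal instead reconstructs the proof from scratch: Mayer--Vietoris/Zariski-descent reduction to the trivial bundle, a rank-induction via the multiplicativity $\kappa(E_1\oplus E_2)\cong q_1^*\kappa(E_1)\otimes q_2^*\kappa(E_2)$ of Koszul complexes, and a direct analysis of the rank-one case. This is a genuinely different mode of argument even though the underlying content is essentially the same as what Panin--Walter prove; what you gain is transparency about exactly which inputs from hermitian $K$-theory (Nisnevich descent, localization, homotopy invariance for regular schemes with $\half$, the Bass/d\'evissage analysis in rank one) actually drive the theorem.

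The structural reductions in your argument are sound. Descent of the target presheaf $U\mapsto KO^{[m+n]}(E|_U,E|_U-U)$ does follow from the localization fibre sequence and descent for the two outer terms, since homotopy fibres commute with the relevant homotopy limits, and the rank induction correctly threads supports: the composite $KO^{[m]}(X)\to KO^{[m+n-1]}_X(Y)\to KO^{[m+n]}_X(\mathbb{A}^1_Y)$ matches the full Thom map because $\mathbb{A}^1_Y=\mathbb{A}^n_X$ and the fibre of $KO^{[m+n]}_Y(\mathbb{A}^1_Y)\to KO^{[m+n]}_{Y-X}(\mathbb{A}^1_{Y-X})$ is indeed $KO^{[m+n]}_X(\mathbb{A}^1_Y)$. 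The one place your write-up does not actually close the argument is the rank-one case, and you say so yourself: you either need a hermitian d\'evissage/purity statement identifying $KO^{[m+1]}(\mathbb{A}^1_X,\mathbb{G}_{m,X})$ with $KO^{[m]}(X)$ in a form that visibly sends $i_*\mathcal{O}_X$ with the Koszul form to the unit, or you must carry out the Bass-type computation of $KO^{[m+1]}(\mathbb{G}_{m,X})$ and chase the localization sequence. Both routes are in the literature (Schlichting's localization and d\'evissage for Grothendieck--Witt theory, and the Bass fundamental theorem proved there and reused in \cite{PW18}), but as written the proposal invokes them without verifying that the specific chain-level symmetric form $\kappa(\mathcal{O}_X,\mathrm{id})$ is carried to the identity class; that compatibility is exactly the non-formal content. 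So: correct architecture, and a conscious, literature-fillable gap at the rank-one base case rather than a wrong step.
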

	\begin{proof}
          This is a corollary of \cite[Thm.~5.1]{PW18}, which contains the
          hypotheses on regularity and invertibility of $2$.
        \end{proof}

        For any scheme $X$, let $E=\mc{O}_X$ be the trivial bundle with $\lambda=id$, $\kappa(E,\lambda)$ is then given by,
	\[
	\begin{tikzcd}
	0\arrow[r] &\mc{O}_{\mathbb{A}_X^1}\arrow[r,"t"]\arrow[d,"-1"] &\mc{O}_{\mathbb{A}_X^1}\arrow[r]\arrow[d,"1"] &0\\
	0\arrow[r] &\mc{O}_{\mathbb{A}_X^1}\arrow[r,"-t"] &\mc{O}_{\mathbb{A}_X^1}\arrow[r] &0
	\end{tikzcd}
	\] 
	where $t$ is the variable in $\mathbb{A}_{\Spec(R)}^1\cong \Spec(R[t])$. We then have an induced equivalence of simplicial sets,
	\[
	\otimes th(\mc{O},id):KO^{[n]}(X)\iso KO^{[n+1]}(\mathbb{A}^1\times X,(\mathbb{A}^1\setminus \{0\})\times X)
	\]
	which is functorial on $X$. Therefore, there is a levelwise weak equivalence of simplicial presheaves $KO^{[n]}(-)\iso KO^{[n+1]}(\mathbb{A}^1\times-,(\mathbb{A}^1\setminus \{0\})\times-)$. If $KO^{[n]}_f$ is an $\mathbb{A}^1$-fibrant replacement of $KO^{[n]}$, we have a zigzag of $\mathbb{A}^1$-weak equivalences
	\begin{multline*}
	\mathbf{Hom}_{sPSh_\bullet(S)}(-\wedge T,KO^{[n]}_f)\xleftarrow{\sim} \mathbf{Hom}_{sPSh_\bullet(S)}(-\wedge T,KO^{[n]})\\ \iso KO^{[n]}(\mathbb{A}^1\times-,(\mathbb{A}^1\setminus \{0\})\times-)
	\end{multline*}
	for each $n$. As $\mathbf{Hom}_{sPSh_\bullet(S)}(-\wedge T,KO^{[n]}_f)$ is fibrant,
        the zigzag lifts to a weak equivalence of simplicial presheaves 
	\[KO^{[n]}_f(-)\iso \mathbf{Hom}_{sPSh_\bullet(S)}(-\wedge T,KO^{[n+1]}_f).\]
	Taking the adjoint we get an equivalence of motivic spaces 
	\begin{equation}\label{eq:ko_strct}
	T\wedge KO^{[n]}_f(-)\iso KO^{[n+1]}_f(-)
	\end{equation}
	for each $n$. The sequence $(KO^{[n]}_f)_{n\geq 0}$ along with structure maps~ \ref{eq:ko_strct} defines a $T$-spectrum $\mathbf{KO}\in \mathbf{Spt}(S)_T$.  As we have to make choices for fibrant replacements $KO^{[n]}_f$ and the structure maps, the $T$-spectrum $\mathbf{KO}$ is not unique. However, by Theorem~\ref{A:main} we get a unique object in $SH(S)$ up to (not necessarily unique) isomorphism. 
	\begin{definition}\label{def:KO}
		We define $\mathbf{KO}$ to be the naive $T$-spectrum given by the sequence $\mathbf{KO}=(KO^{[n]})_{n\geq 0}$ and the structure maps $T\wedge_{\mathbf{L}} KO^{[n]}\to KO^{[n+1]}$ in $H_\bullet(S)$ induced by the weak equivalences in ~\ref{eq:ko_strct}. By Theorem~\ref{A:main}, $\mathbf{KO}$ defines a unique object in $SH(S)$ up to isomorphism. By abuse of notation we will refer to this object also as $\mathbf{KO}$.
	\end{definition}
	From Theorems \ref{ko:unstable_rep} and \ref{th:thom_iso} we get the following corollary.
	\begin{corollary}\label{cor:rep_main}
		Let $S$ be regular noetherian scheme of finite Krull dimension with $\half\in\Gamma(S,\mc{O}_S)$. For all $X\in Sm_S$ there is an isomorphism of groups
		\[ [\Sigma^\infty X_+, S^{p,q}\wedge\mathbf{KO}]\cong KO^{[q]}_{2q-p}(X)   \]
		where $2q\geq p\geq q\geq 0$ and $[-,-]$ denotes $Hom_{SH(S)}(-,-)$.
	\end{corollary}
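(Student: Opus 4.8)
The plan is to unwind the construction of $\mathbf{KO}$ as an $\Omega_T$-spectrum and then reduce the bigraded stable hom-group on the left-hand side to an unstable one, to which Theorem~\ref{ko:unstable_rep} applies verbatim.

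First I would record that the structure maps~\eqref{eq:ko_strct} of $\mathbf{KO}$ are weak equivalences of motivic spaces; this is precisely where Theorem~\ref{th:thom_iso} is used, through the Koszul class $th(\mc{O},id)$ of the trivial line bundle. Consequently the sequence $(KO^{[n]}_f)_{n\ge 0}$ of $\mathbb{A}^1$-fibrant presheaves, together with the adjoint equivalences $KO^{[n]}_f\iso\Omega_T KO^{[n+1]}_f$, is a stably fibrant $T$-spectrum --- an $\Omega_T$-spectrum --- which by Theorem~\ref{A:main} represents the object $\mathbf{KO}\in SH(S)$ of Definition~\ref{def:KO}. For any such $\Omega_T$-spectrum $\mathbf{E}$ and any pointed motivic space $Y$, the adjunction $\Sigma^\infty\dashv\Omega^\infty$ yields a natural isomorphism $[\Sigma^\infty Y,\mathbf{E}]_{SH(S)}\cong[Y,\mathbf{E}_0]_{\mathbb{A}^1}$, where $\mathbf{E}_0$ is the (already $\mathbb{A}^1$-fibrant) zeroth term.

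Next I would take care of the Tate twist using the normalisation $T\simeq S^1_s\wedge\mathbb{G}_m$, hence $T^{\wedge q}\simeq S^q_s\wedge\mathbb{G}_m^{\wedge q}$, and the invertibility of $S^1_s$ in $SH(S)$. Since $2q-p\ge 0$, writing $S^{p,q}=S^{p-q}_s\wedge\mathbb{G}_m^{\wedge q}$ gives $S^{p,q}\wedge S^{2q-p}_s\simeq T^{\wedge q}$, and therefore $S^{p,q}\wedge\mathbf{KO}\simeq\Omega^{2q-p}_s\bigl(T^{\wedge q}\wedge\mathbf{KO}\bigr)$ in $SH(S)$. I would then identify $T^{\wedge q}\wedge\mathbf{KO}$ with the $q$-fold shift $(KO^{[q]}_f,KO^{[q+1]}_f,\dots)$: smashing a $T$-spectrum with $T$ agrees, up to stable equivalence, with the shift functor, since applying the levelwise $\Omega_T$ to the shift of an $\Omega_T$-spectrum returns that spectrum and $\Sigma_T,\Omega_T$ are mutually inverse on $SH(S)$; moreover the shift of an $\Omega_T$-spectrum is again an $\Omega_T$-spectrum, here with zeroth term $KO^{[q]}_f\simeq KO^{[q]}$.

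Combining the two reductions,
\[
[\Sigma^\infty X_+,\,S^{p,q}\wedge\mathbf{KO}]_{SH(S)}\;\cong\;[\Sigma^\infty(S^{2q-p}\wedge X_+),\,T^{\wedge q}\wedge\mathbf{KO}]_{SH(S)}\;\cong\;[S^{2q-p}\wedge X_+,\,KO^{[q]}]_{\mathbb{A}^1},
\]
and since $q\ge 0$ and $2q-p\ge 0$, Theorem~\ref{ko:unstable_rep} identifies the last group with $KO^{[q]}_{2q-p}(X)=\pi_{2q-p}(KO^{[q]}(X))$, which is the assertion. The only step that requires genuine care, and thus the main (if rather mild) obstacle, is the bookkeeping with the bigrading together with the claim that $\Sigma_T$ coincides with the shift on $\Omega_T$-spectra, so that $T^{\wedge q}\wedge\mathbf{KO}$ really is an $\Omega_T$-spectrum with zeroth term $KO^{[q]}$; everything else is a formal manipulation of the loop--suspension adjunction.
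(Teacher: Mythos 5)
Your argument is correct and is essentially the same as the paper's: the paper's proof is the single chain of isomorphisms $[\Sigma^\infty X_+, S^{p,q}\wedge\mathbf{KO}]\cong [S^{2q-p}\wedge\Sigma^\infty X_+, T^q\wedge\mathbf{KO}]\cong [S^{2q-p}\wedge\Sigma^\infty X_+, \mathbf{KO}[-q]]\cong [S^{2q-p}\wedge X_+, KO^{[q]}]_{\mathbb{A}^1}\cong KO^{[q]}_{2q-p}(X)$, which is exactly your reduction via the bigrading identity $S^{p,q}\wedge S^{2q-p}_s\simeq T^{\wedge q}$, the identification of $T^{\wedge q}\wedge\mathbf{KO}$ with the shifted $\Omega_T$-spectrum with zeroth term $KO^{[q]}$, and Theorem~\ref{ko:unstable_rep}. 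You simply make explicit the justifications (Theorem~\ref{th:thom_iso} for the $\Omega_T$-spectrum structure, the $\Sigma^\infty\dashv\Omega^\infty$ adjunction) that the paper leaves implicit.
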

	\begin{proof}
		As $S^{p,q}=S^{p-q}\wedge\mathbb{G}^q_m$ and $T\cong S^{2,1}$ in $H_\bullet(S)$ we have a sequence of bijections
		\begin{multline*}
			[\Sigma^\infty X_+, S^{p,q}\wedge\mathbf{KO}]\cong [S^{2q-p}\wedge\Sigma^\infty X_+, T^q\wedge\mathbf{KO}]\\
			\cong [S^{2q-p}\wedge\Sigma^\infty X_+, \mathbf{KO}[-q]]\cong [S^{2q-p}\wedge X_+, KO^{[q]}]_{\mathbb{A}^1}\cong KO^{[q]}_{2q-p}(X)
		\end{multline*}
		for all $2q\geq p\geq q\geq 0$.
	\end{proof}
	The above result suggests a definition of $KO^{[n]}_i(X)$ for $i<0$. 
        Indeed Schlichting in \cite{Sch12} constructs an $\Omega$-spectrum $\mathbb{GW}^{n}(X)$ for any scheme $X$ with an ample family of line bundles which satisfies
	\begin{equation}
	\pi_i \mathbb{GW}^{n}(X)=\begin{cases}
								KO_i^{[n]}(X) &i\geq 0\\
								W^{n-i}(X) &i<0\\
							 \end{cases} 
	\end{equation} 
	for all $i\in\ZZ$ and $n\in\mathbb{N}$. Here $W^{n}(X)$ are Balmer's triangular Witt groups \cite{Bal00}. Setting $KO_i^{[n]}(X)=\pi_i \mathbb{GW}^{n}(X)$ for all $i$ and using \cite[Lem.~5.2]{PW18} we get the following generalisation.
	\begin{theorem}\label{thm:rep_main2}
		Let $S$ be regular noetherian scheme of finite Krull dimension with $\half\in\Gamma(S,\mc{O}_S)$. For all $X\in Sm_S$ there is an isomorphism of groups
		\[ [\Sigma^\infty X_+,~S^{p,q}\wedge\mathbf{KO}]\cong KO^{[q]}_{2q-p}(X)   \]
		where $p\geq q\geq 0$.
	\end{theorem}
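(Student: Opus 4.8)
The plan is to bootstrap from Corollary~\ref{cor:rep_main}, which already yields the asserted isomorphism whenever $2q\geq p\geq q\geq 0$, and to reach the remaining range $p>2q$ by identifying the relevant \emph{negative} motivic $S^1$-homotopy of the $T$-spectrum $\mathbf{KO}$ with Balmer's triangular Witt groups.

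First I would isolate the purely formal part of the computation in Corollary~\ref{cor:rep_main}. The chain of adjunctions $[\Sigma^\infty X_+, S^{p,q}\wedge\mathbf{KO}] \cong [S^{2q-p}\wedge\Sigma^\infty X_+, T^q\wedge\mathbf{KO}] \cong [S^{2q-p}\wedge\Sigma^\infty X_+, \mathbf{KO}[-q]]$ uses only $S^{p,q}\cong S^{p-2q}\wedge T^q$ and $T\cong S^{2,1}$ in $H_\bullet(S)$, hence is valid for all $p\geq q\geq 0$ with no upper bound on $p$; here $\mathbf{KO}[-q]=(KO^{[q]},KO^{[q+1]},\dots)$ is the shifted $T$-spectrum appearing in the proof of Corollary~\ref{cor:rep_main}. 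When $2q-p\geq 0$ the right-hand side is $\pi_{2q-p}\big(KO^{[q]}(X)\big)$, and since $2q-p\geq 0$ this coincides with $\pi_{2q-p}\,\mathbb{GW}^{q}(X)=KO^{[q]}_{2q-p}(X)$ by \cite{Sch12}; so the range $q\leq p\leq 2q$ is exactly Corollary~\ref{cor:rep_main}, now phrased without reference to the connective spaces $KO^{[q]}$.

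For $p>2q$ set $2q-p=-m$ with $m>0$. The identity above reads $[\Sigma^\infty X_+, S^{p,q}\wedge\mathbf{KO}]\cong[\Sigma^\infty X_+, S^{m}\wedge\mathbf{KO}[-q]]$, i.e.\ the group in question is the $(-m)$-th motivic $S^1$-homotopy group of $\mathbf{KO}[-q]$ evaluated at $X$. The core step is to identify it with $W^{q+m}(X)=W^{p-q}(X)$. I would do this by comparing $\mathbf{KO}$ with the nonconnective $\Omega$-spectra $\mathbb{GW}^{q}$ of \cite{Sch12}: the unstable equivalences of Theorem~\ref{ko:unstable_rep} together with \cite[Lem.~5.2]{PW18} show that, after $\mathbb{A}^1$- and Nisnevich-localization, the negative $S^1$-homotopy sheaves of $\mathbf{KO}[-q]$ are Witt sheaves, whose sections over the regular scheme $X$ are Balmer's groups $W^{q+m}(X)$. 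Feeding this back and using the definition $KO^{[q]}_{i}(X):=\pi_i\mathbb{GW}^{q}(X)=W^{q-i}(X)$ for $i<0$ gives $[\Sigma^\infty X_+, S^{p,q}\wedge\mathbf{KO}]\cong KO^{[q]}_{2q-p}(X)$, covering the range $p>2q$.

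The main obstacle is precisely this last identification. Unlike algebraic $K$-theory, whose negative motivic homotopy over a regular base vanishes, $\mathbf{KO}$ has nontrivial negative $S^1$-homotopy even though each presheaf $KO^{[n]}$ is connective; the nonvanishing part is the Witt theory, and pinning it down is where one genuinely uses the fundamental theorem of hermitian $K$-theory in the form \cite[Lem.~5.2]{PW18}. One must also note that Bott periodicity does not trivialise the problem: it moves $(p,q)$ by multiples of $(8,4)$ and hence leaves $p-2q$ invariant, so it never carries the range $p>2q$ into $p\leq 2q$; and one must check that the filtered colimit computing the stable homotopy of the $T$-spectrum $\mathbf{KO}$ really does stabilise to the Balmer--Witt groups and no further. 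With those points in hand the remainder is the bookkeeping indicated above.
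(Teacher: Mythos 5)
Your proposal is correct and follows essentially the same route the paper takes: the paper also treats the range $q\le p\le 2q$ as Corollary~\ref{cor:rep_main}, defines $KO^{[q]}_{2q-p}(X)$ for $p>2q$ via Schlichting's nonconnective $\Omega$-spectrum $\mathbb{GW}^{q}$ (so that it becomes a Balmer--Witt group), and then invokes \cite[Lem.~5.2]{PW18} to extend the representability to negative stems. Your remarks about why Bott periodicity cannot reduce $p>2q$ to the previously covered range, and about the role of the Witt sheaves as the nonvanishing negative homotopy, are accurate elaborations of what the paper leaves implicit rather than a divergence from it.
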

	\section{The geometric HP\textsuperscript{1}-spectrum KO\textsuperscript{geo}}\label{sec:geo_spec}
	As before, the stable homotopy category $SH(S)$ is the stabilization of $H_\bullet(S)$ with respect to the functor $(X,x_0)\mapsto T\wedge (X,x_0)$ where $T=\mathbb{A}^1/(\mathbb{A}^1-0)$. There is a Quillen equivalence between the categories $\mathbf{Spt}(S)_T$ and $\mathbf{Spt}(S)_{T^{\wedge 2}}$, of $T$- and $T^{\wedge 2}$-spectra, given by the adjoint pair $(X_n)\mapsto (X_{2n})$ and $(X_n)\mapsto (X_0,T\wedge X_0,X_1,T\wedge X_1,\ldots)$, inducing an isomorphism of stable homotopy categories $SH(S)_T\cong SH(S)_{T^{\wedge 2}}$ \cite[Prop.~2.13]{Jar00}. To construct our desired spectrum we need a different model of the stable homotopy category which utilizes the quaternionic projective space $HP^1=HGr(2,4)$. From this point onwards we will denote the base scheme $S$ by $pt$ to simplify notation.
	\begin{theorem}\label{th:HP1isT2}
		Let $x_0:pt\to HP^1$ be the distinguished point corresponding to the subbundle $[H_-\oplus 0]$. There is an isomorphism  $\eta:(HP^1,x_0)\cong T^{\wedge 2}$ in $H_\bullet(S)$.
	\end{theorem}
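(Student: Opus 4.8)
The plan is to exhibit $HP^1 = HGr(2,4)$ as the colimit (in fact the union) of two open subschemes, each isomorphic to affine space $\mathbb{A}^4$, glued along an open whose motivic homotopy type we can identify, and then run the standard ``purity/Mayer--Vietoris'' argument that identifies $T^{\wedge 2}$ with such a pushout. Concretely, I would first recall from Panin--Walter's description of the quaternionic Grassmannian that $HGr(2,4)$ has a standard affine open cover: a rank $2$ symplectic subbundle of $H^2_-$ that projects isomorphically onto one of the two hyperbolic summands is the graph of a ``symplectic'' matrix, and the condition that the restricted form stay nondegenerate cuts out an affine space of the appropriate dimension. Counting: $HGr(2,4)$ has dimension $4$ (it is the quaternionic projective line, the analogue of $\mathbb{P}^1$ which has dimension $2$ over the ``quaternions''), so each chart is $\mathbb{A}^4$, and the distinguished point $x_0 = [H_-\oplus 0]$ sits in one chart as the origin while lying in the complement of the other.

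The key steps, in order, would be: (1) Produce two open immersions $U_0, U_\infty \hookrightarrow HP^1$ with $U_0\cong U_\infty \cong \mathbb{A}^4$, $U_0\cup U_\infty = HP^1$, with $x_0$ the origin of $U_0$ and $x_0\notin U_\infty$; here I would lean on the explicit equations for $HGr$ charts from \cite[Sec.~3]{PW18} (which are characteristic-free, so valid over $\Spec(\ZZ)$). (2) Identify the intersection $U_0\cap U_\infty$: in the projective-line case this is $\mathbb{G}_m\times\mathbb{A}^1 \simeq_{\mathbb{A}^1}\mathbb{G}_m$; in the quaternionic case I expect $U_0\cap U_\infty$ to be $\mathbb{A}^1$-equivalent to $\mathbb{A}^4\setminus 0$, or more precisely to something with the homotopy type of $\mathbb{A}^4\setminus Z$ for the appropriate degeneracy locus $Z$, whose motivic homotopy type is that of a motivic sphere $S^{3}\wedge\mathbb{G}_m^{\wedge 4}/(\ldots)$ — this needs to be pinned down carefully. (3) Since $U_\infty\cong\mathbb{A}^4$ is $\mathbb{A}^1$-contractible, the Nisnevich (or Zariski) Mayer--Vietoris square shows $(HP^1, x_0)\simeq U_0/(U_0\cap U_\infty) \simeq \mathbb{A}^4/(U_0\cap U_\infty)$ in $H_\bullet(S)$. (4) Finally compute $\mathbb{A}^4/(U_0\cap U_\infty)$: the inclusion $U_0\cap U_\infty\hookrightarrow \mathbb{A}^4 = U_0$ should be, up to $\mathbb{A}^1$-equivalence, the inclusion $\mathbb{A}^4\setminus 0 \hookrightarrow \mathbb{A}^4$ (the degeneracy locus being $\mathbb{A}^1$-equivalent to the origin since $HP^1$ minus a rational point is $\mathbb{A}^1$-contractible by the ``affine bundle over $HP^0 = pt$'' structure), whence $\mathbb{A}^4/(\mathbb{A}^4\setminus 0) \cong T^{\wedge 4}\cong (T^{\wedge 2})^{\wedge 2}$. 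Wait — this would give $T^{\wedge 4}$, not $T^{\wedge 2}$; so in fact the correct count must be that each chart is $\mathbb{A}^2$-dimensional over the base in the relevant sense, i.e.\ I should recompute: $HP^1$ should deformation-retract so that $HP^1/HP^0$ is $T^{\wedge 2}\simeq\mathbb{A}^2/(\mathbb{A}^2\setminus 0)$, matching $\dim HP^1 = 4$ only if the complement of the point is a rank-$2$ affine bundle over a $T^{\wedge 2}$-like base — I would reconcile this by carefully using the symplectic-bundle structure $HGr(2,4)\setminus HP^0 \to \mathbb{A}^1$-point rather than naive dimension count, following the quaternionic analogue of $\mathbb{P}^n/\mathbb{P}^{n-1}\cong T^{\wedge n}$, here giving $HP^1/HP^0\cong T^{\wedge 2}$.

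The main obstacle I anticipate is step (2)/(4): correctly identifying the homotopy type of the chart intersection and, relatedly, showing that $HP^1\setminus\{x_0\}$ is $\mathbb{A}^1$-contractible (or computing it exactly) over $\Spec(\ZZ)$ without invertibility of $2$. Over a field this is the content of Panin--Walter's computation that $HP^1$ is the quaternionic projective line with the expected cell structure, and the arguments there are geometric (explicit affine charts and transition functions) rather than relying on $\half$, so I expect them to transport; but I would need to verify that the relevant affine-bundle/deformation arguments (e.g.\ that a certain scheme is a torsor under a vector bundle, hence $\mathbb{A}^1$-equivalent to its base) are genuinely characteristic-free. A secondary, more bookkeeping obstacle is choosing orientations/signs so that the isomorphism $\eta$ is natural enough to be used later (for the bonding maps of $\mathbf{KO}^{geo}$); I would set up $\eta$ via an explicit map of pointed spaces induced by one of the affine charts, so that its naturality in $S$ is manifest, which will be needed for part (3) of the main theorem (compatibility with $Lf^*$).
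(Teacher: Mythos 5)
Your overall strategy --- present $HP^1$ via explicit affine cells, collapse a contractible open piece, and recognize the quotient by excision and purity --- is in the right spirit, but the execution has a fatal gap at steps (2) and (4), which you notice but do not correctly resolve. The claim that $HP^1\setminus\{x_0\}$ is $\mathbb{A}^1$-contractible is false, and in fact must be false for the theorem to hold: if it were true, homotopy purity would identify $(HP^1,x_0)\cong HP^1/(HP^1\setminus\{x_0\})$ with the Thom space of the normal bundle of the codimension-$4$ closed point $x_0$, i.e.\ with $T^{\wedge 4}$ rather than $T^{\wedge 2}$ --- exactly the wrong answer you flagged. So one must not collapse the complement of a point. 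The geometric input that the paper takes from the quaternionic projective bundle theorem of Panin and Walter, \cite[Thm.~3.4]{PW10}, is that the complement of a \emph{codimension-$2$} closed subscheme $N^+\cong\mathbb{A}^2\subset HP^1$ is $\mathbb{A}^1$-contractible: explicitly, $HP^1\setminus N^+$ is the quotient of $\mathbb{A}^5$ by a free $\mathbb{G}_a$-action, and a $\mathbb{G}_a$-invariant inclusion $\mathbb{A}^1\into\mathbb{A}^5$ descends to an $\mathbb{A}^1$-equivalence $pt\iso HP^1\setminus N^+$ landing on the base point. Since $x_0\in HP^1\setminus N^+$, one gets $(HP^1,x_0)\simeq HP^1/(HP^1\setminus N^+)$, now with codimension $2$ rather than $4$.

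The paper also does not use a two-chart $\mathbb{A}^4$-cover of $HP^1$; the natural candidates (subbundles projecting isomorphically onto one hyperbolic summand) are proper opens of $\mathbb{A}^4$ cut out by the nondegeneracy of the restricted form, so it is unclear that such a cover exists, and your step (1) needs justification. Instead the paper works with a single open cell $N\cong\mathbb{A}^4$ of the ambient Grassmannian $Gr(2,4)$ (not of $HP^1$ itself), decomposed as $N=N^+\times N^-$ with $N^\pm\cong\mathbb{A}^2$ closed in $HP^1$. Two applications of Nisnevich excision identify $HP^1/(HP^1\setminus N^+)$ with $N/(N\setminus N^+)$, and since $N\setminus N^+=N^+\times(N^-\setminus 0)$ is a rank-$2$ vector bundle over $N^-\setminus 0\cong\mathbb{A}^2\setminus 0$, one concludes $N/(N\setminus N^+)\simeq N^-/(N^-\setminus 0)\cong\mathbb{A}^2/(\mathbb{A}^2\setminus 0)=T^{\wedge 2}$. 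Your closing remark that ``each chart is $\mathbb{A}^2$-dimensional in the relevant sense'' is gesturing at this codimension-$2$ phenomenon, but the mechanisms that make it precise --- the $\mathbb{G}_a$-torsor structure on $\mathbb{A}^5\to HP^1\setminus N^+$ and the vector-bundle projection $N\setminus N^+\to N^-\setminus 0$ --- are exactly what is missing from your argument, and supplying them amounts to reproducing the cited Panin--Walter result.
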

	\begin{proof}
			This is \cite[Thm.~9.8]{PW18}. To introduce notation and to illustrate the geometry present when discussing higher Grassmannians, we elaborate some of the arguments below. Consider the open subscheme $\mathbb{A}^4\cong N\into Gr(2,4)$ classifying rank $2$ subbundles $U\mono \mc{O}^4$ whose projection onto $0\oplus 0\oplus\mc{O}^2$ is an isomorphism. We have $N=N^+\oplus N^-$, where $N^+=HP^1\cap Gr(2,0\oplus\mc{O}\oplus\mc{O}^2)$ and $N^-=HP^1\cap Gr(2,\mc{O}\oplus 0\oplus\mc{O}^2)$ are closed subschemes of $HP^1$ which are isomorphic to $\mathbb{A}^2$. By \cite[Thm.~3.4]{PW10}, $HP^1- N^+$ is the quotient of $\mathbb{A}^5=\mathbb{A}^2\times\mathbb{A}^2\times\mathbb{A}^1$ by the free $\mathbb{G}_a$-action,
		\[ t\cdot(a,b,r)=(a,b+ta,r+t(1-a \begin{pmatrix}
		0 &-1\\
		1 &0
		\end{pmatrix}b)).\]
		The inclusion $\mathbb{A}^1\into \mathbb{A}^5$ given by $t\mapsto (0,0,0,0,t) $ is then $\mathbb{G}_a$-equivariant and an $\mathbb{A}^1$-equivalence. Therefore we have an induced $\mathbb{A}^1$-equivalence of the quotients $x_0:pt\to HP^1-N^+$ given by the subspace $0\oplus 0\oplus H_-$. The commutative square,  
		\[		
		\begin{tikzcd}
		pt \arrow[d,"\sim"] \arrow[r] & HP^1 \arrow[d,"="]\\
		HP^1- N^+ \arrow[r, hook] & HP^1
		\end{tikzcd}
		\] gives us an equivalence of pointed spaces $(HP^1,x_0)\iso HP^1/(HP^1-N^+)$.
		We have a similar square induced by the $\mathbb{A}^1$-equivalence $N^-\iso N$,
		\[		
		\begin{tikzcd}
		N^-- 0 \arrow[d,"\sim"] \arrow[r] & N^- \arrow[d,"\sim"]\\
		N- N^+ \arrow[r, hook] & N
		\end{tikzcd}
		\]
		where the left hand side is an equivalence as $N-N^+=\mathbb{A}^4-\mathbb{A}^2$ is a rank $2$ vector bundle over $N^--0$. Hence we have $N^-/(N^--0)\cong N/(N-N^+)$ in $H_\bullet(S)$ and there is a zigzag of $\mathbb{A}^1$-equivalences
		\[		
		\begin{tikzcd}
		T^2\cong \mathbb{A}^2/(\mathbb{A}^2-0)\cong N^-/N^-- 0 \arrow[d,"\sim"] \arrow[r]\arrow[rd] & HP^1/(HP^1-N^+) & (HP^1,x_0)\arrow[l,"\sim"]\\
		N/N- N^+  & N\cap HP^1/((N\cap HP^1)-N^+)\arrow[l,"\mathrm{excision}"]\arrow[u,"\mathrm{excision}"]
		\end{tikzcd}
		\]
		Using the $2$ out of $3$ property twice this gives us an $\mathbb{A}^1$-equivalence $T^{\wedge 2}\iso HP^1/(HP^1-N^+)$ and hence $T^{\wedge 2}\cong (HP^1,x_0)$ in $H_\bullet (S)$.  
	\end{proof}
	Theorem~\ref{th:HP1isT2} provides equivalences of stable homotopy categories $SH(S)\cong SH(S)_{T^{\wedge2}}\cong SH(S)_{HP^1}$. We now have the desired model of $SH(S)$. We will construct $\mathbf{KO}^{geo}$ as a naive $HP^1$-spectrum in the sense of appendix \ref{sec:naive-spectra}. For any $n\in\ZZ$, we have a canonical decomposition $V_{n}\perp V^\perp_{n}\cong H^{n}_+$ over $RGr(n,2n)$ which corresponds to the identity map $RGr(n,2n)\to RGr(n,2n)$. Similarly we have the canonical decompostion $U_{2n}\perp U^\perp_{2n}\cong H^{2n}_-$ over $HGr(2n,4n)$.
	Let $\ZZ\times HGr:=\colim_n [-n,n]\times HGr(2n,4n)$, where $[-n,n]\times X$ denotes the  disjoint union of $2n+1$ copies of $X$. The morphism $HGr(2n,4n)\to HGr(2n+2,4n+4)$  is given by the subbundle
	\[ U_{2n}\perp H_-\mono (H^n_-\perp H_-)\perp(H^n_-\perp 0)\mono (H^n_-\perp H_-)\perp (H^n_-\perp H_-) \] 
	where $HGr(2n,4n)$ is identified with $HGr(2n,H^n_-\perp H^n_-)$. Defining $[-n,n]'$ by
	\[ [-n,n]'=\{i\in\ZZ|-n\leq i\leq n\: and\: i\equiv n\:\mathrm{mod}\:2\}\] 
	the infinite real Grassmannian,
	\[\ZZ\times RGr:=\colim_n [-2n,2n]'\times RGr(2n,4n)\cup [-2n+1,2n-1]'\times RGr(2n-1,4n-2)\]
	is defined along similar lines. The distinguished points $h_0:pt\to\ZZ\times HGr$ and $r_0:pt\to\ZZ\times RGr$ correspond to the subbundles $H^n_-\perp 0\mono H^n_-\perp H^n_-$ and $H^n_+\perp 0\mono H^n_+\perp H^n_+$ over $\{0\}\times HGr(2n,4n)\subset [-n,n]\times HGr(2n,4n)$ and $\{0\}\times RGr(2n,4n)\subset [-2n,2n]'\times RGr(2n,4n)$ respectively for each $n$. 
	\begin{lemma}\label{lem:structure-hgr-rgr}
		For all $n\geq 0$, there exist morphisms of pointed smooth schemes
		\[f_{2n}:([-n,n]\times HGr(2n,4n))\times HP^1\to RGr(16n,32n) \]
		such that the following hold. 
		\begin{enumerate}
			\item The restriction $f_{2n|(0,H^{n}_-\perp 0)\times HP^1}$ is given by the subbundle $H^{8n}_+\perp 0$.
			\item The restriction $f_{2n|[-n,n]\times HGr(2n,4n)\times (H_-\perp 0)}$ is given by an embedding $H^{8n}_+\mono H^{16n}_+$ which is $\mathbb{A}^1$-homotopic to the embedding given by $H^{8n}_+\perp 0$.
			\item These morphisms and $\mathbb{A}^1$-homotopies are compatible with inclusions of schemes $HGr(2n,4n)\to HGr(2(n+1),4(n+1))$ and $RGr(16n,32n)\to RGr(16(n+1),32(n+1))$. 
		\end{enumerate}
	\end{lemma}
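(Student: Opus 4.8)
The plan is to construct $f_{2n}$ as the classifying map of an explicit non-degenerate symmetric subbundle of $H^{16n}_+$, built by tensoring the two tautological symplectic bundles, using that the tensor product of two alternating bilinear forms is a non-degenerate \emph{symmetric} form and that a symmetric bundle admitting a half-rank totally isotropic subbundle is hyperbolic. First I would record two isometries. Over $([-n,n]\times HGr(2n,4n))\times HP^1$ the two projections give the tautological symplectic subbundles $U_{2n}\mono H^{2n}_-$ and $U_2\mono H^2_-$ with $U_{2n}\perp U_{2n}^\perp\cong H^{2n}_-$, $U_2\perp U_2^\perp\cong H^2_-$. Since $H^{2n}_-$ carries a (free, rank $2n$) Lagrangian $\Lambda$, the subbundle $\Lambda\otimes H^2_-$ is half-rank and totally isotropic in the symmetric bundle $H^{2n}_-\otimes H^2_-$, so $H^{2n}_-\otimes H^2_-\cong H^{8n}_+$; the same argument gives $U_{2n}\otimes(H_-\perp 0)\cong H_+(U_{2n})$ and $U_{2n}^\perp\otimes(0\perp H_-)\cong H_+(U_{2n}^\perp)$. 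I fix once and for all such an isometry and the induced identification $(H^{2n}_-\otimes H^2_-)^{\perp 2}\cong H^{16n}_+$.

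With these in hand I would define $f_{2n}$ on the component $\{i\}\times HGr(2n,4n)\times HP^1$ to be the classifying map of a rank-$16n$ non-degenerate symmetric subbundle of $H^{16n}_+$, assembled as an orthogonal sum of the four tensor pieces $U_{2n}^{(\pm)}\otimes U_2^{(\pm)}$ — where $(+)$ denotes a tautological bundle and $(-)$ its orthogonal complement — distributed between the two copies of $H^{2n}_-\otimes H^2_-$ together with fixed hyperbolic summands, the distribution being chosen precisely so that it collapses at the two distinguished points; the integer $i$ is absorbed by the corresponding translation in the $\ZZ$-factor of $\ZZ\times RGr$. Every ingredient here is to be chosen uniformly in $n$.

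To verify (1) I would restrict to $(0,H^n_-\perp 0)\times HP^1$: here $H^n_-\perp 0$ and $0\perp H^n_-$ are \emph{fixed} orthogonal summands of $H^{2n}_-$, so the four pieces become $(H^n_-\perp 0)\otimes U_2^{(\pm)}$ and $(0\perp H^n_-)\otimes U_2^{(\pm)}$, and using $U_2\perp U_2^\perp\cong H^2_-$ these recombine orthogonally into bundles with no $HP^1$-dependence, so the restriction is the constant subbundle $H^{8n}_+\perp 0$. To verify (2) I would restrict to $[-n,n]\times HGr(2n,4n)\times(H_-\perp 0)$: now $H_-\perp 0$, $0\perp H_-$ are the fixed summands, the pieces become $U_{2n}^{(\pm)}\otimes(H_-\perp 0)\cong H_+(U_{2n}^{(\pm)})$ etc., and their orthogonal sum is isometric to $H_+(U_{2n}\perp U_{2n}^\perp)=H_+(H^{2n}_-)\cong H^{4n}_+$, so $f_{2n}$ there classifies a subbundle abstractly isomorphic to $H^{8n}_+$; to straighten the embedding to the standard $V\mapsto V\perp 0$ I would use an $\mathbb{A}^1$-homotopy given by a path of elementary isometries $I+tN$ of $H^{16n}_+$ (with $N$ nilpotent and $I+tN$ orthogonal for all $t$), exactly the device used in the proof of Theorem~\ref{iso_ksp}, which automatically stays among non-degenerate subbundles — crucial since $2$ need not be invertible.

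Property (3) will follow because the stabilization $HGr(2n,4n)\to HGr(2n+2,4n+4)$ is $U_{2n}\mapsto U_{2n}\perp H_-$ and the corresponding one on $RGr$ is $V\mapsto V\perp H^8_+$, and the tensor pieces, fixed summands and matrix paths above all commute with $\perp H_-$, resp.\ $\perp H^8_+$, since they were chosen uniformly in $n$; so the square in (3) commutes on the nose, as do the $\mathbb{A}^1$-homotopies of (2). The hard part will be the bookkeeping in the definition of $f_{2n}$ itself: producing one subbundle formula that is at the same time everywhere a non-degenerate symmetric subbundle over $\Spec(\ZZ)$ — which is why one cannot pass to "half" of a hyperbolic form and is forced to use the two copies of $H^{2n}_-\otimes H^2_-$ — collapses at \emph{both} distinguished points as (1) and (2) demand, and is uniform in $n$; once this formula is pinned down the remaining checks, including the explicit $I+tN$ homotopy of (2), are routine.
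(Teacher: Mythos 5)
Your overall strategy matches the paper's: use tensor products of the tautological symplectic bundles together with the isometry $H_-\otimes H_-\cong H^2_+$ (which you correctly want to fix once and for all, compatibly in $n$, rather than invoking a general "Lagrangian implies hyperbolic" principle, which fails in residue characteristic $2$), classify the resulting rank-$16n$ symmetric subbundle of $H^{16n}_+$, and use conjugation by elementary orthogonal matrices $I+tN$ as in Theorem~\ref{iso_ksp} to straighten the embedding in (2). The verifications of (1)--(3) you sketch follow the paper's pattern.

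However, there is a concrete gap in your definition of the subbundle. You propose to use "the four tensor pieces $U_{2n}^{(\pm)}\otimes U_2^{(\pm)}$ \ldots together with fixed hyperbolic summands," with the index $i$ "absorbed by the corresponding translation in the $\ZZ$-factor of $\ZZ\times RGr$." Two problems. First, the codomain of $f_{2n}$ in the statement is the single Grassmannian $RGr(16n,32n)$, not $\ZZ\times RGr$, so there is no $\ZZ$-factor to shift by; the $i$-dependence has to live inside the subbundle formula itself. Second, the four pieces $U_{2n}\otimes U_2$, $U_{2n}\otimes U_2^\perp$, $U_{2n}^\perp\otimes U_2$, $U_{2n}^\perp\otimes U_2^\perp$ each have rank $4n$, so together they already exhaust the rank budget $16n$, leaving no room for additional $i$-dependent fixed summands and producing a map that is constant in $i$. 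The paper uses instead
\[
(U_{2n}\boxtimes U_2)\perp(H^{n-i}_-\boxtimes U^\perp_2)\perp(U^{\perp}_{2n}\boxtimes H_-)\perp (H_-^{n+i}\boxtimes H_-)
\]
inside $(H^{2n}_-\boxtimes U_2)\perp(H^{2n}_-\boxtimes U^\perp_2)\perp(H^{2n}_-\boxtimes H_-)\perp(H^{2n}_-\boxtimes H_-)\cong H^{16n}_+$: only one piece carries both tautological bundles, and the $i$-dependence enters through the ranks $n\mp i$ of the fixed hyperbolic summands, keeping the total rank at $16n$ while distinguishing the $2n+1$ components. This is exactly the arrangement that makes (1), (2), and the stabilization compatibility (3) all hold simultaneously. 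Without an explicit formula of this kind, your conditions (1)--(3) are assertions rather than consequences, so you should pin down the formula before running the rest of the argument.
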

	\begin{proof}
		For  simplicity, given two vector bundles $U$ and $V$ over two distinct schemes $X$ and $Y$, we denote by $U\boxtimes V$ the vector bundle $p_1^*U\otimes p_2^*V$ over $X\times Y$ where $p_i$ are the projections. We have decompositions $U_{2n}\perp U^\perp_{2n}\cong H^{2n}_-$ for all $n$. Remember that the tensor product of two symplectic spaces is a symmetric space and in particular $H^{n}_-\boxtimes H_-^{ m}\cong H_+^{2mn}$. For each $n\in\mathbb{N}$ and $i\in[-n,n]$, we have inclusions of symmetric spaces over $HGr(2n,4n)\times HP^1$, 
		\begin{equation}\label{bo_structure_map:eq1}
		\begin{aligned}
			U_{2n}\boxtimes U_2\mono H^{2n}_-\boxtimes U_2,\\
			H_-^{n-i}\boxtimes U^{\perp}_2\mono H_-^{2n}\boxtimes U^{\perp}_2,\\
			U^{\perp}_{2n}\boxtimes H_-\mono H^{2n}_-\boxtimes H_- ,\\
		 	H_-^{n+i}\boxtimes H_-\mono H_-^{2n}\boxtimes H_-. 
		 \end{aligned}
		 \end{equation}
		Putting these together, we get 
		\begin{equation}
		(U_{2n}\boxtimes U_2)\perp(H^{n-i}_-\boxtimes U^\perp_2)\perp(U^{\perp}_{2n}\boxtimes H_-)\perp  (H_-^{n+i}\boxtimes H_-)  
		\end{equation}
		which is a rank $16n$ symmetric subspace of the rank $32n$ symmetric space
		\begin{equation}\label{bo_structure_map:eq2}
		(H^{2n}_-\boxtimes U_2) \perp (H^{2n}_-\boxtimes U^\perp_2)\perp (H^{2n}_-\boxtimes H_-)\perp  (H_-^{2n}\boxtimes H_-).  
		\end{equation}
		This space is isometric to $H^{16n}_+$ and a choice of isometry gives us $2n+1$ subspaces of $H^{ 16n}_+$ (one for each $i$), each of which is rank $16n$. Hence we have a morphism
		\begin{equation}
		f_{2n}:([-n,n]\times HGr(2n,4n))\times HP^1\to RGr(16n,32n).
		\end{equation}
		The composition  
		\[HGr(2n,4n)\times HP^1\to HGr(2n+2,4n+4)\times HP^1 \to RGr(16n+16,32n+32),\]
		for a fixed $i$ then is given by the subspace
		\[ ((U_{2n}\perp H_-)\boxtimes U_2)\perp ((U^{\perp}_{2n}\perp H_-)\boxtimes H_-)\perp ((H^{n-i}_-\perp H_-)\boxtimes U^\perp_2)\perp H^{2n+2+2i}_+.\]
		To make this equal to the composition
		\[HGr(2n,4n)\times HP^1\to RGr(16n,32n)\to RGr(16n+16,32n+32)\]
		we need to choose the isometry used to define $f_{2n}$ carefully. First,  we  index $H^{2n}_-$ as $\perp^{2n}_{i=1} {H^{(i)}_-}$. 
		We then get isometries
		\begin{equation}\label{eq:embedHplus}
		(H^{(i)}_-\boxtimes U_2) \perp ({H^{(i)}_-}\boxtimes U^\perp_2)\perp (H^{(i)}_-\boxtimes H_-)\perp (H^{(i)}_-\boxtimes H_-)\cong H^{8}_+
		\end{equation}	for each $i\in \{1,\ldots,2n\}$. Putting these together we get, 
		\begin{equation}
		(H^{2n}_-\boxtimes U_2) \perp (H^n_-\boxtimes U^\perp_2)\perp (H^n_-\boxtimes H_-)\perp (H^n_-\boxtimes H_-)\cong H^{16n}_+ 
		\end{equation}
		which is the desired isometry. Pulling back this isometry along the inclusion $(0,H^n_-\perp 0)\times HP^1\into [-n,n]\times HGr(2n,4n))\times HP^1$ turns the symplectic subspace~\ref{bo_structure_map:eq2} into
		$H^{8n}_+\perp 0$ and pulling back the isometry along $[-n,n]\times HGr(2n,4n)\times (H_-\perp 0)$ turns it into 
		\[(U_{2n}\boxtimes(H_-\perp 0))\perp (H^{n-i}_-\boxtimes(0\perp H_-))\perp (U^\perp_{2n}\boxtimes(H_-\perp 0))\perp(H^{n+i}_-\boxtimes(0\perp H_-)) \] 
		which is a subbundle isometric to $H^{8n}_+$. Permuting the summands of $H^{16n}_+$ we can send the above subbundle to $H^{8n}_+\perp 0\mono H^{8n}_+\perp H^{8n}_+$. As given in the proof of Theorem~\ref{iso_ksp} these permutations are $\mathbb{A}^1$-homotopic to identity. 
	\end{proof}
	 The next lemma can be proved analogously,
	\begin{lemma}\label{lem:structure-rgr-hgr}
		For all $n\geq 0$, there exist morphisms of pointed smooth schemes
		\[g_{n}:([-n,n]'\times RGr(n,2n))\times HP^1\to HGr(8n,16n) \]
		such that the following holds:
		\begin{enumerate}
			\item The restriction $g_{n|(0,H^{2n}_+\perp 0)\times HP^1}$ is given by the subbundle $H^{8n}_-\perp 0$ when $n$ is even.
			\item The restriction $g_{n|[-n,n]\times RGr(2n,4n)\times (H_+\perp 0)}$ is given by an embedding $H^{8n}_-\mono H^{16n}_-$ which is $\mathbb{A}^1$-homotopic to the embedding given by $H^{8n}_-\perp 0$.
			\item These morphisms and $\mathbb{A}^1$-homotopies are compatible with inclusions of schemes $RGr(n,2n)\to RGr(n+1,2(n+1))$ and $HGr(8n,16n)\to HGr(8(n+1),16(n+1))$.
		\end{enumerate}
	\end{lemma}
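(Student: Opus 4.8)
The plan is to transcribe the proof of Lemma~\ref{lem:structure-hgr-rgr} with the roles of symmetric and symplectic spaces interchanged. What makes the transcription go through is that the exterior tensor product of a symmetric space with a symplectic space is symplectic, and that $H^n_+\boxtimes H_-\cong H^{2n}_-$, so that the ambient bundles appearing below are genuine hyperbolic symplectic spaces.

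First I would recall the tautological data: over $RGr(n,2n)$ the canonical orthogonal decomposition $V_n\perp V^\perp_n\cong H^n_+$ with $V_n$ of rank $n$, and over $HP^1=HGr(2,4)$ the decomposition $U_2\perp U^\perp_2\cong H^2_-$ with $U_2$ of rank $2$. Then, for each $n\geq 0$ and each $i\in[-n,n]'$ (so that $(n-i)/2$ and $(n+i)/2$ are nonnegative integers), I would form over $RGr(n,2n)\times HP^1$ the orthogonal sum of the inclusions of symplectic bundles
\[
V_n\boxtimes U_2\mono H^n_+\boxtimes U_2, \qquad H^{(n-i)/2}_+\boxtimes U^\perp_2\mono H^n_+\boxtimes U^\perp_2,
\]
\[
V^\perp_n\boxtimes H_-\mono H^n_+\boxtimes H_-, \qquad H^{(n+i)/2}_+\boxtimes H_-\mono H^n_+\boxtimes H_-.
\]
A rank count shows this is a rank-$8n$ symplectic subbundle of the rank-$16n$ symplectic bundle
\[
(H^n_+\boxtimes U_2)\perp(H^n_+\boxtimes U^\perp_2)\perp(H^n_+\boxtimes H_-)\perp(H^n_+\boxtimes H_-),
\]
which is isometric to $H^{8n}_-$. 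A choice of such isometry produces, for each $i$, a rank-$8n$ symplectic subbundle of $H^{8n}_-$, hence the morphism $g_n$.

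The remaining verifications follow Lemma~\ref{lem:structure-hgr-rgr} step by step. To make the square with the stabilizations $RGr(n,2n)\to RGr(n+1,2(n+1))$ and $HGr(8n,16n)\to HGr(8(n+1),16(n+1))$ commute, I would index $H^n_+$ as $\perp^n_{j=1}H^{(j)}_+$ and build the isometry to $H^{8n}_-$ as an orthogonal sum of the ``local'' isometries $(H^{(j)}_+\boxtimes U_2)\perp(H^{(j)}_+\boxtimes U^\perp_2)\perp(H^{(j)}_+\boxtimes H_-)\perp(H^{(j)}_+\boxtimes H_-)\cong H^8_-$; with this choice the two composites into $HGr(8(n+1),16(n+1))$ agree, which gives (3). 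For (1), I would pull back along the index $i=0$ together with the distinguished point of $RGr(n,2n)$; when $n$ is even the latter is $H^{n/2}_+\perp 0\mono H^n_+$ (and $0\in[-n,n]'$ exactly then), and the block isometry carries the resulting subbundle to the distinguished subbundle displayed in~(1). For (2), I would restrict the $HP^1$-factor to its distinguished point $x_0$, i.e.\ set $U_2=H_-\perp 0$; the subbundle then becomes an honest hyperbolic symplectic space inside $H^{16n}_-$, and a permutation of the summands brings it to the embedding of~(2). As in the proof of Theorem~\ref{iso_ksp}, every such permutation matrix in $Sp_{16n}$ is a product of symplectic elementary matrices $I+N$, each joined to the identity by the path $I+tN\in Sp_{16n}(R[t])$, so the two embeddings are $\mathbb{A}^1$-homotopic.

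The main obstacle is the bookkeeping of the $\ZZ$-grading. On the $HGr$ side of Lemma~\ref{lem:structure-hgr-rgr} the shift $i$ ranges freely over $[-n,n]$, but here it must be restricted to $[-n,n]'$ so that the symmetric degrees $(n\mp i)/2$ are integers, and one has to check that this restricted choice remains compatible with stabilization while forcing condition~(1) to hold precisely when $n$ is even --- which is just the statement that $0\in[-n,n]'$ if and only if $n$ is even. Producing a single coherent system of block isometries valid for all $n$ at once, rather than ad hoc level-by-level choices, is the delicate point; everything else is a mechanical transcription of the previous argument.
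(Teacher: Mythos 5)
Your proof is correct and takes essentially the same approach as the paper, which simply says the argument transcribes from Lemma~\ref{lem:structure-hgr-rgr} with a change of indices and records the defining subbundle; you have spelled the transcription out more carefully, and in fact the paper's displayed formula $(V_n\boxtimes V_1)\perp(H^{(n-i)/2}_+\boxtimes V^\perp_1)\perp(V^\perp_n\boxtimes H_+)\perp(H^{n+1})$ contains typos (the $HP^1$-factors should be the symplectic $U_2$, $U^\perp_2$, $H_-$ and the last summand $H^{(n+i)/2}_+\boxtimes H_-$) that your version corrects. Your remarks on the $[-n,n]'$ bookkeeping, the blockwise $H^8_-$ isometries giving compatibility with stabilization, and the equivalence of condition~(1)'s ``$n$ even'' with $0\in[-n,n]'$ are exactly the index-chasing the paper's one-line proof leaves implicit.
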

	\begin{proof}
		The proof is the same as for Lemma~\ref{lem:structure-hgr-rgr}, except for a change in indices. The morphism $g_n$ is defined on the $i^{th}$ component, where $i\in[-n,n]'$, by the subspace
		\begin{equation}
		(V_n\boxtimes V_1)\perp(H^{\frac{n-i}{2}}_+\boxtimes V^\perp_1)\perp (V^\perp_n\boxtimes H_+) \perp (H^{n+1})
		\end{equation}
	\end{proof}
	Using the morphisms of schemes $f_n$ and $g_n$ we can construct a naive $HP^1$-spectrum. 
	\begin{theorem}
          For any scheme $S$, there exists a naive $HP^1$-spectrum $\mathbf{KO}^{geo}_S$ 
          such that for all $i\geq 0$, $\mathbf{KO}^{geo}_{2i+1}=\ZZ\times HGr$ and  $\mathbf{KO}^{geo}_{2i}=\ZZ\times RGr$ as motivic spaces and $\Omega^2_{HP^1}\mathbf{KO}^{geo}\cong \mathbf{KO}^{geo}$ in $SH(S)_{HP^1}$.
	\end{theorem}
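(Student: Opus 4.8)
The plan is to build $\mathbf{KO}^{geo}_S$ directly as a naive $HP^1$-spectrum in the sense of Appendix~\ref{sec:naive-spectra}, i.e.\ as a sequence of pointed motivic spaces equipped with structure maps in $H_\bullet(S)$, and then read off the periodicity. I would put $\mathbf{KO}^{geo}_{2i}=\ZZ\times RGr$ (pointed by $r_0$) and $\mathbf{KO}^{geo}_{2i+1}=\ZZ\times HGr$ (pointed by $h_0$) for all $i\geq 0$, so that the sequence of spaces is $2$-periodic by construction. It then remains to (i) produce bonding maps $(\ZZ\times RGr)\wedge HP^1\to\ZZ\times HGr$ and $(\ZZ\times HGr)\wedge HP^1\to\ZZ\times RGr$ in $H_\bullet(S)$ out of the scheme morphisms $g_n$ and $f_{2n}$ of Lemmas~\ref{lem:structure-rgr-hgr} and~\ref{lem:structure-hgr-rgr}, and (ii) deduce $\Omega^2_{HP^1}\mathbf{KO}^{geo}\cong\mathbf{KO}^{geo}$.

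For (i), start with $f_{2n}\colon([-n,n]\times HGr(2n,4n))\times HP^1\to RGr(16n,32n)$. Property~(3) of Lemma~\ref{lem:structure-hgr-rgr} asserts that the $f_{2n}$ and the auxiliary $\mathbb{A}^1$-homotopies are compatible with the defining inclusions $HGr(2n,4n)\into HGr(2(n+1),4(n+1))$ and $RGr(16n,32n)\into RGr(16(n+1),32(n+1))$; since $\{RGr(16n,32n)\}_n$ is cofinal among the Grassmannians defining $\ZZ\times RGr$ and the $i$-th component of $[-n,n]\times HGr(2n,4n)$ is carried to a fixed component of $\ZZ\times RGr$ (the rule $i\mapsto c(i)$ being affine-linear and normalised by Property~(1) so that $0\mapsto 0$), passing to the colimit over $n$ yields a morphism of motivic spaces $\tilde f\colon(\ZZ\times HGr)\times HP^1\to\ZZ\times RGr$, and symmetrically $\tilde g\colon(\ZZ\times RGr)\times HP^1\to\ZZ\times HGr$. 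Properties~(1) and~(2) of Lemma~\ref{lem:structure-hgr-rgr} say exactly that $\tilde f$ restricts to the constant map at $r_0$ on $\{h_0\}\times HP^1$ and to a map $\mathbb{A}^1$-homotopic to the constant map at $r_0$ on $(\ZZ\times HGr)\times\{x_0\}$; these two homotopies both connect to the constant map and agree (being constant) over the point $(h_0,x_0)$, so they glue to a single $\mathbb{A}^1$-null-homotopy of $\tilde f|_{(\ZZ\times HGr)\vee HP^1}$. Feeding this null-homotopy into the cofiber sequence $(\ZZ\times HGr)\vee HP^1\to(\ZZ\times HGr)\times HP^1\to(\ZZ\times HGr)\wedge HP^1$ in $H_\bullet(S)$ produces a pointed map $\bar f\colon(\ZZ\times HGr)\wedge HP^1\to\ZZ\times RGr$ in $H_\bullet(S)$ whose composite with the quotient map is $\tilde f$ up to homotopy; likewise one obtains $\bar g\colon(\ZZ\times RGr)\wedge HP^1\to\ZZ\times HGr$ from $\tilde g$. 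Taking $\bar g$ as the bonding map out of every even level and $\bar f$ out of every odd level defines the naive $HP^1$-spectrum $\mathbf{KO}^{geo}_S$, which by Theorem~\ref{A:main} represents a well-defined object of $SH(S)_{HP^1}\cong SH(S)$.

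For (ii), recall that the shift-up functor $(E_0,E_1,\dots)\mapsto(E_1,E_2,\dots)$ on naive $HP^1$-spectra models $\Sigma_{HP^1}$ on $SH(S)_{HP^1}$, so that $\Omega_{HP^1}$ is its inverse; this is the standard relation between suspension and shift for spectra, among the facts recalled in Appendix~\ref{sec:naive-spectra}. Because the spaces and the bonding maps of $\mathbf{KO}^{geo}_S$ depend only on the parity of the index, its $2$-fold shift $(\mathbf{KO}^{geo}_2,\mathbf{KO}^{geo}_3,\dots)$ coincides with $\mathbf{KO}^{geo}_S$ on the nose. Hence $\Sigma^2_{HP^1}\mathbf{KO}^{geo}\cong\mathbf{KO}^{geo}$ in $SH(S)_{HP^1}$, and applying $\Omega^2_{HP^1}=\Sigma^{-2}_{HP^1}$ gives the claimed isomorphism $\Omega^2_{HP^1}\mathbf{KO}^{geo}\cong\mathbf{KO}^{geo}$.

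I expect the essential difficulty to be organisational rather than conceptual, concentrated in the colimit step of~(i): one must check that the component indices $i\in[-n,n]$ together with the ranks $16n,32n$ (resp.\ $8n,16n$ for $g_n$) assemble compatibly in $n$ into a single map targeting $\ZZ\times RGr$ (resp.\ $\ZZ\times HGr$), and that the two $\mathbb{A}^1$-homotopies furnished by Properties~(1)–(2) can be chosen to match over the point $(h_0,x_0)$ so as to yield an honest null-homotopy on the wedge. Once that is in place, the passage from a map on $X\times HP^1$ that is null-homotopic on $X\vee HP^1$ to a map on $X\wedge HP^1$ is formal from the cofiber sequence, and the periodicity is then immediate from the parity-periodic bookkeeping; I note only that this construction pins the bonding maps down merely up to homotopy, which is all a naive spectrum requires.
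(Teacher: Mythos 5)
Your proof is correct and follows essentially the same approach as the paper: pass the compatible scheme morphisms $f_{2n}$, $g_n$ to the colimit to get maps of ind-schemes, use the null-homotopy on the wedge furnished by properties~(1)--(2) of the two lemmas to descend to bonding maps on the smash product in $H_\bullet(S)$, and invoke Theorem~\ref{A:main} plus the $2$-periodic bookkeeping to get an object of $SH(S)_{HP^1}$ with $\Omega^2_{HP^1}\mathbf{KO}^{geo}\cong\mathbf{KO}^{geo}$. One small inaccuracy: the ``shift models $\Sigma_{HP^1}$'' fact you lean on for periodicity is standard but is not actually among the things recalled in Appendix~\ref{sec:naive-spectra}; the paper just asserts that periodicity follows by construction.
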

		Here we are abusing notation and using $\mathbf{KO}_S^{geo}$ to mean both the naive $HP^1$-spectrum and the associated object in $SH(S)$.	
        \begin{proof}
          From Lemma~\ref{lem:structure-hgr-rgr} and Lemma~\ref{lem:structure-rgr-hgr} we get morphisms of ind-schemes, 
          \begin{equation}\label{eq:struct_ko}
          f:HP^1\times\ZZ\times HGr\to RGr\quad g:HP^1\times\ZZ\times RGr\to HGr
          \end{equation}
          such that $f_{|HP^1\vee HGr}$ and $g_{|HP^1\vee RGr}$ are $\mathbb{A}^1$-homotopic to the constant zero morphism. These induce structure maps $f:HP^1\wedge_{\mathbf{L}}(\ZZ\times HGr)\to \ZZ\times RGr$ and $g:HP^1\wedge_{\mathbf{L}}(\ZZ\times RGr)\to \ZZ\times HGr$ in $H_\bullet(S)$, where $\wedge_{\mathbf{L}}$ is the left derived functor of the smash product.
          Periodicity follows by construction.
        \end{proof}
	The periodicity above implies $(8,4)$-periodicty in the standard bigrading of motivic spectra.  
	\begin{remark}
		Our construction of $\mathbf{KO}^{geo}$ is along the same lines as the construction of $\mathbf{BO}$ in \cite{PW18}. However we have shown that the condition of $2$ being invertible is not needed. We have also used the theory of naive spectra elaborated in Appendix \ref{sec:naive-spectra} which simplifies that construction of the structure maps of $\mathbf{KO}^{geo}$ and shows that  only the class of the structure maps in $H_\bullet(S)$ matter. 
	\end{remark}  
	\section{Properties of KO\textsuperscript{geo}} \label{sec:prop}
	Having constructed $\mathbf{KO}^{geo}$, we will look at some of its properties. The first notable property is that it is absolute in the following sense.

	\begin{theorem}\label{BO_pullback}
		For any morphism $f:S \to T$ of schemes, there is a canonical isomorphism $Lf^*\mathbf{KO}^{geo}_{T}\iso \mathbf{KO}^{geo}_{S}$ in $SH(S)$. In particular, for any scheme $S$, $\mathbf{KO}^{geo}_S$ is isomorphic to the pullback of $\mathbf{KO}^{geo}_{\ZZ}=\mathbf{KO}^{geo}_{\Spec(\ZZ)}$ by the structure map $S\to \Spec(\ZZ)$.
	\end{theorem}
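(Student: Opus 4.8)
The plan is to use that $\mathbf{KO}^{geo}$ is assembled entirely out of \emph{absolute} data --- Grassmannian schemes and structure morphisms defined by linear algebra of bilinear forms over the structure sheaf --- and that $Lf^*$ commutes with all the constructions used to put these together. Recall that $f^*\colon sPSh_\bullet(T)\to sPSh_\bullet(S)$ is a left Quillen functor for the $\mathbb{A}^1$-Nisnevich-local model structures, given on a smooth $T$-scheme $X$ by $f^*X=X\times_T S$, and that its left derived functor $Lf^*$ is triangulated and strong symmetric monoidal, hence commutes with (homotopy) colimits and with $-\wedge_{\mathbf{L}}-$. On the representable presheaves and filtered colimits below, $f^*$ already computes $Lf^*$, as these are cofibrant and the colimits in question compute homotopy colimits. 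Since $RGr(r,n)$, $HGr(2r,2n)$ and $HP^1=HGr(2,4)$ represent functors cut out by conditions on bilinear forms over $\mathcal{O}$, there are canonical base-change isomorphisms $f^*RGr_T(r,n)\iso RGr_S(r,n)$, $f^*HGr_T(2r,2n)\iso HGr_S(2r,2n)$ and $f^*HP^1_T\iso HP^1_S$, compatible with the transition morphisms and with composition of morphisms of schemes; passing to the filtered colimits of Section~\ref{sec:geo_spec} one gets canonical isomorphisms $Lf^*\bigl((\ZZ\times HGr)_T\bigr)\iso(\ZZ\times HGr)_S$ and $Lf^*\bigl((\ZZ\times RGr)_T\bigr)\iso(\ZZ\times RGr)_S$ in $H_\bullet(S)$ carrying $h_0,r_0$ to $h_0,r_0$.

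Next I would check that $f^*$ carries the ind-scheme morphisms $f$ and $g$ of~\eqref{eq:struct_ko} over $T$ to the corresponding ones over $S$. These are built in Lemmas~\ref{lem:structure-hgr-rgr} and~\ref{lem:structure-rgr-hgr} from tensor products of symmetric and symplectic forms, inclusions of subbundles, and fixed permutation isometries --- all stable under base change --- and the $\mathbb{A}^1$-homotopies trivialising $f_{|HP^1\vee HGr}$ and $g_{|HP^1\vee RGr}$ are explicit polynomial paths (of the form $I+tN$ with $N$ nilpotent, as in the proof of Theorem~\ref{iso_ksp}) which pull back verbatim. Consequently the naive $HP^1$-spectrum obtained by applying $f^*$ levelwise to $\mathbf{KO}^{geo}_T$ is canonically isomorphic, as a naive $HP^1$-spectrum over $S$, to $\mathbf{KO}^{geo}_S$; monoidality of $Lf^*$ together with $Lf^*HP^1_T\iso HP^1_S$ makes this identification respect the structure maps in $H_\bullet(S)$. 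Moreover the equivalence $(HP^1,x_0)\cong T^{\wedge2}$ of Theorem~\ref{th:HP1isT2} is itself built from base-change-stable geometry ($\mathbb{G}_a$-quotients, vector-bundle projections, Nisnevich excision), so the chain $SH(S)_{HP^1}\cong SH(S)_{T^{\wedge2}}\cong SH(S)$ is natural in $S$. Applying Theorem~\ref{A:main} to pass from naive $HP^1$-spectra to objects of $SH(S)$ then yields $Lf^*\mathbf{KO}^{geo}_T\iso\mathbf{KO}^{geo}_S$, and the compatibility of all the base-change isomorphisms above with composition $S\to T\to T'$ shows that it is canonical. The final assertion is the special case $T=\Spec(\ZZ)$ with $f$ the structure morphism.

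I expect the main work to be bookkeeping rather than ideas. One must verify that $Lf^*$ of a naive spectrum is genuinely computed levelwise together with its structure maps in $H_\bullet(S)$ (this is where cofibrancy of the representables and the identification of the relevant sequential colimits with homotopy colimits enter), that the base-change isomorphisms for the Grassmannians are compatible with every transition map and with the isometries chosen in Lemmas~\ref{lem:structure-hgr-rgr}--\ref{lem:structure-rgr-hgr}, and that the two equivalences $SH(-)_{HP^1}\cong SH(-)_{T^{\wedge2}}\cong SH(-)$ form natural transformations of pseudofunctors in the base scheme. None of this is deep, but it is precisely the kind of coherence for which the appendix on naive spectra is intended, and it is the step I would be most careful about.
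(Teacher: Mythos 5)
Your argument matches the paper's own proof in substance and approach: both reduce the statement to base-change stability of the Grassmannians and of the tensor-product/inclusion structure maps, together with the fact that $f^*$ is a left Quillen functor (the paper cites \cite{PPR09} for this). You spell out more of the coherence bookkeeping (naturality of the $SH(-)_{HP^1}\cong SH(-)_{T^{\wedge2}}\cong SH(-)$ identifications, the role of Theorem~\ref{A:main}, and cofibrancy of the representables), which the paper leaves implicit, but the route is the same.
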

        \begin{proof}
          Let $f:S\to T$ be a morphism of schemes. Pullback induces canonical identifications $f^*(HGr(2r,2n)_{T})\cong HGr(2r,2n)_{S}$ and $f^*(\ZZ\times HGr_{T})\cong \ZZ\times HGr_{S}$. This pullback isomorphism is also compatible with the structure maps of $\mathbf{KO}^{geo}$, as the tensor product of forms is preserved under pullback. Using the closed model structure given in \cite{PPR09} $f^*$ is a left Quillen functor completing the proof. 
        \end{proof}
        The next interesting property is that $\mathbf{KO}^{geo}$ over $\Spec(\ZZ[\half])$ gives us back the motivic spectrum $\mathbf{BO}$ constructed in \cite{PW18}. This implies a corresponding representability result for hermitian K-theory. To state this result we will need a model of $\mathbf{BO}$ as a naive $HP^1$-spectrum. 
        For any rank $2$ symplectic bundle $(E,\phi)$ over a scheme $X$, the structure map $\phi:E\otimes E\to \mc{O}_X$ induces an isomorphism $\Lambda^2E\iso\mc{O}_X$. The canonical morphism $U_{2}\mono {H^{ 2}_-}$ over $HP^1$ restricts to a set of four maps $U_{2}\mono \mc{O}_{HP^1}$. The pair which factors through $U_{2}\mono H_-$ differ up to isomorphism only by a sign. Therefore denote these pairs by $(x_0,-x_0)$ and $(x_\infty,-x_\infty)$ respectively (this notation in consistent with the fact the these are isomorphisms when pulled back along points $x_0$ and $x_\infty$). Consider the symmetric form  
        \[
        \begin{tikzcd}
        \mc{O}_{HP^1}\arrow[r]\arrow[d,"-1"] & U_{2}\arrow[r,"x_0"]\arrow[d,"\phi_{2,4}"] &\mc{O}_{HP^1}\arrow[d,"1"]\\
        \mc{O}_{HP^1}\arrow[r,"(-x_\infty)^\vee"] & U^\vee_{2}\arrow[r] &\mc{O}_{HP^1}
        \end{tikzcd}
        \]
        in $Ch^b(Vect(HP^1\times X),-\eta,q)$, indexed from degrees $0$ to $2$. By construction this form is equal to $[U_{2}]-[H_-]$ in $KSp(HP^1\times X)$ under \[KO^{[2]}(HP^1\times X)\iso KSp^{[0]}(HP^1\times X)\iso KSp(HP^1\times X)\] 
        and is the pullback of $\kappa(U_{2},\phi)$ along the zero section $z:HP^1\to U_{2}$. We will call this element of $KO_0^{[2]}(HP^1\times X)$ the \emph{Borel class} $-b_1(U_{2})$. The Borel class will give us the desired structure map. Let us denote by $\mathbf{BO}_{HP^1}$ the image of $\mathbf{BO}$ in $SH(S)_{HP^1}$.
        \begin{theorem}\label{ko_hp1}
        	Let $S$ be a regular Noetherian scheme of finite Krull dimension with $\half\in\Gamma(\mc{O}_S,S)$. The structure morphisms of $\mathbf{BO}_{HP^1}$ are represented by maps 
        	\[KO^{[n]}(-)\to KO^{[n+2]}(-\times HP^1),\quad C.\mapsto C.\boxtimes (-b_1(\mathrm{U}_{2}))\]
        	for all $n$.
        \end{theorem}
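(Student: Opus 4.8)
The plan is to unwind the passage from the $T$-spectrum $\mathbf{KO}$ of Definition~\ref{def:KO} to its image $\mathbf{KO}_{HP^1}$ in $SH(S)_{HP^1}$, keeping track of the structure maps at the level of Grothendieck--Witt spaces, and then to recognise the class that appears as the Borel class $-b_1(\mathrm{U}_2)$ described above.

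First I would make the structure maps explicit after passing to $T^{\wedge2}$-spectra. Iterating the structure equivalence $\otimes th(\mc{O},id)$ of $\mathbf{KO}$ twice gives, for every $n$, a weak equivalence
\[
\otimes th(\mc{O}^2,id)\colon KO^{[n]}(-)\iso KO^{[n+2]}\bigl(\mathbb{A}^2\times-,\ (\mathbb{A}^2-0)\times-\bigr),
\]
because the Koszul complex of $\mc{O}\oplus\mc{O}$ is the tensor product of the two line-bundle Koszul complexes, so $th(\mc{O}^2,id)$ is the exterior product $th(\mc{O},id)\boxtimes th(\mc{O},id)$, together with the standard identification $T^{\wedge2}\cong\mathbb{A}^2/(\mathbb{A}^2-0)$. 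Under the Quillen equivalence $\mathbf{Spt}(S)_T\simeq\mathbf{Spt}(S)_{T^{\wedge2}}$ these are precisely the structure morphisms of the $T^{\wedge2}$-spectrum underlying $\mathbf{KO}$.

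Next I would transport this along the isomorphism $\eta\colon(HP^1,x_0)\iso T^{\wedge2}$ of Theorem~\ref{th:HP1isT2}, and this is where the real work lies. Applying $KO^{[n+2]}_0(-\times X)$ to the zigzag of $\mathbb{A}^1$-equivalences built in the proof of Theorem~\ref{th:HP1isT2}, and invoking homotopy invariance together with the Thom isomorphism (Theorem~\ref{th:thom_iso}) for the relative groups that appear, it suffices to trace the Thom class $th(\mc{O}^2,id)$ through the excision isomorphism
\[
KO^{[n+2]}_0\bigl(HP^1\times X,(HP^1-N^+)\times X\bigr)\ \iso\ KO^{[n+2]}_0\bigl((N\cap HP^1)\times X,\dots\bigr)
\]
and through the contractions of $N^+$, $N^-$ and $HP^1-N^+$. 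The geometric input is that on the closed chart $N^+\subset HP^1=HGr(2,4)$ the tautological symplectic subbundle $\mathrm{U}_2$ trivialises and its form is $\mathbb{A}^1$-homotopic to the constant $H_-$; consequently $\kappa(\mathrm{U}_2,\phi)$ restricts on $N^+$ to $\kappa(\mc{O}^2,id)$, so the trivial-plane-bundle Thom class corresponds under excision to the relative Borel class of $\mathrm{U}_2$ supported on $N^+$. Its image in the absolute group $KO^{[n+2]}_0(HP^1\times X)$ is, by the definition recalled before the theorem, the Borel class $-b_1(\mathrm{U}_2)$ (which lies in the image of the relative group precisely because $\mathrm{U}_2|_{x_0}\cong H_-$, so $-b_1(\mathrm{U}_2)$ vanishes along $x_0$). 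Combining the two steps, the structure morphism $HP^1\wedge_{\mathbf{L}}KO^{[n]}\to KO^{[n+2]}$ of $\mathbf{KO}_{HP^1}$ is represented by $C.\mapsto C.\boxtimes(-b_1(\mathrm{U}_2))$, and since every map in sight is an $\mathbb{A}^1$-equivalence they all present the same morphism in $H_\bullet(S)$.

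The main obstacle is the sign-and-trivialisation bookkeeping in the middle step: one must follow $th(\mc{O}^2,id)$ through several excision and homotopy-invariance isomorphisms in the chain defining $\eta$ and reconcile the conventions on the Koszul duality $\theta(E)$ and on the pair $(x_0,-x_\infty)$ so that the output is exactly $-b_1(\mathrm{U}_2)$ rather than $b_1(\mathrm{U}_2)$ or a class differing by a hyperbolic summand; this mirrors the computations in \cite[Sec.~9--11]{PW18}. One could alternatively shortcut this by quoting the comparison $\mathbf{KO}\cong\mathbf{BO}$ of \cite{PW18} together with Panin--Walter's description of the structure maps of $\mathbf{BO}$, but a self-contained identification is preferable since it is what the later comparison with $\mathbf{KO}^{geo}$ will use.
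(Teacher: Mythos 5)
Your overall strategy is the same as the paper's: identify the structure map by tracing the class through the zigzag of Theorem~\ref{th:HP1isT2} and matching it against the exterior square $th(\mc{O},id)\boxtimes th(\mc{O},id)$. However, there is a concrete slip in the chart you use, and you leave unaddressed the one computation the proof actually hinges on.

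The slip: you say $\mathrm{U}_2$ trivialises ``on the closed chart $N^+$'' and that $\kappa(\mathrm{U}_2,\phi)$ restricts there to $\kappa(\mc{O}^2,id)$. The relative class $-b_1(\mathrm{U}_2)$ does live in $KO_0^{[n+2]}(HP^1,HP^1-N^+)$, so it is ``supported on $N^+$'' in that sense, but the zigzag in Theorem~\ref{th:HP1isT2} excises to $N^-/(N^--0)$, not $N^+$, and the identification with $T^{\wedge2}\cong\mathbb{A}^2/(\mathbb{A}^2-0)$ is obtained by pulling back along $N^-\cong\mathbb{A}^2\to HP^1$. The chart you need is $N^-$.

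The substantive gap is in what you call the ``main obstacle.'' Saying that $\mathrm{U}_2$ trivialises and that its form is $\mathbb{A}^1$-homotopic to $H_-$ is not enough to conclude; those facts alone would produce a class differing from the Thom class by a hyperbolic summand. What the paper actually verifies, and what closes the argument, is a section-level computation: the canonical morphism $\mathrm{U}_2\xrightarrow{x_0}\mc{O}_{HP^1}$ pulls back along $N^-\to HP^1$ to the generic section $(t_0,t_1)\colon\mc{O}_{\mathbb{A}^2}^{\oplus2}\to\mc{O}_{\mathbb{A}^2}$. Substituting this into the explicit two-term symmetric complex defining $-b_1(\mathrm{U}_2)$ produces exactly the chain complex of $th(\mc{O},id)\boxtimes th(\mc{O},id)$, sign conventions and all. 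Your proposal gestures at this reconciliation but never performs it, and without it the identification of $-b_1(\mathrm{U}_2)$ (as opposed to $+b_1$, or a class off by a hyperbolic form) is unjustified.

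Minor point on the first step: you are right that $th(\mc{O}^2,id)=th(\mc{O},id)\boxtimes th(\mc{O},id)$ via multiplicativity of Koszul complexes, and the paper implicitly uses this when rewriting $-b_1(\mathrm{U}_2)|_{N^-}$ as an external product; this part of your argument is fine.
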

        \begin{proof}
        	It is enough to show that the image of the Borel class under the zigzag of weak equivalences between $HP^1$ and $T^{\wedge 2}$ is $th(\mc{O},id)\boxtimes th(\mc{O},id)$. Firstly the sequence $\mc{O}_{HP^1}\to U_{2}\xrightarrow{x_0}\mc{O}_{HP^1}$ is exact when restricted to $HP^1- N^+$. Hence $-b_1(U_{2})$ is an element of $KO_0^{[n+2]}(HP^1,HP^1- N^+)$. The pullback of the morphism of vector bundles $U_{2}\xrightarrow{x_0}\mc{O}_{HP^1}$ along $\mathbb{A}^2\cong N^-\to HP^1$ is $\mc{O}_{\mathbb{A}^2}^{\oplus 2}\xrightarrow{(t_0,t_1)} \mc{O}_{\mathbb{A}^2}$, where $t_0$ and $t_1$ are the variables in $\mathbb{A}_{\Spec(R)}^2\cong \Spec(R[t_0,t_1])$. Therefore $-b_1(U_{2})$ becomes
        	\[
        	\begin{tikzcd}
        	\mc{O}_{\mathbb{A}^2}\arrow[r]\arrow[d,"-1"] &\mc{O}^{\oplus 2}_{\mathbb{A}^2} \arrow[d]\arrow[r,"(t_0{,} t_1)"] &\mc{O}_{\mathbb{A}^2} \arrow[d,"1"]\\
        	\mc{O}_{\mathbb{A}^2} \arrow[r,"(-t_1{,} -t_0)"] & \mc{O}^{\oplus 2}_{\mathbb{A}^2} \arrow[r] &\mc{O}_{\mathbb{A}^2}
        	\end{tikzcd}
        	\]
        	which is $th(\mc{O},id)\boxtimes th(\mc{O},id)$ as required.
        \end{proof} 
    	We can now state the desired result.
	\begin{theorem}
		There exists an isomorphism $\mathbf{KO}^{geo}\cong \mathbf{BO}_{HP^1}$ as objects in $SH(\Spec(\ZZ[\half]))$. Hence $\mathbf{KO}^{geo}$ represents hermitian K-theory over regular Noetherian schemes of finite Krull dimension with $\half\in\Gamma(S,\mc{O}_S)$.
	\end{theorem}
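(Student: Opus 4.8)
The plan is to exhibit $\mathbf{KO}^{geo}$ and $\mathbf{KO}_{HP^1}$ as naive $HP^1$-spectra with the same terms and then to match their bonding maps up to homotopy, which forces an isomorphism in $SH(\Spec(\ZZ[\half]))$. For the terms, over any scheme with $\half$ invertible the comparison $KSp^{\perp}\iso KSp^{[0]}$ of Section~\ref{sec:HKT} together with \eqref{spo_iso} upgrades Theorem~\ref{iso_ksp} to $\ZZ\times HGr\cong KSp^{\perp}\cong KSp^{[0]}\cong KO^{[2]}$, and the orthogonal counterpart of \cite[Thm.~8.2]{PW18} gives $\ZZ\times RGr\cong KO^{\perp}\cong KO^{[0]}$; after transporting $\mathbf{KO}$ through the Quillen equivalence $SH(S)_T\cong SH(S)_{T^{\wedge 2}}$ and the isomorphism $T^{\wedge 2}\cong HP^1$ of Theorem~\ref{th:HP1isT2}, the $HP^1$-spectrum $\mathbf{KO}_{HP^1}$ has terms $(KO^{[0]},KO^{[2]},KO^{[4]}\cong KO^{[0]},\dots)$. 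So both spectra have weakly equivalent terms in every degree, and it remains to compare the structure maps.

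By Theorem~\ref{ko_hp1} every bonding map of $\mathbf{KO}_{HP^1}$ is represented by $C.\mapsto C.\boxtimes(-b_1(U_2))$, i.e.\ by the external product with the Borel class $-b_1(U_2)\in KO^{[2]}_0(HP^1)$, which corresponds to $[U_2]-[H_-]$ under $KSp^{[0]}(HP^1)\iso KO^{[2]}(HP^1)$. On the geometric side, the bonding map $f\colon HP^1\wedge_{\mathbf{L}}(\ZZ\times HGr)\to\ZZ\times RGr$ is, on the $i$-th component of $[-n,n]\times HGr(2n,4n)$, classified by the rank $16n$ symmetric subbundle
\[ W=(U_{2n}\boxtimes U_2)\perp(H^{n-i}_-\boxtimes U^{\perp}_2)\perp(U^{\perp}_{2n}\boxtimes H_-)\perp(H^{n+i}_-\boxtimes H_-) \]
of Lemma~\ref{lem:structure-hgr-rgr}, and symmetrically for $g$ via Lemma~\ref{lem:structure-rgr-hgr}. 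The key computation is to expand the class of $W$ in $KO^{[0]}_0(HP^1\times HGr(2n,4n))$: substituting $[U^{\perp}_{2n}]=2n[H_-]-[U_{2n}]$ and $[U^{\perp}_2]=2[H_-]-[U_2]$ and using the isometry $H_-\otimes H_-\cong H^2_+$ (valid once $2$ is invertible), a short calculation with the bilinearity of $\boxtimes$ on $K$-classes gives
\[ [W]-8n[H_+]=\big([U_{2n}]-(n-i)[H_-]\big)\cdot\big([U_2]-[H_-]\big). \]
Since $[U_{2n}]-(n-i)[H_-]$ is precisely the class attached to the $i$-th component of $\ZZ\times HGr$ under $\ZZ\times HGr\cong KSp^{\perp}$ (compare the proof of Theorem~\ref{th:affKsp0}) and $[U_2]-[H_-]=-b_1(U_2)$, this says that $f$ agrees with the Borel-class bonding map at the level of $KO^{[0]}_0$; conditions (1)--(2) of Lemma~\ref{lem:structure-hgr-rgr} pin down the matching of connected components, and the analogous calculation handles $g$.

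The hard part will be promoting this equality of $K$-theory classes to an equality of homotopy classes of maps, so that the term-wise weak equivalences assemble into a morphism of naive $HP^1$-spectra (hence into an isomorphism in $SH$). Here I would use unstable representability when $\half$ is invertible: by Theorem~\ref{ko:unstable_rep}, for $X\in Sm_S$ one has $[(HP^1\times X)_+,KO^{[0]}]_{\mathbb{A}^1}\cong KO^{[0]}_0(HP^1\times X)$, and the reduced hom-set $[HP^1\wedge X_+,KO^{[0]}]_{\mathbb{A}^1}$ is the direct summand split off by the basepoint, so a pointed map into $\ZZ\times RGr\simeq KO^{[0]}$ is determined up to homotopy by its class; as $KO^{[0]}_0$ sends the filtered colimit defining $\ZZ\times HGr$ to the corresponding limit, matching classes on the finite stages $HP^1\times([-n,n]\times HGr(2n,4n))$ -- done in the previous paragraph -- suffices, and likewise for $g$. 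This yields a level-wise equivalence $\mathbf{KO}^{geo}\iso\mathbf{KO}_{HP^1}$ commuting with bonding maps, hence an isomorphism in $SH(\Spec(\ZZ[\half]))$. Finally, every ingredient above is available over an arbitrary regular Noetherian scheme $S$ of finite Krull dimension with $\half\in\Gamma(S,\mc{O}_S)$ (or one reduces to $S=\Spec(\ZZ[\half])$ using Theorem~\ref{BO_pullback} together with the base-change property of hermitian K-theory for regular schemes), so $\mathbf{KO}^{geo}_S\cong\mathbf{KO}$ in $SH(S)$; since $\mathbf{KO}$ represents hermitian K-theory over such $S$ by Corollary~\ref{cor:rep_main} and Theorem~\ref{thm:rep_main2}, so does $\mathbf{KO}^{geo}$.
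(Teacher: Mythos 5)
Your overall strategy matches the paper's: match the terms, then match the bonding maps by a computation with Grothendieck--Witt classes, then cite representability. There are, however, two genuine differences, one of which is a gap.

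First, a route difference: you compare the single bonding map $f\colon HP^1\wedge(\ZZ\times HGr)\to\ZZ\times RGr$, which requires identifying $\ZZ\times RGr\simeq KO^{[0]}$ (an orthogonal analogue of Theorem~\ref{iso_ksp}, available from \cite[Thm.~8.2]{PW18} once $\half$ is invertible). The paper instead compares the \emph{composite} of two bonding maps, i.e.\ works with the square $HP^1\wedge HP^1\wedge(\ZZ\times HGr)\to\ZZ\times HGr$ against $HP^1\wedge HP^1\wedge KO^{[4n-2]}\to KO^{[4n+2]}$. This keeps both sides in the symplectic world and only requires the isomorphism $\tau\colon\ZZ\times HGr\iso KSp$ that the paper actually proves; it implicitly passes to $HP^1\wedge HP^1$-spectra, which gives the same object of $SH$. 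Your route is equally valid here since over $\Spec(\ZZ[\half])$ the $RGr/KO$ identification is available; the computation $[W]-8n[H_+]=([U_{2n}]-(n-i)[H_-])\cdot([U_2]-[H_-])$ checks out and is parallel to what the paper does in one more step.

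Second, and more seriously, your passage from the finite Grassmannians to the colimit is too quick. You assert that ``$KO^{[0]}_0$ sends the filtered colimit defining $\ZZ\times HGr$ to the corresponding limit, [so] matching classes on the finite stages suffices.'' This is not automatic: for $X=\colim_n X_n$, the natural map $[X_+,\mathcal{Y}]_{\mathbb{A}^1}\to\lim_n[X_{n,+},\mathcal{Y}]_{\mathbb{A}^1}$ has a potential $\lim^1$ obstruction, so a compatible family of homotopy classes on the finite stages need not lift (let alone uniquely) to the colimit. This is precisely the point the paper treats carefully: it reduces to showing the restriction maps $[S^1_s\wedge HGr(2n+2,4n+4)_+,KO^{[k]}]_{\mathbb{A}^1}\to[S^1_s\wedge HGr(2n,4n)_+,KO^{[k]}]_{\mathbb{A}^1}$, i.e.\ $KO^{[k]}_1(HGr(2n+2,4n+4))\to KO^{[k]}_1(HGr(2n,4n))$, are surjective, which follows from \cite[Thm.~11.4]{PW10}. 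Without some such Mittag--Leffler argument your assembly of the level-wise equivalences into a morphism of naive $HP^1$-spectra is unjustified, so you should insert that step.
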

	\begin{proof}
		The isomorphisms $\tau:\ZZ\times HGr\iso KSp$ in $H_\bullet(S)$ follows from Theorem~\ref{iso_ksp}. To show that the diagram
		\[
		\begin{tikzcd}
		HP^1\wedge_{\mathbf{L}} HP^1\wedge (\ZZ\times HGr)\arrow[r]\arrow[d,"1\wedge1\wedge\tau_{4n-2}"] &\ZZ\times HGr\arrow[d,"\tau{4n+2}"]\\
		HP^1\wedge_{\mathbf{L}} HP^1\wedge_{\mathbf{L}} KO^{[4n-2]}\arrow[r] & KO^{[4n+2]}
		\end{tikzcd}
		\]
		commutes in $H_\bullet(S)$, we use the fact that $\ZZ\times HGr$ is an ind-scheme and hence the restriction $\{i\}\times HGr(2r,2n)\into \ZZ\times HGr\iso KSp$ is classified by an element in $KSp_0(HGr(2r,2n))$. The map $BSp_{2n}\to Bi\mathbf{Symp}$ in $H_\bullet(S)$ is given at the level of schemes by sending principal $Sp_{2n}$-bundles to the associated symplectic bundle. Consequently the map $HGr(2r,2n)\to BSp_{2n}\to Bi\mathbf{Symp}$ corresponds to the tautological symplectic bundle $U_{2r,2n}\to HGr(2r,2n)$. From this and the definition of the map $\ZZ\times  M_\infty\to R\Omega^1BM$, it follows that the isomorphism $\tau:\ZZ\times HGr\to KSp$ satisfies 
		\begin{multline*}
		\tau_{|\{i\}\times HGr(2r,2n)}=[U_{2r,2n}]+(i-r)[H_-] \\\in KSp_0(HGr(2r,2n)) \to Hom_{H_\bullet(S)}(HGr(2r,2n),KSp).
		\end{multline*}
		The last map is an isomorphism over regular Noetherian schemes $S$ with $\half\in\Gamma(S,\mc{O}_S)$. We denote by $\tau_{4k+2}$ the composition 
		\[\ZZ\times HGr\xrightarrow{\tau} KSp\iso KO^{[4k+2]}\]
		where $KSp\iso KO^{[4k+2]}$ is the isomorphism \ref{spo_iso} for $n=0$.  We then have 
		\[
			{\tau_{4k+2}}_{|\{i\}\times HGr(2n,4n)}=([U_{2n,4n}]+(i-2n[H_-]))[2k+1]
		\] 
		and similarly the map $HP^1\wedge HP^1\wedge(\ZZ\times HGr)\to \ZZ\times HGr\to KSp$ restricted to $HP^1\times HP^1\times\{i\}\times HGr(2n,4n)$ is given by \[([U_{2}]-[H_-])\boxtimes([U_{2}]-[H_-])\boxtimes([U_{2n,4n}]+(i-2n)[H_-]).\] To see this note that $[U^\perp_{2n,4n}]=2n[H]-[U_{2n,4n}]$. But tensoring twice with $([U_{2}]-[H_-])$ is exactly the structure map of $\mathbf{BO}_{HP^1}$ (\ref{ko_hp1}) and hence the diagram commutes when restricted to the finite Grassmannians. As $\ZZ\times HGr$ is the colimit of $\{i\}\times HGr(2n,4n)$ we have a map
		\[Hom_{H_\bullet(S)}(\colim_n HGr(2n,4n),X)\to \lim_n  Hom_{H_\bullet(S)}(HGr(2n,4n),X)\]
		for any $X\in\mathbf{Spc}_\bullet(S)$. This is an isomorphism if \[Hom_{H_\bullet(S)}(S_s^1\wedge HGr(2n+2,4n+4),X)\to Hom_{H_\bullet(S)}(S_s^1\wedge HGr(2n,4n),X)\]
		is a surjection. To see this, take a fibrant replacement of $X$ to get the set of simplicial homotopy classes. Surjectivity then implies that we can lift a collection of homotopy classes, uniquely up to homotopy, to the colimit. This holds for $X=KO^{[k]}$ as then we have 
		\[Hom_{H_\bullet(S)}(S_s^1\wedge HGr(2n,4n),KO^{[k]})\cong KO^{[k]}_1(HGr(2n,4n))\] 
		and the maps $KO^{[k]}_i(HGr(2n+2,4n+4))\to KO^{[k]}_i(HGr(2n,4n))$ are surjections by \cite[Thm.~11.4]{PW10} applied to $KO_*^{[*]}$ which is a cohomology theory with a $-1$-commutative ring structure \cite[Thm.~1.4]{PW18}.
	\end{proof}
	\begin{remark}
		Note that we needed $\half\in\Gamma(S,\mc{O}_S)$ to get the isomorphism $KSp\iso KO^{[2k+1]}$ as only then do we have the identification between skew-symmetric and alternating forms. 
	\end{remark}
	We can also extend the cellularity result in \cite{RSO16} to arbitrary schemes. We will use the definition of cellular spectra from \cite{DI05}.
	\begin{definition}\label{cell_defn}
		For any scheme $S$, let $\mathbf{Spt}_{cell}(S)$ be the smallest full subcategory of $\mathbf{Spt}(S)$ satisfying the following:
		\begin{enumerate}
			\item One has $S^{p,q}\in\mathbf{Spt}_{cell}(S)$ for all $p,q\in\ZZ$.
			\item If $\mathbf{F}$ is stably equivalent to $\mathbf{E}$ for some $\mathbf{E}\in\mathbf{Spt}_{cell}(S)$ then $\mathbf{F}\in \mathbf{Spt}_{cell}(S)$.
			\item For any diagram $D\to \mathbf{Spt}_{cell}(S)$, $\hocolim D$ is in $\mathbf{Spt}_{cell}(S)$. 
		\end{enumerate}
		As $\mathbf{Spt}_{cell}(S)$ is closed under stable equivalences, it defines a subcategory $SH(S)_{cell}$ of $SH(S)$. We call elements of $\mathbf{Spt}_{cell}(S)$ \emph{cellular spectra}. 
		Given a morphism $f:S_1\to S_2$, we have the following.
	\end{definition}
		\begin{lemma}\label{cell_pullback}
		For any morphism of schemes $f:S_1\to S_2$, $Lf^*:SH(S_2)\to SH(S_1)$ restricts to a morphism of cellular objects $Lf^*:SH(S_2)_{cell}\to SH(S_1)_{cell}$.
	\end{lemma}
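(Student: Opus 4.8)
The plan is to use that $Lf^*$ is the left derived functor of a left Quillen functor and therefore commutes with homotopy colimits, together with the fact that it sends motivic spheres to motivic spheres. Recall from the proof of Theorem~\ref{BO_pullback} that, with the closed model structures of \cite{PPR09}, the functor $f^*:\mathbf{Spt}(S_2)\to\mathbf{Spt}(S_1)$ is left Quillen. Consequently $Lf^*$ preserves stable equivalences between (cofibrant) spectra and preserves homotopy colimits of arbitrary small diagrams, i.e. there is a canonical isomorphism $Lf^*(\hocolim D)\iso\hocolim(Lf^*\circ D)$ in $SH(S_1)$ for any diagram $D\to\mathbf{Spt}(S_2)$.

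The next step is to check that $Lf^*(S^{p,q})\cong S^{p,q}$ in $SH(S_1)$ for all $p,q\in\ZZ$. The bigraded spheres are smash powers (and formal desuspensions thereof) of the simplicial circle $S^1_s$, which is constant under base change, and of $\mathbb{G}_m=(\mathbb{A}^1\setminus 0,1)$, for which there is a canonical identification $f^*(\mathbb{G}_{m,S_2})\cong\mathbb{G}_{m,S_1}$; since $Lf^*$ is a monoidal left Quillen functor it commutes with smash products and with the relevant (de)suspensions, so these spheres are preserved.

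Now consider the full subcategory $\mc{D}\subseteq\mathbf{Spt}(S_2)$ of those $\mathbf{E}$ with $Lf^*\mathbf{E}\in\mathbf{Spt}_{cell}(S_1)$. By the previous two paragraphs $\mc{D}$ contains every $S^{p,q}$, is closed under stable equivalence (if $\mathbf{F}$ is stably equivalent to $\mathbf{E}\in\mc{D}$, then $Lf^*\mathbf{F}$ is stably equivalent to $Lf^*\mathbf{E}\in\mathbf{Spt}_{cell}(S_1)$, and $\mathbf{Spt}_{cell}(S_1)$ is closed under stable equivalence), and is closed under homotopy colimits (using $Lf^*(\hocolim D)\cong\hocolim(Lf^*\circ D)$ and closure of $\mathbf{Spt}_{cell}(S_1)$ under $\hocolim$). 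Since $\mathbf{Spt}_{cell}(S_2)$ is by Definition~\ref{cell_defn} the smallest full subcategory of $\mathbf{Spt}(S_2)$ with these three closure properties, we get $\mathbf{Spt}_{cell}(S_2)\subseteq\mc{D}$, which is exactly the assertion; passing to homotopy categories yields the restriction $Lf^*:SH(S_2)_{cell}\to SH(S_1)_{cell}$. The only point requiring care is that the homotopy colimits in Definition~\ref{cell_defn} be computed compatibly with $Lf^*$, but this is ensured by working in a model structure for which $f^*$ is left Quillen, so it is bookkeeping rather than a genuine obstacle.
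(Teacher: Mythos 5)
Your argument is correct and is essentially the paper's proof, which simply states that $Lf^*$ preserves all motivic spheres and homotopy colimits; you have just spelled out in full detail the ``smallest subcategory'' bookkeeping that the paper leaves implicit.
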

	\begin{proof}
		This follows from the fact that $Lf^*$ preserves all motivic spheres and homotopy colimits.
	\end{proof}
	We wish to prove the following.
	\begin{theorem}\label{th:cell}
		Let $S$ be any scheme. The motivic spectrum $\mathbf{KO}^{geo}_S$ is cellular.
	\end{theorem}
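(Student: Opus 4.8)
The plan is to reduce, by the base-change results already proved, to cellularity of $\mathbf{KO}^{geo}$ over $\Spec(\ZZ)$; then to express that spectrum as a homotopy colimit of shifted suspension spectra of the ind-Grassmannians $\ZZ\times RGr$ and $\ZZ\times HGr$; and finally to reduce to the (absolute) cellularity of the finite schemes $RGr(r,n)$ and $HGr(2r,2n)$. Steps 1--3 below are formal; the geometry is concentrated in Step~4.

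First I would invoke Theorem~\ref{BO_pullback}: if $f\colon S\to\Spec(\ZZ)$ is the structure map then $\mathbf{KO}^{geo}_S\cong Lf^*\mathbf{KO}^{geo}_{\ZZ}$ in $SH(S)$, and by Lemma~\ref{cell_pullback} the functor $Lf^*$ preserves cellularity; so it suffices to treat $S=\Spec(\ZZ)$. (One could equally argue directly over an arbitrary $S$, since every cell structure used below is pulled back from $\Spec(\ZZ)$.) Next, the object of $SH(\Spec(\ZZ))$ underlying the naive $HP^1$-spectrum $\mathbf{KO}^{geo}$ is, by the construction recalled in Appendix~\ref{sec:naive-spectra}, the homotopy colimit $\hocolim_n\Sigma^{-n}_{HP^1}\Sigma^\infty_{HP^1}\mathbf{KO}^{geo}_n$ along the maps induced by the structure morphisms. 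Since $HP^1\cong S^{4,2}$ in $H_\bullet(S)$ (Theorem~\ref{th:HP1isT2}), the negative suspension $\Sigma^{-1}_{HP^1}$ is $S^{-4,-2}\wedge(-)$, which carries spheres to spheres and commutes with homotopy colimits, hence preserves cellularity. By conditions (1) and (3) of Definition~\ref{cell_defn} it is therefore enough to show that the (pointed) suspension spectra $\Sigma^\infty(\ZZ\times RGr)$ and $\Sigma^\infty(\ZZ\times HGr)$ are cellular.

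For this I would write $\ZZ\times HGr=\colim_n\big([-n,n]\times HGr(2n,4n)\big)$ as a filtered colimit along closed immersions, which are cofibrations of motivic spaces, so the colimit already computes the homotopy colimit and $\Sigma^\infty(\ZZ\times HGr)\simeq\hocolim_n\Sigma^\infty\big([-n,n]\times HGr(2n,4n)\big)_+$; and likewise for $\ZZ\times RGr$ using the pieces $RGr(2n,4n)$ and $RGr(2n-1,4n-2)$. A finite disjoint union of cellular schemes has cellular suspension spectrum, because $\Sigma^\infty(X\sqcup Y)_+\simeq\Sigma^\infty X_+\vee\Sigma^\infty Y_+$ is a homotopy colimit of cellular spectra. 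Applying condition (3) of Definition~\ref{cell_defn} once more, the theorem reduces to the claim that $\Sigma^\infty_+ RGr(r,n)$ and $\Sigma^\infty_+ HGr(2r,2n)$ are cellular spectra over $\Spec(\ZZ)$.

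The hard part is this last step, the only place genuine geometric input is needed: one needs \emph{absolute} cellular filtrations of the symplectic and orthogonal Grassmannians. The quaternionic Grassmannians $HGr(2r,2n)$ admit, by the geometry of \cite{PW10}, a filtration $X=X_N\supset\cdots\supset X_0=\emptyset$ by closed subschemes whose successive quotients $X_i/X_{i-1}$ are (iterated) Thom spaces of vector bundles over smaller quaternionic Grassmannians; these charts --- like the $\mathbb{A}^1$-equivalences used in the proof of Theorem~\ref{th:HP1isT2} --- are defined by explicit formulas over $\ZZ$, hence make sense over $\Spec(\ZZ)$, and the orthogonal case is entirely parallel (this is essentially the cellularity argument of \cite{RSO16}, whose proof is insensitive to the base). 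Granting such a filtration, $\Sigma^\infty_+X$ is cellular by induction on $N$, using that the cofiber sequence $\Sigma^\infty_+X_{i-1}\to\Sigma^\infty_+X_i\to\Sigma^\infty(X_i/X_{i-1})$ is a homotopy pushout and that the suspension spectrum of a Thom space of a vector bundle over a cellular base is cellular (as in \cite{DI05}). Combining the four steps proves the theorem; I expect the real obstacle to be precisely the bookkeeping in Step~4 --- checking that the Schubert-type charts and inductive $\mathbb{A}^1$-equivalences giving these cell structures are defined integrally and compatible with base change, so that \cite{RSO16} genuinely extends from the field case to $\Spec(\ZZ)$.
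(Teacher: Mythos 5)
Your Steps 1--3 are essentially the paper's argument: reduce via pullback-invariance (Theorem~\ref{BO_pullback} and Lemma~\ref{cell_pullback}) and express $\mathbf{KO}^{geo}$ as $\hocolim_n\Sigma^{-4n,-2n}\Sigma^\infty(\ZZ\times HGr)$ so that, by Definition~\ref{cell_defn}(3), everything hinges on the finite Grassmannians. (The paper in fact works only with the odd-indexed levels $\ZZ\times HGr$, which is cofinal in the sequence, so the $RGr$ spaces never need to be treated separately; your plan of handling both is correct but does extra work.)

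The genuine gap is your Step~4, which you yourself flag as ``the real obstacle.'' You assert that $HGr(2r,2n)$ admits a filtration whose successive quotients are Thom spaces over smaller Grassmannians, defined over $\ZZ$, and then wave at \cite{RSO16}; but nothing in the proposal actually produces that filtration or explains why the constructions go through integrally. In the paper this is the content of the preceding lemma, which is phrased more carefully than ``$\Sigma^\infty_+HGr(2r,2n)$ is cellular'': it proves that the Thom spectrum of $U^{\oplus m}_{2r,2n}$ is finite cellular for all $m\geq 0$ by a double induction on $(r,n)$. The extra parameter $m$ is forced on you: removing the closed piece $N^+\subset HGr(2r,2n)$ yields, via \cite[Lem.~3.5]{Spi10}, a cofiber sequence $Th({U^{\oplus m}_{2r,2n}}_{|Y})\to Th(U^{\oplus m}_{2r,2n})\to Th(\pi_+^*U^{\oplus m+1}_{2r,2n-2})$, whose third term has one more copy of the tautological bundle, so an induction purely on $\Sigma^\infty_+HGr$ does not close up. The other missing ingredient is that the inductive step collapses $Th({U_{2r,2n}}_{|Y})$ to $S^{4m,2m}\wedge Th(U^{\oplus m}_{2r-2,2n})$ via a zig-zag of affine bundles from \cite[Thm.~5.1]{PW10}; for this one needs to know that Thom space formation takes these $\mathbb{A}^1$-equivalences to $\mathbb{A}^1$-equivalences. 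That is not automatic over a general base: it is Theorem~\ref{thm:thom-iso} of the appendix, which in turn requires the schemes involved to have rational points (secured in the paper by noting that $Y_2\to HGr(2r-2,2n-2)$ has a section) and the base to be ind-smooth over a Dedekind ring. None of these issues are addressed in your sketch, and they are precisely what makes the ``insensitive to the base'' claim nontrivial.
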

		To prove this we will first show that the suspension spectra $\Sigma^\infty HG(2r,2n)_+$ are cellular. They constitute the case $m=0$ of the following statement.
	\begin{lemma}
		Let $m\geq 0$. The suspension spectrum of the Thom space of $U^{\oplus m}_{2r,2n}$ on $HGr(2r,2n)$ is a finite cellular spectrum. 
	\end{lemma}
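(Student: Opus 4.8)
The plan is to prove, by induction on $n$, the following sharpening of the statement: for all $r$ with $0\le r\le n$ and all $m\ge 0$, the suspension spectrum $\Sigma^\infty\mathrm{Th}(U^{\oplus m}_{2r,2n})$ is a finite cellular spectrum. The parameter $m$ is carried along precisely so that the normal-bundle twists appearing in the induction can be absorbed. In the base case $r=0$ (or $n=r$), $HGr(2r,2n)$ is a point, $U_{2r,2n}=H^r_-$ is trivial of rank $2r$, hence $U^{\oplus m}_{2r,2n}$ is trivial of rank $2rm$ and $\mathrm{Th}(U^{\oplus m}_{2r,2n})\simeq T^{\wedge 2rm}\simeq S^{4rm,2rm}$ is a single cell. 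Throughout I use two formal facts, both immediate from Definition~\ref{cell_defn} once one notes that smashing with a fixed spectrum commutes with homotopy colimits: in a cofiber sequence of spectra, if two of the three terms are finite cellular then so is the third (suspensions and desuspensions, hence cofibers and fibers, of finite cellular spectra are finite cellular); and $S^{p,q}\wedge(-)$ preserves finite cellular spectra.

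For the inductive step, fix $r<n$ and write $\xi=U^{\oplus m}_{2r,2n}$. Here I would invoke the cell structure of quaternionic Grassmannians from \cite{PW10}: $HGr(2r,2n)$ carries a finite filtration $\varnothing=F_{-1}\subset F_0\subset\cdots\subset F_N=HGr(2r,2n)$ by closed subschemes whose locally closed strata $C_j=F_j\setminus F_{j-1}$ are smooth, and each $C_j$ is an affine-space bundle --- concretely an iterated $\mathbb{G}_a$-torsor, hence $\mathbb{A}^1$-contractible over its base and insensitive to the characteristic --- over a quaternionic Grassmannian $HGr(2r_j,2n_j)$ with $2n_j<2n$ (possibly just over the base scheme itself), compatibly with the tautological bundle. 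Since each $C_j$ is closed and smooth inside the smooth open subscheme $HGr(2r,2n)\setminus F_{j-1}$, the homotopy purity theorem \cite[Thm.~3.2.23]{MV99}, which is valid over an arbitrary base, yields cofiber sequences
\[
\mathrm{Th}\bigl(\xi|_{HGr(2r,2n)\setminus F_j}\bigr)\longrightarrow\mathrm{Th}\bigl(\xi|_{HGr(2r,2n)\setminus F_{j-1}}\bigr)\longrightarrow\mathrm{Th}\bigl(\xi|_{C_j}\oplus N_{C_j}\bigr),
\]
where $N_{C_j}$ is the normal bundle. Running these downward, from $HGr(2r,2n)\setminus F_N=\varnothing$ to $HGr(2r,2n)\setminus F_{-1}=HGr(2r,2n)$, reduces the claim to showing each $\Sigma^\infty\mathrm{Th}(\xi|_{C_j}\oplus N_{C_j})$ is finite cellular. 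Now $\xi|_{C_j}$ and $N_{C_j}$ are, up to $\mathbb{A}^1$-homotopy of their classifying maps, pulled back from $HGr(2r_j,2n_j)$ from bundles built out of $U_{2r_j,2n_j}$, its orthogonal complement $U^\perp_{2r_j,2n_j}$, and trivial bundles --- the non-split extensions that arise being resolved by the $\mathbb{A}^1$-equivalence between a parabolic subgroup and its Levi quotient, which again rests only on $\mathbb{A}^1$-contractibility of unipotent radicals. Using $U^\vee\cong U$ for symplectic bundles over any scheme, $U_{2r_j,2n_j}\perp U^\perp_{2r_j,2n_j}\cong H^{n_j}_-$ (trivial), and the isomorphism $HGr(2r_j,2n_j)\cong HGr(2n_j-2r_j,2n_j)$, $V\mapsto V^\perp$, exchanging $U$ and $U^\perp$, one rewrites $\xi|_{C_j}\oplus N_{C_j}$ as the pullback of $U^{\oplus m'}_{2r'_j,2n_j}\oplus\mathcal{O}^{a_j}$ for suitable $m',r'_j,a_j$. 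Because the Thom space of a pullback along an affine-space bundle is $\mathbb{A}^1$-equivalent to the Thom space downstairs, and $\mathrm{Th}(E\oplus\mathcal{O}^{a})\simeq S^{2a,a}\wedge\mathrm{Th}(E)$, one gets $\Sigma^\infty\mathrm{Th}(\xi|_{C_j}\oplus N_{C_j})\simeq S^{2a_j,a_j}\wedge\Sigma^\infty\mathrm{Th}(U^{\oplus m'}_{2r'_j,2n_j})$, which is finite cellular by the inductive hypothesis (since $2n_j<2n$), or a motivic sphere when $C_j$ lies over the base. Feeding these back through the cofiber sequences shows $\Sigma^\infty\mathrm{Th}(U^{\oplus m}_{2r,2n})$ is finite cellular; the case $m=0$ is the input needed for Theorem~\ref{th:cell}.

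The homotopy-theoretic packaging above is formal, so the real work is geometric: pinning down the cell structure of the \emph{symplectic} Grassmannian and, for each stratum, identifying its normal bundle together with the restriction of the tautological bundle, over a general base. This is more delicate than for ordinary Grassmannians --- the quotient $H^n_-/H^{n-1}_-$ has rank $2$, and the restriction of the symplectic form to a subbundle need not remain non-degenerate, which forces several strata rather than one --- but exactly this analysis is carried out in \cite{PW10}. The one genuinely new point is to observe that every ingredient used is insensitive to the residue characteristics and to the invertibility of $2$: the affine-space bundle structure of the strata rests only on $\mathbb{A}^1$-contractibility of unipotent groups, the normal-bundle identifications are pure linear algebra, and $U\cong U^\vee$ holds for symplectic bundles over an arbitrary scheme. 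Over a field this is the cellularity argument of \cite{RSO16}, and the above is its extension to $\Spec(\ZZ)$.
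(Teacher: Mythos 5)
Your argument is essentially the same as the paper's: both carry the auxiliary parameter $m$ so that normal-bundle twists can be absorbed, both do a double induction on $(r,n)$ with base case a point, and both rest on the same geometric input from \cite{PW10}, namely the closed subscheme $N^+\subset HGr(2r,2n)$ isomorphic to the normal bundle of $HGr(2r,2n-2)$ and the passage from its open complement to $HGr(2r-2,2n-2)$ by affine bundles, together with the observation that all of this is characteristic-free and defined over $\Spec(\ZZ)$.

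The one place where your packaging diverges slightly from the paper's, and where you overclaim a bit, is the invocation of a single global ``cell filtration $\varnothing=F_{-1}\subset\cdots\subset F_N=HGr(2r,2n)$ whose strata $C_j$ are affine-space bundles over quaternionic Grassmannians.'' The paper does not use such a filtration; it performs one open/closed decomposition per induction step and then recurses. In particular, for the open piece $Y=HGr(2r,2n)\setminus N^+$, \cite[Thm.~5.1]{PW10} only supplies a \emph{zig-zag} of affine bundles $Y\leftarrow Y_1\leftarrow Y_2\to HGr(2r-2,2n-2)$ (and moreover $Y_2\to HGr(2r-2,2n-2)$ has a section), not a direct affine-bundle structure on $Y$ over the smaller Grassmannian — so the ``iterated $\mathbb{G}_a$-torsor over $HGr(2r_j,2n_j)$'' description of the strata is not literally what the reference provides. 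This does not affect the validity of the argument: the Thom-space comparison only needs $\mathbb{A}^1$-equivalences compatible with the pulled-back tautological bundle, and the zig-zag delivers exactly that via Theorem~\ref{thm:thom-iso}. Two further cosmetic differences: the paper cites \cite[Lem.~3.5]{Spi10} for the Thom-space cofiber sequence where you invoke homotopy purity from \cite{MV99}, and the paper begins by reducing to $\Spec(\ZZ)$ via Lemma~\ref{cell_pullback}, whereas you instead note at the end that every step is base-change-invariant; both are fine.
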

	\begin{proof}
		By Lemma~\ref{cell_pullback} it is enough to prove this over $\Spec(\ZZ)$. The proof is by induction on $r$ and $n$. As $HGr(0,2n)\cong pt$, the statement holds for $r=0$. Extending the definitions in Theorem~\ref{th:HP1isT2} we define $N^+=HGr(2r,2n)\cap Gr(2r,0\oplus\mc{O}^{2r}\oplus\mc{O}^{2n})$ and $N^-=HGr(2r,2n)\cap Gr(2r,\mc{O}^{2r}\oplus 0\oplus\mc{O}^{2n})$ as closed subschemes of $HGr(2r,2n)$.  The direct sum of these bundles $N=N^+\oplus N^-$ is the normal bundle of the embedding $HGr(2r,2n-2)\to HGr(2r,2n)$ \cite[Thm~4.1]{PW10}. Let $Y$ be the open subscheme $HGr(2r,2n)\setminus N^+$, then by \cite[Lem.3.5]{Spi10} the cofiber of the map 
		\[Th({U^{\oplus m}_{2r,2n}}_{|Y})\to Th(U^{\oplus m}_{2r,2n}),\] 
		is isomorphic to $Th({U_{2r,2n}^{\oplus m}}_{|N^+}\oplus\mc{N})$ where $\mc{N}$ is the normal bundle of the closed embedding $N^+\to HGr(2r,2n)$. We have ${U_{2r,2n}}_{|N^+} \cong \pi^*_+ U_{2r,2n-2}$ by \cite{PW10} and in fact the proof shows us that $\pi^*_+ U_{2r,2n-2}\cong \mc{N}$. We therefore have a cofiber sequence
		\[Th({U^{\oplus m}_{2r,2n}}_{|Y})\to Th(U^{\oplus m}_{2r,2n})\to Th(\pi^*_+ U^{\oplus m+1}_{2r,2n-2}) .\]
		where $\pi_+$ is the structure map of a vector bundle. Therefore, the induced morphism $Th(\pi^*_+ U^{\oplus m+1}_{2r,2n-2})\to Th(U^{\oplus m+1}_{2r,2n-2})$  is an unstable weak equivalence. 
                By induction on $n$ we have reduced to showing that $\Sigma^\infty Th({U^{\oplus m}_{2r,2n}}_{|Y})$ is cellular. By \cite[Thm.~5.1]{PW10} we have a zig-zag
		\[Y\leftarrow Y_1 \leftarrow Y_2\to HG(2r-2,2n-2) \]
		where every map is an affine bundle, such that moreover there is an isomorphism of symplectic bundles  
		\[{U_{2r,2n}}_{|Y_2}\cong \mc{O}^{\oplus 2}_{Y_2}\oplus {U_{2r-2,2n}}_{|Y_2} \]
		and the map $Y_2\to HG(2r-2,2n-2)$ has a section by the proof of \cite[Thm.~5.2]{PW10}, whence every scheme in the sequence has a point. By  Theorem~\ref{thm:thom-iso} we then have equivalences 
		\[Th({U_{2r,2n}}_{|Y} )\simeq Th({U_{2r,2n}}_{|Y_2})\cong Th(\mc{O}_{Y_2}^{2m}\oplus {U^{\oplus m}_{2r-2,2n}}_{|Y_2})\simeq S^{4m,2m}\wedge Th(U^{\oplus m}_{2r-2,2n}) .\]
		Induction completes the proof.	
	\end{proof}
	This is enough to prove the required result.
	\begin{proof}[Proof of Theorem~\ref{th:cell}]
		The proof is essentially given in \cite{RSO16}. As before we have $\mathbf{KO}^{geo}=\hocolim_n \Sigma^{-4n,-2n}\Sigma^{\infty}\ZZ\times HGr$. Therefore by \cite[Lemma 3.4]{DI05} and Definition~\ref{cell_defn} (3) it is enough to show that $\Sigma^{\infty} HGr(2n,4n)_+$ is cellular for each $n$.
	\end{proof}
	\begin{appendices}
		\section{Naive spectra}\label{sec:naive-spectra}
	Throughout this section $\mc{C}$ is a pointed cofibrantly generated model category with fibrant and cofibrant replacement functors $R$ and $Q$ respectively. We denote by $H(\mc{C})$ the corresponding homotopy category. Given a Quillen adjunction
	\[(T,U,\eta):\mc{C}\leftrightarrows \mc{C}, \] 
	the category of $T$-spectra $Sp^\mathbb{N}(\mc{C},T)$ has as objects sequences $\{E_i\}_{i\geq 0}$ of objects in $\mc{C}$ along with assembly maps 
	\[e_i:T(E_i)\to E_{i+1}. \]  	
	The category $Sp^{\mathbb{N}}(\mc{C},T)$ inherits several model structures from $\mc{C}$. Here we will consider the levelwise and stable projective model structures given in \cite{Hov01}. We denote the associated homotopy categories by $H^l(Sp^{\mathbb{N}}(\mc{C},T))$ and $SH(\mc{C},T)$ respectively. 
	\begin{definition}[Naive spectra]\label{A_HS_def}
		A \emph{naive T-spectrum} $(E_\cdot,e_\cdot)$ is a sequence $\{E_n\}_{n\in \mathbb{N}}$ of objects in $H(\mc{C})$ equipped with assembly morphisms 
		\[e_n\in Hom_{H(\mc{C})}(\mathbf{L}T(E_n),E_{n+1}) .\]
		A morphism of naive $T$-spectra $\phi:(E_\cdot,e_\cdot)\to (F_\cdot,f_\cdot)$ is a collection of morphisms $\phi_n:E_n\to F_n$ in $H(\mc{C})$ such that the relevant diagrams commute in $H(\mc{C})$. We denote this category by $Sp^\mathrm{naive}(\mc{C},T)$.
	\end{definition}
	Any $T$-spectrum $\mathbf{E}$ defines canonically a naive $T$-spectrum $\naive(\mathbf{E})$ with underlying sequence $\mathbf{E}_n$ and assembly morphisms the image of
	\[\mathbf{L}T(\mathbf{E}_n)=T(Q\mathbf{E}_n)\to T(\mathbf{E}_n)\xrightarrow{e_n} \mathbf{E}_{n+1}\]
	in $H(\mc{C})$ for every $n$. This gives us a functor $\naive:Sp(\mc{C},T)\to Sp^{\mathrm{naive}}(\mc{C},T)$.
	\begin{remark}
		The notion of naive spectra presented here is a generalization of the one given in \cite{Riou06}.
	\end{remark}
	\begin{lemma}\label{l:surj}
		The functor $\naive:Sp^{\mathbb{N}}(\mc{C},T)\to Sp^{\mathrm{naive}}(\mc{C},T)$ is essentially surjective.
	\end{lemma}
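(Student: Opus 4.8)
The plan is to construct, for an arbitrary naive $T$-spectrum $(E_\cdot, e_\cdot)$, an honest $T$-spectrum $\mathbf{E}\in Sp^{\mathbb{N}}(\mc{C},T)$ together with an isomorphism $\naive(\mathbf{E})\cong (E_\cdot,e_\cdot)$ in $Sp^{\mathrm{naive}}(\mc{C},T)$. The key point is that the only data missing from a naive spectrum is a choice of genuine (point-set) assembly maps lifting the homotopy classes $e_n$; the obstruction to making such a choice compatibly is controlled by the cofibrancy/fibrancy available in a cofibrantly generated model category. First I would replace each $E_n$ by a cofibrant–fibrant object: since $\mc{C}$ has functorial replacements $Q$ and $R$, set $E_n' = RQ E_n$, which is isomorphic to $E_n$ in $H(\mc{C})$. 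Working with these representatives, $\mathbf{L}T(E_n') = T(Q E_n') \simeq T(E_n')$ because $E_n'$ is already cofibrant, so the homotopy class $e_n \in \mathrm{Hom}_{H(\mc{C})}(\mathbf{L}T(E_n'), E_{n+1}')$ is represented by a genuine morphism in $H(\mc{C})$ out of the cofibrant object $T(E_n')$.

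Next I would lift each $e_n$ to an actual morphism in $\mc{C}$. Since $T$ is a left Quillen functor, $T(E_n')$ is cofibrant, and $E_{n+1}'$ is fibrant, the canonical map $\mathrm{Hom}_{\mc{C}}(T(E_n'), E_{n+1}') \to \mathrm{Hom}_{H(\mc{C})}(T(E_n'), E_{n+1}')$ is surjective (this is the standard fact that homotopy classes between a cofibrant and a fibrant object are represented by honest maps). Choose such a lift $\tilde e_n : T(E_n') \to E_{n+1}'$ for each $n$. This produces a genuine $T$-spectrum $\mathbf{E} = (\{E_n'\}, \{\tilde e_n\})$.

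Finally I would check that $\naive(\mathbf{E}) \cong (E_\cdot, e_\cdot)$. The underlying sequence of $\naive(\mathbf{E})$ is $\{E_n'\}$, which is isomorphic to $\{E_n\}$ in $H(\mc{C})$ via the replacement maps $E_n' \to E_n$; these assemble into the components $\phi_n$ of the desired isomorphism of naive spectra. It remains to verify that the relevant squares commute in $H(\mc{C})$: by construction the assembly morphism of $\naive(\mathbf{E})$ in degree $n$ is the image in $H(\mc{C})$ of $\mathbf{L}T(E_n') = T(Q E_n') \to T(E_n') \xrightarrow{\tilde e_n} E_{n+1}'$, and since $E_n'$ is cofibrant the first map is a weak equivalence, so this image agrees with the class of $\tilde e_n$, which is $e_n$ by our choice of lift. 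Hence $\phi$ is a morphism of naive spectra and, being a levelwise isomorphism in $H(\mc{C})$, an isomorphism. I expect the only mildly delicate point to be bookkeeping the identification $\mathbf{L}T(E_n') \simeq T(E_n')$ and checking it is compatible with the definition of $\naive$; everything else is a direct application of the lifting property between cofibrant and fibrant objects.
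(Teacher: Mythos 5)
Your proposal is correct and follows essentially the same route as the paper: choose bifibrant models $E'_n$ of the $E_n$, use the cofibrant/fibrant representability of $H(\mc{C})$-morphisms to lift each $e_n$ to a genuine map $T(E'_n)\to E'_{n+1}$, and observe that the resulting $T$-spectrum has $\naive$-image isomorphic to $(E_\cdot,e_\cdot)$. The only difference is that you spell out the identification $\mathbf{L}T(E'_n)\simeq T(E'_n)$ and the final verification of the commuting squares, which the paper leaves implicit.
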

	\begin{proof}
		Given any naive $T$-spectrum $(E_n,e_n)_{n\in \mathbb{N}}$, choose bifibrant models $E'_n\in\mc{C}$ of $E_n$. We then have isomorphisms
		\[ Hom_{H(\mc{C})}(\mathbf{L}T(E_n),E_{n+1})\cong  Hom_{\mc{C}}(T(E'_n),E'_{n+1})/\simeq		 \]
		allowing us to choose a lift $e'_n:T(E'_n)\to E'_{n+1}$ of $e_n$ for every $n$. We then have a $T$-spectrum $\mathbf{E}'$ whose image under $\naive$ is isomorphic to $(E_n,e_n)_{n\in \mathbb{N}}$.	
	\end{proof}
	\begin{lemma}\label{l:iso}
		Let $\mathbf{E}=(E_0,E_1\ldots),\mathbf{F}=(F_0,F_1,\ldots)\in Sp^{\mathbb{N}}(\mc{C},T)$ with assembly maps $e_n:T(E_n)\to E_{n+1}$ and $f_n:T(F_n)\to F_{n+1}$ respectively. If there is an isomorphism $\phi:\naive(\mathbf{E})\iso \naive(\mathbf{F})$ in $Sp^{\mathrm{naive}}(\mc{C},T)$, then $\mathbf{E}$ and $\mathbf{F}$ are isomorphic in $H^l(Sp^{\mathbb{N}}(\mc{C},T))$.
	\end{lemma}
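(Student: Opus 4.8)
The plan is to rectify the homotopy-coherent data $\phi$ into an honest morphism of $T$-spectra that is a levelwise weak equivalence, and then to use that such a morphism becomes an isomorphism in $H^l(Sp^{\mathbb{N}}(\mc{C},T))$.

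First I would pass to convenient models. Because a levelwise weak equivalence $\mathbf{E}'\to\mathbf{E}$ induces both an isomorphism $\naive(\mathbf{E}')\iso\naive(\mathbf{E})$ (the functoriality of $\naive$ applied to a strict map all of whose components are weak equivalences) and an isomorphism in $H^l$, it suffices to prove the statement after replacing $\mathbf{E}$ and $\mathbf{F}$. For $\mathbf{E}$, build inductively a $T$-spectrum $\tilde{\mathbf{E}}$ with a levelwise weak equivalence $q\colon\tilde{\mathbf{E}}\to\mathbf{E}$: let $\tilde{\mathbf{E}}_0\to\mathbf{E}_0$ be a cofibrant replacement, and, given $q_n\colon\tilde{\mathbf{E}}_n\to\mathbf{E}_n$ with $\tilde{\mathbf{E}}_n$ cofibrant, factor $T(\tilde{\mathbf{E}}_n)\xrightarrow{e_n\circ T(q_n)}\mathbf{E}_{n+1}$ as a cofibration $\tilde e_n\colon T(\tilde{\mathbf{E}}_n)\mono\tilde{\mathbf{E}}_{n+1}$ followed by an acyclic fibration $q_{n+1}\colon\tilde{\mathbf{E}}_{n+1}\to\mathbf{E}_{n+1}$. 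Since $T$ is left Quillen and $\mc{C}$ is pointed, $T$ preserves the zero object and hence cofibrant objects, so every level of $\tilde{\mathbf{E}}$ is cofibrant and every assembly map of $\tilde{\mathbf{E}}$ is a cofibration; and $q$ is a levelwise weak equivalence by construction. For $\mathbf{F}$ take a levelwise fibrant replacement. We may thus assume that all levels of $\mathbf{E}$ are cofibrant, all assembly maps of $\mathbf{E}$ are cofibrations, and all levels of $\mathbf{F}$ are fibrant.

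Next I would construct $\psi\colon\mathbf{E}\to\mathbf{F}$ by induction on the level, maintaining the invariant that $\psi_n$ represents $\phi_n$ in $H(\mc{C})$ and that $\psi_{n+1}\circ e_n=f_n\circ T(\psi_n)$ strictly. Take $\psi_0$ a representative of $\phi_0$ (possible since $\mathbf{E}_0$ is cofibrant and $\mathbf{F}_0$ fibrant). Given $\psi_n$, choose any representative $\psi'$ of $\phi_{n+1}$. The square expressing that $\phi$ is a morphism of naive spectra forces $\psi'\circ e_n$ and $f_n\circ T(\psi_n)$ to become equal in $H(\mc{C})$; as $T(\mathbf{E}_n)$ is cofibrant and $\mathbf{F}_{n+1}$ is fibrant they are therefore right homotopic, say through $h\colon T(\mathbf{E}_n)\to\mathbf{F}_{n+1}^{I}$ for a good path object $\mathbf{F}_{n+1}^{I}$ (whose first projection $p_0$ is an acyclic fibration because $\mathbf{F}_{n+1}$ is fibrant). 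Form the pullback $Q=\mathbf{E}_{n+1}\times_{\mathbf{F}_{n+1}}\mathbf{F}_{n+1}^{I}$ of $\psi'$ along $p_0$; the projection $Q\to\mathbf{E}_{n+1}$ is an acyclic fibration, the pair $(e_n,h)$ is a section of it over $T(\mathbf{E}_n)$, and since $e_n$ is a cofibration this section extends over $\mathbf{E}_{n+1}$ to a right homotopy $\Gamma$ from $\psi'$ to $\psi_{n+1}:=p_1\circ\Gamma$. Then $\psi_{n+1}\circ e_n=p_1\circ h=f_n\circ T(\psi_n)$, while $\psi_{n+1}$ is homotopic to $\psi'$ and hence still represents $\phi_{n+1}$, completing the inductive step. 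The family $(\psi_n)$ assembles to a morphism $\psi\colon\mathbf{E}\to\mathbf{F}$ of $T$-spectra.

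Finally, each $\psi_n$ is sent to the isomorphism $\phi_n$ by the localization functor $\mc{C}\to H(\mc{C})$, and the weak equivalences of a model category are precisely the morphisms inverted in its homotopy category; hence every $\psi_n$ is a weak equivalence, $\psi$ is a levelwise weak equivalence, and therefore an isomorphism in $H^l(Sp^{\mathbb{N}}(\mc{C},T))$, giving $\mathbf{E}\cong\mathbf{F}$ there. The main obstacle is the inductive rectification step: turning the homotopy-commuting ladder carried by $\phi$ into a strictly commuting ladder of genuine maps. This is exactly where one uses that $\mathbf{E}$ has cofibration assembly maps, that $\mathbf{F}$ is levelwise fibrant, and the consequent availability of good path objects and of lifts of cofibrations against acyclic fibrations; once the levels have been replaced by these models the rest is a straightforward induction.
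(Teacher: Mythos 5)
Your proof is correct, and it takes a genuinely different route from the one in the paper. The paper also begins by passing to a bifibrant model (so that the ladder consists of honest maps with the squares commuting up to homotopy), but then it resolves the non-strictness by constructing a \emph{third} spectrum: for each $n$ it takes the mapping cylinder $M_{\phi_n}$ of $\phi_n$, uses a left homotopy $H_n$ on the cylinder $T(E_n\times I)$ to define assembly maps $T(M_{\phi_n})\to M_{\phi_{n+1}}$, and exhibits a zigzag of levelwise weak equivalences $\mathbf{E}\to Cyl(\phi)\leftarrow\mathbf{F}$. Your argument instead rectifies the ladder directly into a single strict morphism $\psi\colon\mathbf{E}\to\mathbf{F}$: by working with a cofibrant model of $\mathbf{E}$ (levels cofibrant, assembly maps cofibrations) and a fibrant model of $\mathbf{F}$, at each stage you trade the right-homotopy $\psi'e_n\simeq f_nT(\psi_n)$ for a genuine equality $\psi_{n+1}e_n=f_nT(\psi_n)$ by lifting the cofibration $e_n$ against the acyclic fibration $E_{n+1}\times_{F_{n+1}}F_{n+1}^I\to E_{n+1}$. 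The trade-offs: the paper's construction is more symmetric and resembles the usual factorization-by-mapping-cylinder argument, but it requires one to verify that $Cyl(\phi)$ really is a $T$-spectrum and that both inclusions are strict morphisms of spectra -- a verification that is in fact somewhat delicate, since the two ends $i_0,i_1$ of the cylinder interact differently with $H_n$ and $f_n$. Your version avoids that bookkeeping entirely: once $\mathbf{E}$ has cofibration assembly maps and $\mathbf{F}$ is levelwise fibrant, the inductive lifting produces the rectified map with no side conditions to check, at the cost of being asymmetric in $\mathbf{E}$ and $\mathbf{F}$. Both are legitimate proofs of the lemma; yours is arguably the cleaner one to write down carefully.
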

	\begin{proof}
		We have isomorphisms $\phi_n:E_n\iso F_n$ in $H(\mc{C})$ such that the diagrams
		\[
		\begin{tikzcd}
		\mathbf{L}T(E_n)\arrow[r]\arrow[d,"\mathbf{L}T(\phi_n)"] &E_{n+1}\arrow[d,"\phi_{n+1}"]\\
		\mathbf{L}T(F_n)\arrow[r] &F_{n+1}
		\end{tikzcd}
		\] 
		commute for all $n$. As $\mathcal{C}$ is cofibrantly generated, so is $H^l(Sp^\mathbb{N}(\mathcal{C},T))$ and hence it has a fibrant and cofibrant replacement functor \cite{Hov01}. Using the cofibrant-fibrant replacement of the spectra $\mathbf{E}$ and $\mathbf{F}$ we can reduce to case when the spectra are levelwise cofibrant-fibrant. In this case each $\phi_n$ lifts to a weak equivalence in $\mc{C}$ and commutativity of the diagrams implies that the two maps $\phi_{n+1}e_n$ and $f_nT(\phi_n)$ are homotopic \cite[Prop. 1.2.5]{Ho07}. For each $n\in\mathbb{N}$, let $E_n\times I$ be the functorial cylinder object for $E_n$. As $T(E_n)$ is cofibrant and $F_{n+1}$ is fibrant, $T(E_n\times I)$ is a cylinder object for $T(E_n)$ and there is a left homotopy
		\[
		\begin{tikzcd}
		T(E_n)\arrow[r,"e_n"]\arrow[d,"i_0"] &E_{n+1}\arrow[d,"\phi_{n+1}"]\\
		T(E_n\times I)\arrow[r,"H_n"] &F_{n+1}\\
		T(E_n)\arrow[r,"T(\phi_n)"]\arrow[u,"i_1"] &T(F_{n})\arrow[u,"f_n"],
		\end{tikzcd}		
		\]
		between $\phi_{n+1}e_n$ and $f_nT(\phi_n)$. The mapping cylinder $M_{\phi_n}$ is the pushout of the diagram 
		\[
		\begin{tikzcd}
		E_n\arrow[d,"i_1"]\arrow[r,"\phi_n"] &F_{n}\arrow[d]\\
		E_n\times I\arrow[r] &M_{\phi_n}
		\end{tikzcd}
		\]
		and hence the morphism $F_{n}\to M_{\phi_n}$ is an acyclic cofibration with a left inverse induced by $id_{F_{n}}:F_{n}\to F_{n}$ and $E_n\times I\to E_n\xrightarrow{\phi_n} F_n$. As $T$ is a left adjoint it preserves pushouts and hence $(T(\phi_n), H_n)$ induces a map 
		\[T(M_{\phi_n})\to F_{n+1}\to M_{\phi_{n+1}} .\]
		This gives us a spectrum $Cyl(\phi)=(M_{\phi_0},M_{\phi_1},\ldots)$ with levelwise weak equivalences $\mathbf{E}\to Cyl(\phi)$ and $\mathbf{F}\to Cyl(\phi)$ given by 
		\[
		E_n\xrightarrow{i_0} E_n\times I\to M_{\phi_n}\quad\text{and}\quad F_n \to M_{\phi_n},
		\]
		where $E_n\times I\to M_{\phi_n}$ is a weak equivalence by the 2-out-of-3 property. Hence $\mathbf{E}$ and $\mathbf{F}$ are isomorphic in $H(Sp^{\mathbb{N}}(\mc{C},T))$. 	
	\end{proof}   
	These two lemmas show that a naive $T$-spectrum defines a unique object in $H^l(Sp^{\mathbb{N}}(\mc{C},T))$ up to isomorphism.
	\begin{theorem}\label{A:main}
		For any naive $T$-spectrum $({E_n,e_n})_{n\in \mathbb{N}}$, there exists a $T$-spectrum $\mathbf{E}\in Sp^{\mathbb{N}}(\mc{C},T)$, with assembly maps $f'_n:T(\mathbf{E}_n)\to \mathbf{E}_{n+1}$, such that
		\begin{enumerate}
			\item $\naive(\mathbf{E})\iso (E_n,e_n)_{n\in \mathbb{N}}$ in $SH(\mc{C},T)$;
			\item any other $\mathbf{E}'\in Sp^{\mathbb{N}}(\mc{C},T)$ satisfying the above condition is isomorphic to $\mathbf{E}$ in $H^l(Sp^{\mathbb{N}}(\mc{C},T))$.
		\end{enumerate} 
		In particular every naive $T$-spectrum defines a unique object in $SH(\mc{C},T)$ up to isomorphism.
	\end{theorem}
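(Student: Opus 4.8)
The substantive content of the statement has already been isolated in Lemmas~\ref{l:surj} and~\ref{l:iso}; the plan is simply to combine them and then push the conclusion through the stable localisation. First I would apply Lemma~\ref{l:surj}: since $\naive:Sp^{\mathbb{N}}(\mc{C},T)\to Sp^{\mathrm{naive}}(\mc{C},T)$ is essentially surjective, there is a $T$-spectrum $\mathbf{E}\in Sp^{\mathbb{N}}(\mc{C},T)$, with some assembly maps $f'_n:T(\mathbf{E}_n)\to \mathbf{E}_{n+1}$, together with an isomorphism $\naive(\mathbf{E})\iso (E_n,e_n)_{n\in\mathbb{N}}$ in $Sp^{\mathrm{naive}}(\mc{C},T)$. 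Unwinding the definition of $Sp^{\mathrm{naive}}(\mc{C},T)$, this is precisely a compatible family of isomorphisms $\mathbf{E}_n\iso E_n$ in $H(\mc{C})$ intertwining the assembly morphisms, which is condition~(1) in the form required.

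For condition~(2), let $\mathbf{E}'\in Sp^{\mathbb{N}}(\mc{C},T)$ be any other $T$-spectrum with $\naive(\mathbf{E}')\iso (E_n,e_n)_{n\in\mathbb{N}}$. Composing this isomorphism with the inverse of the one constructed above gives an isomorphism $\naive(\mathbf{E})\iso\naive(\mathbf{E}')$ in $Sp^{\mathrm{naive}}(\mc{C},T)$ — the only thing used here is that $Sp^{\mathrm{naive}}(\mc{C},T)$ is a genuine category, so that isomorphisms may be inverted and composed. Lemma~\ref{l:iso} then applies directly and yields an isomorphism $\mathbf{E}\iso\mathbf{E}'$ in the levelwise homotopy category $H^l(Sp^{\mathbb{N}}(\mc{C},T))$. (Note that nothing here is claimed to be natural in the naive spectrum, nor is the isomorphism of~(2) claimed to be unique — only its existence is asserted, which is all that Lemma~\ref{l:iso} provides.)

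Finally, for the last clause, recall that $SH(\mc{C},T)$ is obtained from $H^l(Sp^{\mathbb{N}}(\mc{C},T))$ by the stable left Bousfield localisation of \cite{Hov01}, so there is a localisation functor $H^l(Sp^{\mathbb{N}}(\mc{C},T))\to SH(\mc{C},T)$ which is the identity on objects. Any functor carries isomorphic objects to isomorphic objects, so the isomorphism $\mathbf{E}\iso\mathbf{E}'$ of~(2) descends to an isomorphism in $SH(\mc{C},T)$; together with~(1) this shows that the object of $SH(\mc{C},T)$ determined by a naive $T$-spectrum is well defined up to (not necessarily unique) isomorphism. The main subtlety to watch — and really the place where the work lives — is entirely inside Lemma~\ref{l:iso}, namely the passage from homotopy-commuting squares in $H(\mc{C})$ to an actual levelwise weak equivalence of spectra via the mapping cylinder $Cyl(\phi)$; the present theorem is a formal corollary once that lemma is in hand.
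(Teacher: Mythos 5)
Your proof is correct and follows exactly the same route as the paper: Lemma~\ref{l:surj} gives part~(1), Lemma~\ref{l:iso} gives part~(2) after composing isomorphisms in $Sp^{\mathrm{naive}}(\mc{C},T)$, and the last clause follows because levelwise weak equivalences are stable equivalences (equivalently, the localisation functor $H^l(Sp^{\mathbb{N}}(\mc{C},T))\to SH(\mc{C},T)$ preserves isomorphisms). The paper's own proof states this in three sentences; yours simply spells out the same steps.
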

	\begin{proof}
		Statement (2) follows from statement (1) and Lemma~\ref{l:iso}. Statement (1) is just a reformulation of Lemma~\ref{l:surj}. The last sentence follows because every levelwise weak equivalence is a stable equivalence.
	\end{proof}
	
	\begin{remark}
		Using the projective model structure is not necessary; all we need is a model structure on $Sp^\mathbb{N}(\mathcal{C},T)$ where the weak equivalences are precisely the levelwise weak equivalences and every bifibrant object is also levelwise bifibrant. Therefore the injective model structure works as well.
	\end{remark}

	\section{Thom spaces}
	The Thom space construction preserves $\mathbb{A}^1$-weak equivalences for sufficiently well behaved schemes. Let $S$ be a scheme which is ind-smooth over a Dedekind ring $k$ with perfect residue fields. 
	\begin{theorem}\label{thm:thom-iso}
		Let $X\in Sm_{S}$ be a smooth $S$-scheme with a rational point $x:S\to X$. For any $\mathbb{A}^1$-equivalence of pointed smooth schemes $f:(Y,y)\to (X,x)$ and any vector bundle $E\to X$ of constant rank $n$, the induced map of Thom spaces $Th(f^*E)\to Th(E)$ is an $\mathbb{A}^1$-equivalence.
	\end{theorem}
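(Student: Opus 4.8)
The plan is to present $Th(E)$ as a homotopy cofibre and reduce the statement to one about complements of zero sections, which is then settled by affine representability. Since the open immersion $E\setminus 0\into E$ is a monomorphism of presheaves, it is a cofibration between cofibrant objects, so $Th(E)=E/(E\setminus 0)$ is a model for the homotopy cofibre of $(E\setminus 0)_+\to E_+$ in pointed motivic spaces, and similarly for $f^*E$. The morphism $f$ induces a map of such cofibre sequences whose two remaining components are the bundle map $f^*E\to E$ and the map of zero-section complements $(f^*E)\setminus 0\to E\setminus 0$; by the gluing lemma for homotopy pushouts it suffices to prove that both of these are $\mathbb{A}^1$-equivalences. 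For $f^*E\to E$ this is immediate: in the cartesian square with bottom arrow $f$ and vertical arrows the bundle projections $f^*E\to Y$ and $E\to X$ (which are $\mathbb{A}^1$-equivalences, vector bundles being $\mathbb{A}^1$-equivalent to their base), two-out-of-three gives the claim. So the whole content lies in showing that $(f^*E)\setminus 0\to E\setminus 0$ is an $\mathbb{A}^1$-equivalence.

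First I would reduce to the case where $X$ and $Y$ are affine. Using Jouanolou's device, choose an affine vector-bundle torsor $p_X\colon\widetilde X\to X$ (so $p_X$ is an $\mathbb{A}^1$-equivalence which is Zariski-locally a projection $\mathbb{A}^N\times U\to U$) and set $\widetilde Y=Y\times_X\widetilde X$; then $\widetilde Y$ is affine, $\widetilde Y\to\widetilde X$ is an $\mathbb{A}^1$-equivalence by two-out-of-three, and $p_X^*E$ pulls back along $\widetilde Y\to\widetilde X$ to $p_Y^*f^*E$. It remains to know that $Th(-)$ is invariant along an affine-bundle torsor, which by the previous paragraph reduces to $p_X^*E\setminus 0\to E\setminus 0$ being an $\mathbb{A}^1$-equivalence; this last fact follows from the case of a trivial torsor by Nisnevich descent, both sides being the homotopy colimit of the \v{C}ech diagram of a common Zariski cover of $X$ over which everything trivializes. (For $X$ not covered by quasi-projective opens one performs this step Nisnevich-locally.) Thus $X$ and $Y$ may be assumed affine.

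For affine $X\in Sm_S$ with $S$ as in the hypotheses, the scheme $E\setminus 0$ represents the unimodular vectors of $E$ and is therefore the associated bundle $\mathrm{Fr}(E)\times^{GL_n}(\mathbb{A}^n\setminus 0)$ of the frame $GL_n$-torsor $\mathrm{Fr}(E)$ of $E$. By the affine representability results of \cite{AHW18}, over such a base $GL_n$-torsors on smooth affine schemes are classified by a space $BGL_n$ which is $\mathbb{A}^1$-naive, and the inclusion $GL_{n-1}\into GL_n$ gives an $\mathbb{A}^1$-fibre sequence $\mathbb{A}^n\setminus 0\simeq GL_n/GL_{n-1}\to BGL_{n-1}\to BGL_n$. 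Hence, if $\alpha_E\colon X\to BGL_n$ classifies $E$, the square with vertices $E\setminus 0$, $X$, $BGL_{n-1}$, $BGL_n$ is $\mathbb{A}^1$-homotopy cartesian, while $\alpha_{f^*E}$ is $\mathbb{A}^1$-homotopic to $\alpha_E\circ f$. As $f$ is an $\mathbb{A}^1$-equivalence and homotopy pullbacks preserve $\mathbb{A}^1$-equivalences, the induced map $(f^*E)\setminus 0\to E\setminus 0$ is an $\mathbb{A}^1$-equivalence, completing the proof.

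The main obstacle is exactly the map $(f^*E)\setminus 0\to E\setminus 0$: one cannot argue Zariski- or Nisnevich-locally on $X$, because an $\mathbb{A}^1$-equivalence need not restrict to an $\mathbb{A}^1$-equivalence over an open subscheme, so trivializing $E$ is of no direct help. The only available leverage is to exhibit $E\setminus 0\to X$ as the homotopy pullback of one fixed $\mathbb{A}^1$-fibration, and it is this that forces both the reduction to affine schemes and the appeal to the affine representability theorems of \cite{AHW18}; this is also where the running hypotheses on $S$, and the technical assumption that $X$ carries a rational point (allowing one to work with based fibre sequences), are used.
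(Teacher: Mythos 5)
Your proof follows essentially the same strategy as the paper's: decompose $Th(E)$ via the cofibration sequence $E\setminus 0\to E\to Th(E)$, dispose of the middle term by two-out-of-three, and reduce everything to showing that $f^*E\setminus 0\to E\setminus 0$ is an $\mathbb{A}^1$-equivalence, which in both cases is done by exhibiting $E\setminus 0\to X$ as a pullback of a universal $\mathbb{A}^1$-local fibration coming from the affine-representability results of \cite{AHW18}. The paper packages the last step as Lemma~\ref{lem:thom-iso}: it identifies $E\setminus 0\to X$ with the twisted bundle $E_{Nis}GL_n\times_\sigma(\mathbb{A}^n-0)\to B_{Nis}GL_n$ pulled back along a classifying map $X\to B_{Nis}GL_n$, and cites \cite[Thm.~2.2.5]{AHW18} together with \cite[Prop.~2.3]{We11} to conclude $\mathbb{A}^1$-locality. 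You reach for the fibration $BGL_{n-1}\to BGL_n$ instead, which has fiber $\mathbb{A}^1$-equivalent (though not equal) to $\mathbb{A}^n\setminus 0$; that is a workable variant, but it requires the additional bookkeeping that the homotopy pullback along $\alpha_E$ is only $\mathbb{A}^1$-equivalent to $E\setminus 0$ (a choice of complementary summand to the tautological line bundle is an affine-space torsor over $E\setminus 0$, and one needs $X$ affine for this torsor to have sections).

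Two points deserve flagging. First, the Jouanolou reduction as written has a gap: $\widetilde Y:=Y\times_X\widetilde X$ is affine over $Y$ but need not be affine, unless $Y$ itself is affine. This is repairable — apply Jouanolou to $Y\times_X\widetilde X$ again, or to $Y$ separately and form a common refinement — but the statement as given is not correct. Second, and more interestingly, the paper avoids the Jouanolou reduction altogether. The classifying map $X\to B_{Nis}GL_n$ in Lemma~\ref{lem:thom-iso} is constructed in the simplicial Nisnevich-local homotopy category (using \cite[Prop.~4.1.15]{MV99}), where $Vect_n(X)\cong\mathrm{Hom}_{H^s(S)}(X,BGL_n)$ holds for \emph{all} smooth $X$, not just affine ones; the affine-representability input of \cite{AHW18} is invoked only for the universal fibration $G\to E_{Nis}G\to B_{Nis}G$. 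Only when you insist on an $\mathbb{A}^1$-classifying map do you need $X$ affine, which is what forces your reduction. Switching to the simplicial classification, as the paper does, would make the Jouanolou step unnecessary and sidestep the affineness issue entirely.
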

	To prove this we use the following lemma.
	\begin{lemma}\label{lem:thom-iso}
		Let $E\to B$ be a principal $GL_n$-bundle over $S$. Given a point $b:S\to B$, the diagram 
		\[GL_n\to E\to B \]
		coming from the pullback
		\[ 
		\begin{tikzcd}
		GL_n\arrow[r]\arrow[d] &E\arrow[d]\\
		S\arrow[r,"b"] &B
		\end{tikzcd}
		\]
		is an $\mathbb{A}^1$-local fiber sequence. Furthermore, for any scheme $F$ with a $GL_n$-action $\sigma:GL_n\times F\to F$ the induced diagram
		\[F\to E\times_\sigma F\to B \]
		is an $\mathbb{A}^1$-local fiber sequence.
	\end{lemma}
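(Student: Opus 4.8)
The plan is to recast both assertions as the statement that suitable commutative squares of motivic spaces are $\mathbb{A}^1$-homotopy cartesian, and to prove this in two stages: first Nisnevich-locally, where it is essentially formal, and then to upgrade to the $\mathbb{A}^1$-local homotopy category by means of a geometric model of $B_{Nis}GL_n$. The first step is to observe that $GL_n$ is a \emph{special} group in the sense of Serre, so that every $GL_n$-torsor is already Zariski-locally trivial. Hence $E\to B$ is Zariski-locally a product projection $GL_n\times U\to U$, and for any $GL_n$-scheme $F$ the associated bundle $E\times_\sigma F\to B$ is Zariski-locally the projection $F\times U\to U$. Since being homotopy cartesian is a local condition, it follows at once that the diagrams $GL_n\to E\to B$ and $F\to E\times_\sigma F\to B$ are homotopy fiber sequences in $H^s(Sm_S)$; equivalently, restricting along the chosen point $b\colon S\to B$ (over which the torsor is trivialised, so that the fibres are literally $GL_n$ and $F$) yields simplicially homotopy cartesian squares. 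It remains only to see that these squares stay homotopy cartesian after $\mathbb{A}^1$-localisation.

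For the first assertion I would use the infinite Stiefel ind-scheme $V_n(\mathbb{A}^\infty)=\colim_N V_n(\mathbb{A}^N)$ of $n\times N$ matrices of full rank $n$, with its free $GL_n$-action and quotient $Gr_n(\mathbb{A}^\infty)=\colim_N Gr(n,N)$, a model of $B_{Nis}GL_n$. The usual shift-and-linear-interpolation argument shows $V_n(\mathbb{A}^\infty)$ is $\mathbb{A}^1$-contractible (cf. \cite[Sec.~4]{MV99}); together with the $\mathbb{A}^1$-naivety of $B_{Nis}GL_n$ from \cite{AHW18} this identifies $GL_n\to V_n(\mathbb{A}^\infty)\to Gr_n(\mathbb{A}^\infty)$ as an $\mathbb{A}^1$-fiber sequence. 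The motivic model structure is right proper (\cite{MV99}), so $\mathbb{A}^1$-homotopy cartesian squares are stable under arbitrary base change; alternatively one argues via the pasting law for homotopy pullbacks together with the fact that the classifying map of a trivial torsor is $\mathbb{A}^1$-nullhomotopic. For $B$ quasi-compact the torsor $E\to B$ is classified by a morphism $c\colon B\to Gr_n(\mathbb{A}^\infty)$, well defined up to simplicial (a fortiori $\mathbb{A}^1$-) homotopy, with $E\cong c^{*}V_n(\mathbb{A}^\infty)$; base-changing the universal $\mathbb{A}^1$-fiber sequence along $c$ and then along $b$ gives the first assertion.

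For the associated-bundle statement I would run the same argument with the universal associated bundle $V_n(\mathbb{A}^\infty)\times_{GL_n}F\to Gr_n(\mathbb{A}^\infty)$ in place of $V_n(\mathbb{A}^\infty)\to Gr_n(\mathbb{A}^\infty)$. Its pullback along $V_n(\mathbb{A}^\infty)\to Gr_n(\mathbb{A}^\infty)$ is the trivial bundle $V_n(\mathbb{A}^\infty)\times F\to V_n(\mathbb{A}^\infty)$ — here one uses the identity $V_n(\mathbb{A}^\infty)\times_{Gr_n(\mathbb{A}^\infty)}V_n(\mathbb{A}^\infty)\cong V_n(\mathbb{A}^\infty)\times GL_n$ valid for any $GL_n$-torsor — and the trivial bundle is a split $\mathbb{A}^1$-fiber sequence with fibre $F$. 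Descending this along the $\mathbb{A}^1$-covering $V_n(\mathbb{A}^\infty)\to Gr_n(\mathbb{A}^\infty)$ (equivalently, combining it with the $\mathbb{A}^1$-fiber sequence $GL_n\to V_n(\mathbb{A}^\infty)\times F\to V_n(\mathbb{A}^\infty)\times_{GL_n}F$ obtained by base-change from the previous paragraph, and using $V_n(\mathbb{A}^\infty)\times F\simeq_{\mathbb{A}^1}F$) shows that $F\to V_n(\mathbb{A}^\infty)\times_{GL_n}F\to Gr_n(\mathbb{A}^\infty)$ is an $\mathbb{A}^1$-fiber sequence. Base change along the classifying map of $E$ and along $b$ then completes the proof.

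The one genuinely non-formal point is the upgrade carried out in the second paragraph: because $L_{\mathbb{A}^1}$ is not left exact, the passage from the Nisnevich-local fiber sequence to the $\mathbb{A}^1$-local one is not automatic and relies on the special geometry of $GL_n$ — precisely the $\mathbb{A}^1$-contractibility of the infinite Stiefel variety and the $\mathbb{A}^1$-naivety of $B_{Nis}GL_n$. The remaining ingredients — Zariski-local triviality of $GL_n$-torsors, stability of $\mathbb{A}^1$-homotopy cartesian squares under base change, and the pullback identity for associated bundles — are routine, so I expect essentially all of the work to be concentrated there.
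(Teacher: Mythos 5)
Your proposal follows the same two-stage strategy as the paper: first establish the sequence as a simplicial (Nisnevich-local) fiber sequence using Zariski-local triviality of $GL_n$-torsors, then upgrade to an $\mathbb{A}^1$-local fiber sequence using the affine representability and $\mathbb{A}^1$-naivety results of \cite{AHW18}. The principal difference is packaging: you argue via the explicit geometric model $V_n(\mathbb{A}^\infty)\to Gr_n(\mathbb{A}^\infty)$, whereas the paper invokes \cite[Thm.~2.2.5]{AHW18} directly for the universal $\mathbb{A}^1$-fiber sequence $GL_n\to E_{Nis}GL_n\to B_{Nis}GL_n$, and then \cite[Prop.~2.3]{We11} and \cite[Prop.~5.1]{We11} respectively for the base change and the associated-bundle versions. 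These two routes are essentially equivalent once one knows (from the contractibility of the Stiefel ind-scheme and AHW) that the Grassmannian models $B_{Nis}GL_n$ in the relevant sense, so this is a legitimate variant.

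There is, however, a genuine gap in your justification of the base change step. You appeal to right properness of the motivic model structure to conclude that $\mathbb{A}^1$-homotopy cartesian squares are stable under arbitrary base change. Right properness only guarantees that pulling back along \emph{$\mathbb{A}^1$-local fibrations} preserves $\mathbb{A}^1$-weak equivalences, so it gives homotopy cartesianness of an on-the-nose pullback square only when the map being pulled back is an $\mathbb{A}^1$-local fibration. The projection $V_n(\mathbb{A}^\infty)\to Gr_n(\mathbb{A}^\infty)$ (like any $GL_n$-torsor of schemes) is a Nisnevich-local fibration after sheafification, but it is \emph{not} a priori a fibration in the $\mathbb{A}^1$-local structure: $\mathbb{A}^1$-localization is a left Bousfield localization and has strictly fewer fibrations, and the discrepancy is exactly where the nontrivial content of the lemma lies. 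Your alternative suggestion via the pasting lemma runs into the same obstruction — pasting lets you deduce that the square over $S$ is $\mathbb{A}^1$-homotopy cartesian \emph{only after} the square over $B$ (the pullback defining $E$) is known to be, which is what needs proving. The paper fills this gap by invoking Wendt's Proposition~2.3 (and Proposition~5.1 for the associated-bundle case), which establishes that such base changes of $\mathbb{A}^1$-local fiber sequences remain $\mathbb{A}^1$-local under the hypothesis of Nisnevich-local triviality. With that input in place of the right-properness appeal, the remainder of your argument goes through.
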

	\begin{proof}
		Note that for any locally trivial bundle $P\to X$ with fiber $F$ and any point $x$ in $X$ the pullback diagram
		\[F\to P\to X\]
		is a simplicial fiber sequence (taking stalks gives a fiber sequence of simplicial sets). For any smooth $S$-scheme $B$ there is a sequence of bijections
		\[Vect_n(B)\cong P_{Nis}(B,GL_n)\cong P_{Nis}(B,GL_n)\cong Hom_{H^s(S)}(B,BGL_n) \] 
		where, $P_\tau(B,GL_n)$ is the set of $\tau$-locally trivial $GL_n$ bundles, by \cite[Ex.2.3.4]{AHW18} and \cite[Prop.4.1.15]{MV99}. This implies that the map $E\to B$ is a pullback of the $GL_n$-bundle $E_{Nis}GL_n\to B_{Nis}GL_n$ where $B_{Nis}GL_n$ is a Nisnevich fibrant replacement. As every vector bundle (and hence every $GL_n$-torsor) is Zariski locally trivial, $BGL_n$ satisfies Nisnevich descent and hence we have a bijection  
		\[\pi_0(B_{Nis}GL_n(X))\cong\pi_0(BGL_n(X)) \]
		for any $X\in Sm_S$. By \cite[Thm.~5.2.3]{AHW18}, the set of rank $n$ vector bundles $Vect_n(-)$ is $\mathbb{A}^1$-invariant for affine schemes over $S$ and hence we have
		\[Vect_n(X)\cong \pi_0(B_{Nis}GL_n(X))\cong\pi_0(BGL_n(X))\]
		for $X$ affine. By \cite[Thm.~2.2.5]{AHW18}
		\[G\to E_{Nis}G\to B_{Nis}G\]
		is an $\mathbb{A}^1$-local fiber sequence hence by \cite[Prop.~2.3]{We11}
		\[GL_n\to E\to B\]
		is an $\mathbb{A}^1$-local fiber sequence. For any scheme $F$ with a $GL_n$-action we can show that 
		\[F\to E_{Nis}(GL_n)\times_\sigma F\to B_{Nis}(GL_n) \]
		is an $\mathbb{A}^1$-local fiber sequence along the lines of \cite[Prop.~5.1]{We11}. The simplicial fiber sequence
		\[F\to E\times_\sigma F\to B \]
		is a pullback of the universal sequence and hence is also $\mathbb{A}^1$-local. 
	\end{proof}
	\begin{proof}[Proof of Theorem~\ref{thm:thom-iso}]
		Given any vector bundle $E\to X$ of rank $n$, the 2 out of 3 property implies that an $\mathbb{A}^1$-weak equivalence $f:Y\to X$ induces an $\mathbb{A}^1$-weak equivalence $f^*E\to E$.  The complement of the zero section $E-X\to X$ is a locally trivial bundle with fiber $\mathbb{A}^n-0$. The fiber sequence
		\[\mathbb{A}^n-0\to E-X\to X \]
		is obtained by twisting the $GL_n$-torsor associated to the vector bundle $E\to X$ by the standard $GL_n$-action on $\mathbb{A}^n-0$ and is hence an $\mathbb{A}^1$-local fiber sequence by Lemma~\ref{lem:thom-iso}. 
                Thus the pullback of $E-X$ along an $\mathbb{A}^1$-equivalence $f:Y\to X$ induces an $\mathbb{A}^1$-equivalence $f^*E-Y\to E-X$. 
                We therefore have an equivalence of cofibration sequences 
		\[
		\begin{tikzcd}
		f^*E-Y\arrow[r]\arrow[d,"\sim"] &f^*E\arrow[d,"\sim"]\arrow[r] &Th(f^*E)\arrow[d,"\sim"]\\
		E-X\arrow[r] &E\arrow[r] &Th(E)
		\end{tikzcd}
		\]
		giving the desired $\mathbb{A}^1$-equivalence of Thom spaces.
	\end{proof}
	\end{appendices}
	\bibliography{bibliography1}
	\bibliographystyle{alphaurl}
\end{document}